\numberwithin{equation}{section}
\newtheorem{Theorem}{Theorem}[section]
\newtheorem{Corollary}[Theorem]{Corollary}
\newtheorem{Lemma}[Theorem]{Lemma}
\newtheorem{Proposition}[Theorem]{Proposition}
 { \theoremstyle{definition}
\newtheorem{Definition}[Theorem]{Definition}

\newtheorem{Remark}[Theorem]{Remark} }
\begin{document}

\newcommand{\arXivNumber}{1912.06488}

\renewcommand{\PaperNumber}{031}

\FirstPageHeading

\ShortArticleName{Representations of the Lie Superalgebra $\mathfrak{osp}(1|2n)$ with Polynomial Bases}

\ArticleName{Representations of the Lie Superalgebra $\boldsymbol{\mathfrak{osp}(1|2n)}$\\ with Polynomial Bases}

\Author{Asmus K.~BISBO~$^{\rm a}$, Hendrik DE BIE~$^{\rm b}$ and Joris VAN DER JEUGT~$^{\rm a}$}

\AuthorNameForHeading{A.K.~Bisbo, H.~De Bie and J.~Van der Jeugt}

\Address{$^{\rm a)}$~Department of Applied Mathematics, Computer Science and Statistics, Ghent University,\\
\hphantom{$^{\rm a)}$}~Krijgslaan 281-S9, B-9000 Gent, Belgium}
\EmailD{\href{mailto:Asmus.Bisbo@UGent.be}{Asmus.Bisbo@UGent.be}, \href{mailto:Joris.VanderJeugt@UGent.be}{Joris.VanderJeugt@UGent.be}}

\Address{$^{\rm b)}$~Department of Electronics and Information Systems, Faculty of Engineering and Architecture,\\
\hphantom{$^{\rm b)}$}~Ghent University, Krijgslaan 281-S8, B-9000 Gent, Belgium}
\EmailD{\href{mailto:Hendrik.DeBie@Ugent.be}{Hendrik.DeBie@Ugent.be}}

\ArticleDates{Received June 30, 2020, in final form March 10, 2021; Published online March 25, 2021}

\Abstract{We study a particular class of infinite-dimensional representations of~$\mathfrak{osp}(1|2n)$. These representations $L_n(p)$ are characterized by a positive integer $p$, and are the lowest component in the $p$-fold tensor product of the metaplectic representation of~$\mathfrak{osp}(1|2n)$. We~construct a new polynomial basis for~$L_n(p)$ arising from the embedding $\mathfrak{osp}(1|2np) \supset \mathfrak{osp}(1|2n)$. The basis vectors of~$L_n(p)$ are labelled by semi-standard Young tableaux, and are expressed as Clifford algebra valued polynomials with integer coefficients in $np$ variables. Using combinatorial properties of these tableau vectors it is deduced that they form indeed a basis. The~computation of matrix elements of a set of generators of~$\mathfrak{osp}(1|2n)$ on these basis vectors requires further combinatorics, such as the action of a Young subgroup on the horizontal strips of the tableau.}

\Keywords{representation theory; Lie superalgebras; Young tableaux; Clifford analysis; para\-bosons}

\Classification{17B10; 05E10; 81R05; 15A66}

\section{Introduction}\label{sec_introduction}

In representation theory of Lie algebras, Lie superalgebras or their deformations, there are often three problems to be tackled.
The first is the existence or the classification of representations.
The second is obtaining character formulas for representations.
And the third is the construction of (a class of) representations.
Concretely, this third step consists of finding an explicit basis for the representation space and the explicit action of a set of algebra generators in this basis (i.e., find all matrix elements).
Mathematicians are primarily interested in the first two problems and often ignore the third one.
This goes together with the impression that the third problem is computationally quite hard and not necessarily leads to interesting mathematical structures.
For~applications in physics however, the third step is often indispensable, as one needs to compute physical quantities such as energy spectra coming from eigenvalues of Hamiltonians, or transition matrix elements coming from explicit actions, e.g.,~\cite{Iachello-VanIsacker-1991}. A~typical example of the third problem is the construction of the Gelfand--Zetlin basis for finite-dimensional irreducible representations of the Lie algebra $\mathfrak{gl}(n)$ or $\mathfrak{sl}(n)$, see, e.g.,~\cite{Molev-2006}.

In this paper we are dealing with a class of representations of the orthosymplectic Lie super\-algebra $\mathfrak{osp}(1|2n)$.
The representations considered here are infinite-dimensional irreducible lowest weight representations, with lowest weight coordinates
$\big(\frac{p}{2}, \frac{p}{2}, \ldots, \frac{p}{2}\big)$, for~$p$ a positive inte\-ger~\cite{Blank-Havlicek-1986, Dobrev-Salom-2017, Dobrev-Zhang-2005}. As~we shall see in the paper, the construction of basis vectors and generator actions leads to many interesting mathematical and combinatorial concepts.

The Lie superalgebra $\mathfrak{osp}(1\vert 2 n)$ plays an important role in mathematical physics.
For $n=2$, this superalgebra describes the supersymmetric extension in $D=4$ anti-de Sitter space (\cite{Deser-Zumino-1977},~see also~\cite{Ivanov-Sorin-1980} and references therein).
Supersymmetric higher spin extensions of the anti-de Sitter algebra in four dimensions are realized in other orthosymplectic superalgebras, including $\mathfrak{osp}(1\vert 2 n)$ and~$\mathfrak{osp}(m\vert 2 n)$~\cite{Sezgin-Sundell-2013, Vasiliev-2002}.
Our own motivation for studying the class of~$\mathfrak{osp}(1\vert 2 n)$ representations mentioned in the previous paragraph also comes from physics:
this class of representations corresponds to the so-called paraboson Fock spaces.
Parabosons were introduced by Green~\cite{Green-1953} in~1953, as generalizations of bosons.
Parabosons have been of interest in quantum field theory~\cite{Ohnuki-Kamefuchi-1982},
in generalizations of quantum statistics~\cite{Chaturvedi-1996, Greenberg-Messaih-1965} and in Wigner quantum systems~\cite{King-Palev-Stoilova-VanderJeugt-2003,Lievens-Stoilova-VanderJeugt-2006}.
Whereas creation and annihilation operators of bosons satisfy simple commutation relations, those of parabosons satisfy more complicated triple relations.
Moreover where there is only one boson Fock space, there are an infinite number of paraboson Fock spaces, each of them characterized by a positive integer $p$
(called the order of statistics).
Many years after their introduction, it was shown that the triple relations for~$n$ pairs of paraboson operators are in fact defining relations for the Lie superalgebra $\mathfrak{osp}(1|2n)$~\cite{Ganchev-Palev-1980}
and that the paraboson Fock space of order $p$ coincides with the unitary irreducible lowest weight representation~$L_n(p)$ of~$\mathfrak{osp}(1|2n)$ with lowest weight
$\big(\frac{p}{2}, \frac{p}{2}, \ldots, \frac{p}{2}\big)$ in the natural basis of the weight space.

The construction of a convenient basis for~$L_n(p)$, with the explicit action of the paraboson operators, turns out to be a difficult problem. In~principle, one can follow Green's approach \cite{Green-1953, Kanakoglou-Daskaloyannis-2007} and identify $L_n(p)$ as an irreducible component in the $p$-fold tensor product
of the boson Fock space of~$\mathfrak{osp}(1|2n)$ (which is $L_n(1)$).
However, there are computational difficulties to finding a~proper basis this way.

Some years ago~\cite{Lievens-Stoilova-VanderJeugt-2008}, a solution was found for the construction of a basis for~$L_n(p)$, \mbox{using} in particular the embedding $\mathfrak{osp}(1|2n)\supset\mathfrak{gl(n)}$.
This allowed the construction of a proper Gelfand--Zetlin basis for~$L_n(p)$, and using further group theoretical techniques the actions of~the
paraboson operators in this basis could be computed~\cite{Lievens-Stoilova-VanderJeugt-2008}.
This offered a solution to a~long-standing problem. A~disadvantage of this solution and the Gelfand--Zetlin basis is the rather complicated expressions for the generator matrix elements, involving square roots and elaborate Clebsch–Gordan coefficients. For the basis constructed in this paper, all generator matrix elements are integers, calculated through combinatorial considerations.
Moreover, the individual basis elements of our construction are described by concrete polynomial expressions.
This is another difference with the Gelfand--Zetlin basis, which is essentially just a labelling of~basis vectors without a concrete functional description.

In the present paper, we construct a new polynomial basis for these representations $L_n(p)$.
Starting from the embedding $\mathfrak{osp}(1|2np) \supset \mathfrak{osp}(1|2n)$, $L_n(p)$ can be identified with a sub\-mo\-dule
of the decomposition of the $\mathfrak{osp}(1|2np)$ Fock space $L_{np}(1)$.
Equivalently, the Howe dual pair $(\mathfrak{osp}(1|2n), \mathop{\rm Pin}(p))$ for this Fock space can be employed.
Using furthermore the character for~$L_n(p)$, this yields a polynomial basis consisting of vectors $\omega_A(p)$, labelled by semi-standard Young tableaux $A$ of length at most $p$ with entries $1,2,\ldots,n$.
The construction of this basis is carefully developed in Sections~\ref{sec2}--\ref{sec5}.
Each basis vector $\omega_A(p)$ is a specific Clifford algebra valued polynomial with integer coefficients in $np$ variables $x_{i,\alpha}$ ($i=1,\ldots,n$; $\alpha=1,\ldots,p$), thus invol\-ving $p$ Clifford elements $e_\alpha$.
The $\mathfrak{osp}(1|2n)$ generators $X_i$ and~$D_i$ are realized as (Clifford algebra valued) multiplication operators or differentiation operators with respect to the $x_{i,\alpha}$'s.
These are given in Section~\ref{sec2}, where the definition of~$\mathfrak{osp}(1|2n)$ is recalled and~$L_n(p)$ is defined. In~Section~\ref{sec3} a module $\overline{V}_n(p)$ is constructed, which is a natural induced $\mathfrak{osp}(1|2n)$ module. A~useful homomorphism $\Psi_p\colon\overline{V}_n(p)\to L_n(p)$ is considered, yielding the conclusion that~$L_n(p)$ is a~quotient module of~$\overline{V}_n(p)$. In~Section~\ref{sec4} we introduce the above mentioned tableau vectors~$\omega_A(p)$ as candidate basis vectors for~$L_n(p)$.
The same is done for tableau vectors~$v_A(p)$ of~the module~$\overline{V}_n(p)$. In~this section, the knowledge of the characters of~$L_n(p)$ and~$\overline{V}_n(p)$ in~terms of Schur polynomials plays an essential role. In~order to show that the tableau vectors~$\omega_A(p)$ constitute indeed a basis for~$L_n(p)$, their linear independence must be shown,
and this is established in~Section~\ref{sec5}.
The~proof is non-trivial and depends on a total ordering for the set of semistandard Young tableaux with entries from $\{1,2,\ldots,n\}$.
This allows the identification of a unique ``leading term'' in $\omega_A(p)$ which does not appear in $\omega_B(p)$ if $B<A$. In~Section~\ref{sec6} we compute the action of the $\mathfrak{osp}(1|2n)$ generators $X_i$ and~$D_i$ on the tableau vectors $\omega_A(p)$ of~$L_n(p)$.
This is rather technical: rewriting $X_i\omega_A(p)$ or $D_i \omega_A(p)$ as linear combinations of tableau vectors $\omega_B(p)$ is not trivial.
Fortunately, also here the identification of ``leading terms'' is very helpful to solve the problem. We~also give some examples, making the technical parts comprehensible. Finally, in Section~\ref{sec7} we consider as an example the case $n=2$ in detail.

To improve the readability of the paper, we make a list of the commonly used symbols. Here $n,p\in\mathbb{N}$ and~$i\in\{1,\dots,n\}$.
\begin{gather*}
\begin{split}
\mathcal{H}_n(p)&\ \text{ Irreducible lowest weight $\mathfrak{osp}(1|2n)$-module with lowest weight $\big(\tfrac{p}{2},\ldots,\tfrac{p}{2}\big)$}\\
\mathcal{C}\ell_p&\ \text{ Complex Clifford algebra with $p$ positive signature generators, $e_\alpha$}\\
\mathbb{C}[\mathbb{R}^{np}]&\ \text{ Space of polynomials in $n p$ variables, $x_{i,\alpha}$, with complex coefficients}\\
\mathcal{A}&\ \text{ Space of Clifford algebra valued polynomials, $\mathcal{A}:=\mathbb{C}[\mathbb{R}^{np}]\otimes \mathcal{C}\ell_p$}\\
L_n(p)&\ \text{ Realization of~$\mathcal{H}_n(p)$ in $\mathcal{A}$ generated by the $\mathfrak{osp}(1|2n)$-action on~$1\in \mathcal{A}$}\\
X_i,D_i&\ \text{ Generators of the $\mathfrak{osp}(1|2n)$-action on~$L_n(p)$ and~$\mathcal{A}$}\\
\overline{V}_n(p)&\ \text{ Induced $\mathfrak{osp}(1|2n)$-module with lowest weight vector $|0\rangle$ of weight $\big(\tfrac{p}{2},\ldots,\tfrac{p}{2}\big)$}\\
B_i^+,B_i^-&\ \text{ Generators of the $\mathfrak{osp}(1|2n)$-action on~$\overline{V}_n(p)$}\\
M_n(p)&\ \text{ Maximal non-trivial $\mathfrak{osp}(1|2n)$-submodule of~$\overline{V}_n(p)$}\\
V_n(p)&\ \text{ Realization of~$\mathcal{H}_n(p)$ as the quotient module $V_n(p):=\overline{V}_n(p)/M_n(p)$}\\
\Psi_p&\ \text{ The map $\overline{V}_n(p)\to L_n(p)$ mapping $B_{i_1}^+\cdots B_{i_k}^+|0\rangle \mapsto X_{i_1}\cdots X_{i_k}(1)$}\\
\mathbb{Y}_n&\ \text{ Set of semistandard Young tableaux with entries in $\{1,\dots,n\}$}\\
\mathbb{Y}_n(p)&\ \text{ Set of semistandard Young tableaux in $\mathbb{Y}_n$ with at most $p$ rows}\\
\mathbb{T}(\lambda,p)&\ \text{ Set of column distinct Young tableaux of shape $\lambda$ with entries in $\{1,\dots,p\}$}\\
\mathbb{E}(\lambda,p)&\ \text{ Set of Young tableaux of shape $\lambda$ with entries in $\{1,\dots,p\}$}\\
\lambda_A, \mu_A&\ \text{ Respectively the shape and weight of a tableau $A\in\mathbb{Y}_n$}\\
\omega_A(p)&\ \text{ Basis vector of~$L_n(p)$ corresponding to $A\in\mathbb{Y}_n(p)$}\\	
\tilde{\omega}_A(p)&\ \text{ Normalized vector propotional to $\omega_A(p)$}\\
v_A(p)&\ \text{ Basis vector of~$\overline{V}_n(p)$ corresponding to $A\in\mathbb{Y}_n$}\\
x_{A,T}&\ \text{ Monomial in $\mathbb{C}[\mathbb{R}^{np}]$ corresponding to $A\in\mathbb{Y}_n(p)$ and~$T\in\mathbb{E}(\lambda,p)$}\\
e_T&\ \text{ Element in $\mathcal{C}\ell_p$ corresponding to $T\in\mathbb{E}(\lambda,p)$}.\\
D_A&\ \text{ Young tableau of shape $\lambda_A$ with $k$'s in all entries of the $k$'th row, for all $k$}\\
c_A(\gamma)&\
\text{ Coefficient of the monomial $\textstyle{\prod_{i=1}^n\prod_{\alpha=1}^p (x_{i,\alpha}e_\alpha)^{\gamma_{i,\alpha}}}$ in the expansion of~$\tilde{\omega}_A(p)$}
	\end{split}
	\end{gather*}

\section[The polynomial paraboson Fock space Ln(p)]
{The polynomial paraboson Fock space $\boldsymbol{L_n(p)}$}\label{sec2}

The Lie superalgebra $\mathfrak{osp}(1|2n)$ is usually defined as a matrix Lie superalgebra \cite{Frappat-Sorba-Sciarrino-2000, Kac-1977}. It can also be defined as a symbolic algebra with generators and relations \cite{Ganchev-Palev-1980}.
Adopting the latter definition~$\mathfrak{osp}(1|2n)$ is generated by $2n$ odd elements $B^+_i$ and~$B^-_i$, for~$i\in\{1,\dots,n\}$, satisfying the structural relations
\begin{gather}
\label{sec2_eq_PCR}
\big[\big\{B_i^\xi,B_j^\eta\big\},B_l^\epsilon\big]=(\epsilon-\xi)\delta_{i,l}B_j^\eta
+(\epsilon-\eta)\delta_{j,l}B_i^\xi,
	\end{gather}
for~$i,j,l\in \{1,\dots,n\}$ and~$\eta,\epsilon,\xi\in\{+,-\}$, to be interpreted as $\pm 1$ in the algebraic relations. The brackets $[\cdot,\cdot]$ and~$\{\cdot,\cdot\}$ in~\eqref{sec2_eq_PCR} denote commutators and anti-commutators respectively. We~endow $\mathfrak{osp}(1|2n)$ with a unitary structure given by the anti-involution~$B_i^\pm\mapsto (B_i^\pm)^\dagger:=B_i^\mp$.

The Cartan subalgebra $\mathfrak{h}$ of~$\mathfrak{osp}(1|2n)$ has a basis consisting of the $n$ commuting elements
\begin{gather}
\label{sec2_eq_cartan}
h_i=\frac{1}{2}\big\{B^+_i,B^-_i\big\},
	\end{gather}
for~$i\in\{1,\dots,n\}$. Letting $\epsilon_i$, for~$i\in\{1,\dots,n\}$, be the corresponding dual basis for~$\mathfrak{h}^*$, we are able to define the lowest weight modules we are interested in.
Fixing $\{\epsilon_1,\dots,\epsilon_n\}$ as basis for~$\mathfrak{h}^*$, the notation~$(p_1,\dots,p_n)$ will from now on be used for the weight $\sum_{i=1}^n p_i\epsilon_i\in\mathfrak{h}^*$.

\begin{Definition}
Given $p\in\mathbb{N}$, let $\mathcal{H}_n(p)$ be the unitary irreducible lowest weight module of~$\mathfrak{osp}(1|2n)$ of lowest weight $\left(\frac{p}{2},\dots,\frac{p}{2}\right)$ and with lowest weight vector $|0\rangle$.
The actions of the generators $B^+_i$ and~$B^-_i$, for~$i\in\{1,\dots,n\}$, of~$\mathfrak{osp}(1|2n)$ on~$\mathcal{H}_n(p)$ are defined by relations
\begin{gather}
\label{sec2_eq_lw-action-on-vacuum}
B^-_i|0\rangle=0,
	\qquad\text{ and }\qquad
	\big\{B^+_i,B^-_j\big\}|0\rangle=p\delta_{i,j}|0\rangle,
	\end{gather}
for all $i,j\in \{1,\dots,n\}$.
	\end{Definition}
These modules were originally introduced in the context of parastatistics with $\mathcal{H}_n(p)$ as the Fock spaces of~$n$ parabosonic particles of order $p$. Here they are usually referred to as paraboson Fock spaces \cite{Green-1953, Greenberg-Messaih-1965}.
When $p=1$, the parabosonic particles revert to usual bosonic particles, and~$\mathcal{H}_n(1)$ becomes the usual boson Fock space.
From a different point of view $\mathcal{H}_n(1)$ is the Hilbert space of the quantum harmonic oscillator and is for that reason also referred to as the oscillator representation \cite{Nishiyama-1990}.

From now on we will consider $n$ and~$p$ to be fixed positive integers.
The treatment of the representation~$\mathcal{H}_n(p)$ will be carried out through a certain polynomial realization, the construction of which will take up the rest of this section.

The Clifford algebra $\mathcal{C}\ell_p$ is generated by $p$ elements $e_\alpha$, for~$\alpha\in\{1,\dots,p\}$, satisfying
\begin{gather*}
\{e_\alpha,e_\beta\}=2\delta_{\alpha,\beta},
	\end{gather*}
for all $\alpha,\beta\in\{1,\dots,p\}$.
Let $\mathbb{C}[\mathbb{R}^{np}]$ denote the space of polynomials in $np$ variables $x_{i,\alpha}$, for~$i\in\{1,\dots,n\}$ and~$\alpha\in\{1,\dots,p\}$, with complex coefficients, and define
\begin{gather*}
\mathcal{A}:=\mathbb{C}[\mathbb{R}^{np}]\otimes \mathcal{C}\ell_p
	\end{gather*}
to be its Clifford algebra valued counterpart. We~will in general suppress the tensor products when dealing with elements of~$\mathcal{A}$.
For each non-negative integer $n\times p$ matrix $\gamma\in M_{n,p}(\mathbb{N}_0)$ and each $\eta\in \mathbb{N}_0^p$ we define the following monomials
\begin{gather}
\label{sec2_eq_fundamental-monomials}
x^\gamma:=\prod_{i=1}^n\prod_{\alpha=1}^p x_{i,\alpha}^{\gamma_{i,\alpha}}\in \mathbb{C}[\mathbb{R}^{np}] \qquad\text{and}\qquad e^{\eta}:=e_1^{\eta_1}\cdots e_p^{\eta_p}\in \mathcal{C}\ell_p.
	\end{gather}
In the context of such monomials we will refer to $\gamma$ as the exponent matrix. We~note that $e^\eta= e^{\eta'}$ if and only if $\eta_\alpha \equiv \eta_\alpha'\mod 2$, for all $\alpha\in\{1,\dots,p\}$. As~a vector space $\mathcal{A}$ has a basis consisting of the vectors $x^\gamma e^\eta$, for~$\gamma\in M_{n,p}(\mathbb{N}_0)$ and~$\eta\in \mathbb{Z}_2^p$. The basis is orthonormal with respect to the following canonical inner product
\begin{gather}
\label{sec2_eq_inner-product}
\langle x^\gamma e^\eta,x^{\gamma'} e^{\eta'}\rangle := \delta_{\gamma,\gamma'}\delta_{\eta,\eta'},
	\end{gather}
for all $\gamma,\gamma'\in M_{n,p}(\mathbb{N}_0)$ and~$\eta,\eta'\in \mathbb{Z}_2^p$.

Denoting the $2np$ odd generators of~$\mathfrak{osp}(1|2n)n{np}$ by $B^{\pm}_{i,\alpha}$ we can consider $\mathbb{C}[\mathbb{R}^{np}]$ as an $\mathfrak{osp}(1|2n)n{np}$-module with action
\begin{gather*}
B^+_{i,\alpha}x^\gamma:= x_{i,\alpha}x^\gamma \qquad\text{and}\qquad B^-_{i,\alpha}x^\gamma := \partial_{i,\alpha}x^\gamma,
	\end{gather*}
for all $i\in\{1,\dots,n\}$, $\alpha\in\{1,\dots,p\}$ and~$\gamma\in M_{n,p}(\mathbb{N}_0)$, where $\partial_{i,\alpha}:=\frac{\partial}{\partial x_{i,\alpha}}$. The relations
\begin{gather*}
[\partial_{i,\alpha},x_{j,\beta}]=\delta_{i,j}\delta_{\alpha,\beta},
	\end{gather*}
for~$i,j\in\{1,\dots,n\}$ and~$\alpha,\beta\in\{1,\dots,p\}$, imply the structural relations~\eqref{sec2_eq_PCR} of~$\mathfrak{osp}(1|2n)n{np}$. In~fact $\mathbb{C}[\mathbb{R}^{np}]\cong \mathcal{H}_{np}(1)$ as $\mathfrak{osp}(1|2n)n{np}$ modules.
Similarly we can consider $\mathcal{A}$ as an $\mathfrak{osp}(1|2n)$-module by defining operators
\begin{gather}
\label{sec2_eq_operator-realization}
X_i:=\sum_{\alpha=1}^p x_{i,\alpha}e_\alpha
	\qquad\text{and}\qquad
D_i:=\sum_{\alpha=1}^p \partial_{i,\alpha} e_\alpha,
	\end{gather}
and letting
\begin{gather}
B_i^+(x^\gamma e^\eta):= X_i(x^\gamma e^\eta)
= \sum_{\alpha=1}^p x_{i,\alpha}x^\gamma e_\alpha e^\eta,
\\
B_i^-(x^\gamma e^\eta):= D_i(x^\gamma e^\eta)
= \sum_{\alpha=1}^p \partial_{i,\alpha}x^\gamma e_\alpha e^\eta,
\label{sec2_eq_action-X_i-D_i}
\end{gather}
for all $i\in\{1,\dots,n\}$, $\gamma\in M_{n,p}(\mathbb{N}_0)$ and~$\eta\in\mathbb{N}_0^p$.
It is easily verified that $\mathcal{A}$ is an $\mathfrak{osp}(1|2n)$-module by checking that the actions~\eqref{sec2_eq_action-X_i-D_i} satisfy the relations~\eqref{sec2_eq_PCR}. It can furthermore easily be verified that the inner product~\eqref{sec2_eq_inner-product} on~$\mathcal{A}$ respects the unitary structure of~$\mathfrak{osp}(1|2n)$, thus making $\mathcal{A}$ a unitary $\mathfrak{osp}(1|2n)$-module.

The subsequent result, Proposition~\ref{sec2_prop_irrep}, tells us that $\mathcal{A}$ has an irreducible submodule isomorphic to $\mathcal{H}_n(p)$.
The shape of the operators in~\eqref{sec2_eq_operator-realization} is that of the Green's ansatz \cite{Green-1953} with the internal components here being realized using Clifford algebra elements. This type of realization of the Green's ansatz dates back to \cite{Greenberg-Macrae-1983, Greenberg-Messaih-1965}.

The module $\mathcal{A}$ admits the Howe dual pair \cite{Cheng-Zhang-2004} $(\mathfrak{osp}(1|2n),G)$, where $G= \mathop{\rm Pin}(p)$ when $p$ is even and~$G=\mathop{\rm Spin}(p)$ when $p$ is odd. This gives a multiplicity free decomposition of~$\mathcal{A}$ into a~direct sum of irreducible modules of~$\mathfrak{osp}(1|2n)\times G$ of the form $\mathcal{H}_{\rm osp}(\mu)\otimes \mathcal{H}_G(\nu)$ \cite{Cheng-Kwon-Wang-2010,Cheng-Wang-2012, Salom-2013}. Here~$\mathcal{H}_{\rm osp}(\mu)$ denotes an irreducible lowest weight module of~$\mathfrak{osp}(1|2n)$ of lowest weight $\mu$, and~$\mathcal{H}_G(\nu)$ denotes an irreducible highest weight module of~$G$ of highest weight $\nu$.

We are interested in exactly one of the irreducible modules in this decomposition, namely the one with the constant polynomial $1\otimes I\in\mathcal{A}$ as lowest weight vector. Here $I\in\mathcal{C}\ell_p$ is the identity element in the Clifford algebra. We~shall from now on use the notation~$1:=1\otimes I\in \mathcal{A}$.

\begin{Definition}
Let $L_n(p)$ be the submodule of~$\mathcal{A}$ generated by the action of~$\mathfrak{osp}(1|2n)$ on~$1\in \mathcal{A}$.
	\end{Definition}

\begin{Proposition}\label{sec2_prop_irrep}
The module $L_n(p)$ is equivalent to the irreducible lowest weight $\mathfrak{osp}(1|2n)$-module $\mathcal{H}_n(p)$.
\end{Proposition}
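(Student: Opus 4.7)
The plan is to show that $L_n(p)$ is a cyclic unitary lowest weight module generated by a lowest weight vector of weight $(p/2,\ldots,p/2)$, and then invoke unitarity to conclude irreducibility and thereby isomorphism with $\mathcal{H}_n(p)$.

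First I would verify directly that $1\in\mathcal{A}$ is a lowest weight vector of weight $(p/2,\ldots,p/2)$ for the $\mathfrak{osp}(1|2n)$-action defined in \eqref{sec2_eq_action-X_i-D_i}. Since $\partial_{i,\alpha}(1)=0$ for every $i,\alpha$, we immediately get $B_i^-(1)=D_i(1)=0$. For the Cartan action, using \eqref{sec2_eq_cartan} together with $D_i(1)=0$,
\begin{gather*}
h_i(1)=\tfrac{1}{2}\big(X_iD_i+D_iX_i\big)(1)
=\tfrac{1}{2}D_iX_i(1)
=\tfrac{1}{2}\sum_{\alpha,\beta=1}^{p}\partial_{i,\beta}(x_{i,\alpha})\,e_\beta e_\alpha
=\tfrac{1}{2}\sum_{\alpha=1}^{p}e_\alpha^2=\tfrac{p}{2},
\end{gather*}
where the Clifford relation $e_\alpha^2=1$ is used in the final step. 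Hence $1$ has weight $(p/2,\ldots,p/2)$ and is annihilated by every lowering operator.

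Next, since $L_n(p)$ is by definition the $\mathfrak{osp}(1|2n)$-submodule of $\mathcal{A}$ generated by $1$, it is a cyclic lowest weight module of lowest weight $(p/2,\ldots,p/2)$, and it is unitary because it inherits the form $\langle\cdot,\cdot\rangle$ from \eqref{sec2_eq_inner-product}, which was already noted to respect the anti-involution $B_i^\pm\mapsto B_i^\mp$. Applying $X_i$'s successively to $1$ strictly raises the weight (each application adds $\epsilon_i$), so the $(p/2,\ldots,p/2)$-weight space of $L_n(p)$ is one-dimensional, spanned by $1$.

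The main remaining point is irreducibility. Suppose $W\subset L_n(p)$ were a proper nonzero $\mathfrak{osp}(1|2n)$-submodule. By unitarity, its orthogonal complement $W^\perp\cap L_n(p)$ is also a submodule, giving a decomposition $L_n(p)=W\oplus(W^\perp\cap L_n(p))$. The lowest weight vector $1$ then lies in exactly one of the two summands; whichever summand contains $1$ contains the cyclic generator and therefore equals all of $L_n(p)$, forcing the other summand to be zero. Hence $L_n(p)$ is irreducible.

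Finally, since $L_n(p)$ is an irreducible unitary lowest weight $\mathfrak{osp}(1|2n)$-module with lowest weight $(p/2,\ldots,p/2)$, the uniqueness (up to equivalence) of such a module yields $L_n(p)\cong\mathcal{H}_n(p)$. The only step that carries any subtlety is the Cartan computation above and the clean use of unitarity for irreducibility; everything else is bookkeeping that follows from the definitions already laid out in this section.
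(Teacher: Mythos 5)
Your proof is correct in outline but takes a genuinely different route from the paper. The paper disposes of irreducibility by invoking the Howe dual pair $(\mathfrak{osp}(1|2n),G)$: since $\mathcal{A}$ decomposes multiplicity-free into irreducible $\mathfrak{osp}(1|2n)\times G$ summands, the submodule generated by the lowest weight vector $1$ must be one of the irreducible constituents. You instead argue intrinsically from unitarity: the positive-definite form of \eqref{sec2_eq_inner-product} is contravariant for $B_i^\pm\mapsto B_i^\mp$, so a cyclic lowest weight module carrying such a form is irreducible. Your route is more self-contained (it does not rely on the cited Howe-duality decompositions), while the paper's is shorter because it outsources the hard work to the literature; both then identify $L_n(p)$ with $\mathcal{H}_n(p)$ by its lowest weight.

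Two steps in your unitarity argument are stated more strongly than you have justified. First, in an infinite-dimensional pre-Hilbert space a subspace and its orthogonal complement need not span, so $L_n(p)=W\oplus\big(W^\perp\cap L_n(p)\big)$ is not automatic; you need to work weight space by weight space, using that $\mathcal{A}$ (hence $L_n(p)$) has finite-dimensional weight spaces on which the form is positive definite, and that any submodule of a weight module with semisimple $\mathfrak{h}$-action is itself weight-graded. Second, ``$1$ lies in exactly one of the two summands'' is not true for an arbitrary direct sum decomposition of a vector containing $1=w+w'$; it follows only after you know both summands are weight-graded and that the $\big(\tfrac{p}{2},\dots,\tfrac{p}{2}\big)$-weight space is one-dimensional. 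For that one-dimensionality, ``applying $X_i$'s strictly raises the weight'' is not quite enough, since $L_n(p)$ is spanned by words in both the $X_i$ and the $D_i$ applied to $1$; you should also check, as the paper does, that $\tfrac12\{X_i,D_j\}(1)=\tfrac{p}{2}\delta_{i,j}(1)$ for $i\neq j$ as well (and $\{D_i,D_j\}(1)=0$), so that the full parabolic $\mathfrak{P}$ acts on $\mathbb{C}\cdot 1$ and $L_n(p)=U\big(\mathfrak{osp}(1|2n)^+\big)\cdot 1$. (Alternatively, the cleaner standard unitary argument avoids orthocomplements altogether: a nonzero proper submodule would contain a weight vector $w$ of minimal weight, necessarily singular and of weight different from the lowest one; contravariance then gives $w\perp U\big(\mathfrak{osp}(1|2n)^+\big)\cdot 1=L_n(p)$, so $\langle w,w\rangle=0$, contradicting positive definiteness.) With these points supplied, your alternative proof is complete.
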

\begin{proof}
In the module $\mathcal{A}$ the basis for the Cartan subalgebra of~$\mathfrak{osp}(1|2n)$, as described in~\eqref{sec2_eq_cartan}, is represented by operators
\begin{gather*}
h_i\mapsto \frac{1}{2}\{X_i,D_i\} =\frac{p}{2}+\sum_{\alpha=1}^p x_{i,\alpha}\partial_{i,\alpha},
	\end{gather*}
for all $i\in\{1,\dots,p\}$. Since
\begin{gather*}
D_i(1)=0\qquad\text{and}\qquad \frac{1}{2}\{X_i,D_j\}(1)=\frac{p}{2}\delta_{i,j}(1),
	\end{gather*}
for all $i,j\in \{1,\dots,n\}$, it follows that $1$ is a lowest weight vector of weight
$\left(\frac{p}{2},\dots,\frac{p}{2}\right)$. As~mentioned above, the module $\mathcal{A}$ decomposes into a direct sum of irreducible lowest weight modules of~$\mathfrak{osp}(1|2n)$ as a result of admitting the Howe dual pair $(\mathfrak{osp}(1|2n),G)$. It follows that $L_n(p)$ must be one of the components in the decomposition and hence be irreducible.
It can thus be concluded that $L_n(p)$ is equivalent to $\mathcal{H}_n(p)$.
\end{proof}
	
In the following sections we will construct a basis for~$L_n(p)$ together with formulas for the action of~$\mathfrak{osp}(1|2n)$ on the basis.

\section[The induced module Vn(p)]{The induced module $\boldsymbol{\overline{V}_n(p)}$}\label{sec3}
In this section we introduce the induced module $\overline{V}_n(p)$ which was used in \cite{Lievens-Stoilova-VanderJeugt-2008} to study the paraboson Fock space. We~relate this module to $L_n(p)$ via a surjective module homomorphism $\Psi_p$ and prove that the kernel of this map can be identified with the maximal non-trivial submodule of~$\overline{V}_n(p)$. The relationship between $\overline{V}_n(p)$ and~$L_n(p)$ described by $\Psi_p$ will be used throughout the following chapters and serves as a valuable tool in the proof of Theorem~\ref{sec5_theo_basis}.

Following \cite{Lievens-Stoilova-VanderJeugt-2008} we consider the parabolic subalgebra of~$\mathfrak{osp}(1|2n)$ given by
\begin{gather*}
\mathfrak{P}=\operatorname{span}\big\{\big\{B_i^+,B_j^-\big\},B_i^-,\{B_i^-,B_j^-\}\colon i,j\in\{1,\dots,n\}\big\}.
	\end{gather*}
Let $|0\rangle$ describe the lowest weight vector of the module $\mathcal{H}_n(p)$, then the action~\eqref{sec2_eq_lw-action-on-vacuum} of~$\mathfrak{P}$ on~$|0\rangle$ generates a one dimensional $\mathfrak{P}$-module denoted $\mathbb{C}|0\rangle$. As~in \cite{Lievens-Stoilova-VanderJeugt-2008} we let $\overline{V}_n(p)$ denote the induced module of~$\mathfrak{osp}(1|2n)$ relative to $\mathfrak{P}$, see \cite{Frappat-Sorba-Sciarrino-2000},
\begin{gather*}
\overline{V}_n(p):= \operatorname{Ind}_\mathfrak{P}^{\mathfrak{osp}(1|2n)} \mathbb{C}|0\rangle.
	\end{gather*}
We let $V_n(p)$ denote the irreducible module obtained by taking the quotient of~$\overline{V}_n(p)$ by its maximal non-trivial submodule $M_n(p)$,
\begin{gather*}
V_n(p):=\overline{V}_n(p)/M_n(p).
	\end{gather*}
By construction~$V_n(p)$ is then isomorphic to $\mathcal{H}_n(p)$ as an $\mathfrak{osp}(1|2n)$-module.

Let $\mathfrak{osp}(1|2n)^+$ denote the subalgebra of~$\mathfrak{osp}(1|2n)$ generated by $B_i^+$, for~$i\in\{1,\dots,n\}$,
\begin{gather*}
\mathfrak{osp}(1|2n)^+:=\operatorname{span}\big\{B_i^+,\big\{B_i^+,B_j^+\big\}\colon i,j\in\{1,\dots,n\}\big\}.
	\end{gather*}
As a vector space $\mathfrak{osp}(1|2n)=\mathfrak{osp}(1|2n)^+\oplus\mathfrak{P}$. The definition of the induced module $\overline{V}_n(p)$ then implies the following result that will be used in the proof of Theorem~\ref{sec5_theo_basis}.
\begin{Lemma}
\label{sec3_lemm_basis-positive-enveloping-algebra}
The map
\begin{gather*}
\Phi_p\colon \ U(\mathfrak{osp}(1|2n)^+)\rightarrow \overline{V}_n(p),\qquad
B\mapsto B|0\rangle,
	\end{gather*}
is an isomorphism of vector spaces.
	\end{Lemma}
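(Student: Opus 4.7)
The plan is to deduce the statement from the Poincar\'e--Birkhoff--Witt (PBW) theorem for Lie superalgebras, applied to the vector space decomposition $\mathfrak{osp}(1|2n)=\mathfrak{osp}(1|2n)^+\oplus\mathfrak{P}$ stated just before the lemma. Once one verifies that both summands are genuine Lie sub-superalgebras (which is routine from the defining relations~\eqref{sec2_eq_PCR}), super-PBW provides a linear isomorphism
\begin{gather*}
U(\mathfrak{osp}(1|2n)^+)\otimes_{\mathbb{C}} U(\mathfrak{P})\longrightarrow U(\mathfrak{osp}(1|2n))
\end{gather*}
given by multiplication. In particular, $U(\mathfrak{osp}(1|2n))$ is free as a right $U(\mathfrak{P})$-module, with basis any PBW basis of $U(\mathfrak{osp}(1|2n)^+)$.

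Next I would unwind the definition of the induced module and use associativity of the tensor product to obtain
\begin{gather*}
\overline{V}_n(p)=U(\mathfrak{osp}(1|2n))\otimes_{U(\mathfrak{P})}\mathbb{C}|0\rangle
\cong U(\mathfrak{osp}(1|2n)^+)\otimes_{\mathbb{C}}U(\mathfrak{P})\otimes_{U(\mathfrak{P})}\mathbb{C}|0\rangle
\cong U(\mathfrak{osp}(1|2n)^+)\otimes_{\mathbb{C}}\mathbb{C}|0\rangle,
\end{gather*}
and the last space is canonically identified with $U(\mathfrak{osp}(1|2n)^+)$ via $B\otimes|0\rangle\mapsto B$. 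Tracing the identifications, the inverse of this composite sends $B\in U(\mathfrak{osp}(1|2n)^+)$ to $B|0\rangle=\Phi_p(B)$, which yields the desired vector-space isomorphism.

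If a more hands-on argument is preferred, I would split into surjectivity and injectivity. For surjectivity, any element of $\overline{V}_n(p)$ has the form $Y|0\rangle$ with $Y\in U(\mathfrak{osp}(1|2n))$; by PBW one writes $Y=\sum_j B_j P_j$ with $B_j\in U(\mathfrak{osp}(1|2n)^+)$ and $P_j\in U(\mathfrak{P})$, and since $\mathbb{C}|0\rangle$ is one-dimensional as a $\mathfrak{P}$-module each $P_j|0\rangle=c_j|0\rangle$, so $Y|0\rangle=\Phi_p\bigl(\sum_j c_j B_j\bigr)$. Injectivity follows from the freeness established above: if $\Phi_p(B)=0$ then $B\otimes|0\rangle=0$ in the triple tensor product, which forces $B=0$. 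The main obstacle, minor as it is, lies in the preliminary check that $\mathfrak{P}$ and $\mathfrak{osp}(1|2n)^+$ are closed under the super-bracket so that super-PBW applies; everything thereafter is formal manipulation of tensor products and universal properties.
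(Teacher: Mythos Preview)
Your argument is correct and is precisely the standard justification the paper is invoking: the paper does not actually write out a proof of this lemma, but simply remarks that the vector space decomposition $\mathfrak{osp}(1|2n)=\mathfrak{osp}(1|2n)^+\oplus\mathfrak{P}$ together with the definition of the induced module implies the result. Your PBW-plus-tensor-product computation is exactly the routine unpacking of that remark, so there is no substantive difference in approach.
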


We relate the modules $\overline{V}_n(p)$ and~$L_n(p)$ by the module homomorphism
\begin{gather*}
\Psi_p\colon\ \overline{V}_n(p)\to L_n(p),
	\end{gather*}
for which $\Psi_p(|0\rangle):=1$. This means that $\Psi_p(B_i^+v)=X_i\Psi_p(v)$, and~$\Psi_p(B_i^-v)=D_i\Psi_p(v)$, for all $i\in\{1,\dots,n\}$ and~$v\in\overline{V}_n(p)$.
To continue, the following notation is needed.
For each $k\in\mathbb{N}$, consider the index set
\begin{gather*}
\mathcal{I}(k):= \big\lbrace (i_1,\dots,i_k)\in \{1,\dots,n\}^k\colon i_1<\cdots < i_k \big\rbrace.
	\end{gather*}
To each $I=(i_1,\dots,i_k)\in \mathcal{I}(k)$ we assign the following antisymmetric sums of operators acting on~$L_n(p)$ and~$\overline{V}_n(p)$ respectively.
\begin{gather}
\label{sec3_eq_polynomial-column-operator}
X_I:= \sum_{\sigma\in S_k} \operatorname{sgn}(\sigma)X_{i_{\sigma(1)}}\cdots X_{i_{\sigma(k)}}
	\end{gather}
and
\begin{gather}
\label{sec3_eq_column-operator}
B_I^+:=\sum_{\sigma\in S_k} \operatorname{sgn}(\sigma)B^+_{i_{\sigma(1)}}\cdots B^+_{i_{\sigma(k)}},
	\end{gather}
where $S_k$ denotes the $k$'th symmetric group. Whereas $B_I^+\neq 0$, for all $I\in \mathcal{I}(k)$ and~$k\in\mathbb{N}$, it~holds that $X_I=0$ if $k>p$ and~$X_I\neq 0$ if $k\leq p$. This is verified by a short calculation showing that
\begin{gather}
\label{sec3_eq_poly-column-operator-expansion}
X_I= k!\sum_{
\begin{subarray}{c}\alpha_1,\dots,\alpha_k\in \{1,\dots,p\},\\ \alpha_i\neq \alpha_j, \text{ when } i\neq j\end{subarray}}x_{i_1,\alpha_{1}}\cdots x_{i_k,\alpha_{k}}e_{\alpha_1}\cdots e_{\alpha_k},
	\end{gather}
where the sum is over all sets of~$k$ mutually distinct integers in $\{1,\dots,p\}$.

\begin{Lemma}
\label{sec3_lemm_kernel-equals-maximal-submodule}
The map $\Psi_p$ is a surjective $\mathfrak{osp}(1|2n)$-module homomorphism with kernel
\begin{gather*}
\ker \Psi_p = M_n(p)
	\end{gather*}
generated by the action of~$\mathfrak{osp}(1|2n)$ on the vectors $B_I^+|0\rangle$ with $I\in\mathcal{I}(p+1)$.
	\end{Lemma}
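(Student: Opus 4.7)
The plan is to prove the lemma in three stages, with the substantive work concentrated in the last stage.

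To see that $\Psi_p$ is a well-defined $\mathfrak{osp}(1|2n)$-module homomorphism, I appeal to the universal property of the induced module $\overline{V}_n(p) = \operatorname{Ind}_{\mathfrak{P}}^{\mathfrak{osp}(1|2n)} \mathbb{C}|0\rangle$: the assignment $|0\rangle \mapsto 1$ extends uniquely provided $1 \in L_n(p)$ satisfies the same $\mathfrak{P}$-relations as $|0\rangle$. These amount to $D_i(1)=0$ and $\frac{1}{2}\{X_i,D_j\}(1)=\frac{p}{2}\delta_{i,j}(1)$, which were already established in the proof of Proposition~\ref{sec2_prop_irrep}. Surjectivity is then immediate, since $L_n(p)$ is defined as the $\mathfrak{osp}(1|2n)$-submodule of $\mathcal{A}$ generated by $1 = \Psi_p(|0\rangle)$.

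For the identification $\ker\Psi_p = M_n(p)$, Proposition~\ref{sec2_prop_irrep} tells us that the image $L_n(p)\cong \mathcal{H}_n(p)$ is irreducible, so $\ker\Psi_p$ is a maximal proper submodule. On the other hand, $\overline{V}_n(p)$ is a cyclic lowest-weight module whose lowest weight space is one-dimensional, so every proper submodule must avoid $|0\rangle$ and the sum of all proper submodules is still proper. This shows $\overline{V}_n(p)$ admits a unique maximal proper submodule, which is $M_n(p)$ by definition, so $\ker\Psi_p = M_n(p)$.

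It remains to show that the $\mathfrak{osp}(1|2n)$-submodule $N$ generated by $\{B_I^+|0\rangle : I \in \mathcal{I}(p+1)\}$ equals $M_n(p)$. One inclusion is easy: formula~\eqref{sec3_eq_poly-column-operator-expansion} gives $X_I = 0$ whenever $|I| > p$, so $\Psi_p(B_I^+|0\rangle) = X_I(1) = 0$ and hence $N \subseteq \ker\Psi_p = M_n(p)$. The reverse inclusion $M_n(p) \subseteq N$ is the main obstacle. My plan is a character comparison: $\Psi_p$ factors through a surjection $\overline{V}_n(p)/N \twoheadrightarrow L_n(p)$, so weight by weight $\operatorname{ch}(\overline{V}_n(p)/N) \geq \operatorname{ch}(L_n(p))$. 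For the matching upper bound, I combine Lemma~\ref{sec3_lemm_basis-positive-enveloping-algebra} with a PBW basis of $U(\mathfrak{osp}(1|2n)^+)$ (spanned by the odd generators $B_i^+$ and the even elements $\{B_i^+,B_j^+\}$) to expand $\operatorname{ch}(\overline{V}_n(p))$ explicitly, and then argue that imposing the antisymmetric column relations $B_I^+|0\rangle = 0$ for $|I| = p+1$ trims this character down to the Schur-type expression for $\operatorname{ch}(L_n(p))$ predicted by the Howe-dual decomposition recalled in Section~\ref{sec2} (a sum supported on Young diagrams of length at most $p$). Equality of characters then forces the surjection $\overline{V}_n(p)/N \twoheadrightarrow L_n(p)$ to be an isomorphism, yielding $N = M_n(p)$.
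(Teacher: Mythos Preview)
Your first two stages are fine; in fact the identification $\ker\Psi_p = M_n(p)$ via irreducibility of $L_n(p)$ and uniqueness of the maximal proper submodule of a cyclic lowest-weight module is slightly cleaner than what the paper writes. The inclusion $N \subseteq M_n(p)$ is also correctly handled.

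The gap is in the last stage, where you need $M_n(p) \subseteq N$. Your plan is to bound $\operatorname{ch}(\overline{V}_n(p)/N)$ from above by $\operatorname{ch}(L_n(p))$, but the sentence ``imposing the antisymmetric column relations $B_I^+|0\rangle = 0$ for $|I|=p+1$ trims this character down to the Schur-type expression'' is not an argument: it is exactly the assertion to be proved. The submodule $N$ is generated under the full $\mathfrak{osp}(1|2n)$-action, not merely under $U(\mathfrak{osp}(1|2n)^+)$, so describing the quotient by a PBW-type spanning set is delicate. Even if you pass to the left $U(\mathfrak{osp}(1|2n)^+)$-ideal generated by the $B_I^+$ (which only makes $N$ smaller and the bound harder), showing that the resulting quotient has character $\sum_{\ell(\lambda)\le p} s_\lambda$ amounts to a straightening lemma of the sort the paper establishes only later, in Theorem~\ref{sec5_theo_basis}, \emph{after} already using the present lemma. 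So your proposed route, as written, is either circular or missing its key combinatorial step.

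The paper sidesteps this by importing structural information from \cite{Lievens-Stoilova-VanderJeugt-2008}: the Gelfand--Zetlin construction there shows that $M_n(p)$ is generated by specific GZ vectors indexed by $\mathcal{I}(p+1)$, each sitting in a one-dimensional weight space of $M_n(p)$ of weight $\epsilon_I = \tfrac{p}{2}\sum_i \epsilon_i + \sum_{k=1}^{p+1}\epsilon_{i_k}$. Since the nonzero vector $B_I^+|0\rangle$ has that same weight, it must span that one-dimensional space, so the $B_I^+|0\rangle$ already generate $M_n(p)$. If you want a self-contained proof avoiding the GZ basis, you would have to supply the missing character/straightening bound directly, which is substantially more work than the lemma itself.
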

\begin{proof}
The surjectivity of the $\mathfrak{osp}(1|2n)$-module homomorphism $\Psi_p$ follows essentially from the way it is constructed.
The discussion preceding~\eqref{sec3_eq_poly-column-operator-expansion} implies that $B_I^+|0\rangle\in \ker \Psi_p$, for all $I\in\mathcal{I}(p+1)$, and that these are non-zero vectors.
The maximality of~$M_n(p)$ implies that $\ker \Psi_p \subset M_n(p)$, and thus that $B_I^+|0\rangle\in M_n(p)$, for all $I\in\mathcal{I}(p+1)$.

From the construction in \cite{Lievens-Stoilova-VanderJeugt-2008} of a Gelfand--Zetlin basis for~$\overline{V}_n(p)$ it follows that $M_n(p)$ is generated by the action of~$\mathfrak{osp}(1|2n)$ on the Gelfand--Zetlin basis vectors with top row
\begin{gather*}
(\underbrace{1,1,\dots,1}_{p+1},0,0,\dots,0),
	\end{gather*}
in their Gelfand--Zetlin labelling. These vectors can be naturally indexed by the set $\mathcal{I}(p+1)$ such that the vector with index $I\in\mathcal{I}(p+1)$ has weight $\epsilon_I:=\frac{p}{2}\sum_{i=1}^n\epsilon_i+\sum_{k=1}^{p+1}\epsilon_{i_k}$.
The weight space of~$M_n(p)$ corresponding to the weight $\epsilon_I$ is 1-dimensional and~$B_I^+|0\rangle$ is a vector of weight $\epsilon_I$.
This means that $M_n(p)$ is generated by the action of~$\mathfrak{osp}(1|2n)$ on the vectors $B_I^+|0\rangle$, for~$I\in\mathcal{I}(p+1)$, and thus that $\ker \Psi_p = M_n(p)$.
\end{proof}

Recalling that $\mathcal{H}_n(p)\cong V_n(p)=\overline{V}_n(p)/M_n(p)$ this result implies Proposition~\ref{sec2_prop_irrep}.
Furthermore it implies that $\overline{V}_n(p)\cong L_n(p)$ when $p\geq n$, since in that case $\mathcal{I}(p+1)=\varnothing$ and thus $M_n(p)=\{0\}$.
This observation is in fact closely related to a more general result~\cite{Colombo-Sommen-Sabadini-Struppa-2004}, \cite[Theorem 1.1]{Lavicka-Soucek-2017}. In~the context of the present paper the result can be stated as follows.
Let $\mathcal{W}$ be any irreducible factor in the decomposition of~$\mathcal{A}$. Denote the lowest weight of~$\mathcal{W}$ by $\mu\in \mathfrak{h}^*$ and let $\mathcal{V}(\mu)$ denote the corresponding induced module, constructed in much the same way as we constructed $\overline{V}_n(p)$. Then $\mathcal{W}\cong \mathcal{V}(\mu)$ whenever $p\geq 2n$. As~mentioned above this condition reduces to $p\geq n$ for the modules we study here, namely the paraboson Fock spaces. This is a consequence of the fact that they have a unique vacuum, contrary to a general irreducible factor of~$\mathcal{A}$.

\section[Tableau vectors in Ln(p) and Vn(p)]
{Tableau vectors in $\boldsymbol{L_n(p)}$ and~$\boldsymbol{\overline{V}_n(p)}$}\label{sec4}

The main result of this paper is the explicit construction of bases for the modules $L_n(p)$ and~$\overline{V}_n(p)$ which are related by the map $\Psi_p$. In~this section we will construct the sets of vectors that will be our candidates for the desired bases, see Definition~\ref{sec4_defi_basis-elements}. Proving that they are indeed bases for the modules is postponed to the next section. As~touched upon in Section~\ref{sec3}, bases for the modules $V_n(p)$ and~$\overline{V}_n(p)$, which are related by the quotient map
\begin{gather*}
\overline{V}_n(p)\to \overline{V}_n(p)/M_n(p)=V_n(p),
	\end{gather*}
are found and studied in the paper \cite{Lievens-Stoilova-VanderJeugt-2008}.
These basis vectors are identified only with their labellings by Gelfand--Zetlin patterns relative to the $\mathfrak{gl}(n)$ subalgebra of~$\mathfrak{osp}(1|2n)$. The new basis constructed in this section, on the contrary, will consist of explicit polynomials in $L_n(p)$.

Partitions, Young tableaux and symmetric functions will be used throughout this paper. In~the interest of consistency, we will be using the notation established in Macdonald \cite{Macdonald-2015}.
Let~the set of all partitions be denoted by $\mathcal{P}$, and let $\ell(\lambda)$ be the length of the partition~$\lambda\in \mathcal{P}$.
Let~$\mathbb{Y}_n$ denote the set of semistandard (s.s.) Young tableaux with weights in $\mathbb{N}_0^n$, that is with numbers in the set $\{1,\dots,n\}$ as entries. Let $\lambda_A\in\mathcal{P}$ and~$\mu_A\in\mathbb{N}_0^n$ denote the shape and weight of~$A\in\mathbb{Y}_n$ respectively and let
\begin{gather*}
\mathbb{Y}_n(p)=\{A\in \mathbb{Y}_n\colon \ell(\lambda_A)\leq p\}
	\end{gather*}
be the set of s.s.\ Young tableaux in $\mathbb{Y}_n$ with at most $p$ rows. S.s.\ Young tableaux are enumerated by Kostka numbers
\begin{gather}
\label{sec4_eq_kostka}
K_{\lambda,\mu}:=\#\{\text{s.s.\ Young tableaux of shape }\lambda\text{ and weight }\mu\}.
	\end{gather}

The first use we shall make of this terminology is to describe the weight space dimensions of~$L_n(p)$ and~$\overline{V}_n(p)$ as sums of Kostka numbers. This description will serve as the main inspiration for defining basis vectors for the modules.
The action of~$B_i^\pm$ on any weight vector of~$L_n(p)$ or $\overline{V}_n(p)$ changes its weight by $\pm \epsilon_i$, for any $i\in\{1,\dots,n\}$. Therefore, any weight corresponding to a non-zero weight space of either module will be of the form
\begin{gather*}
\mu+\frac{p}{2} := \mu+\bigg(\frac{p}{2},\dots,\frac{p}{2}\bigg)\in \mathfrak{h}^*,
	\end{gather*}
for some $\mu\in\mathbb{N}_0^n$. We~will use the notations $\overline{V}_n(p)_{\mu+\frac{p}{2}}$ and~$L_n(p)_{\mu+\frac{p}{2}}$ respectively for the weight spaces of~$\overline{V}_n(p)$ and~$L_n(p)$ corresponding to the weight $\mu+\frac{p}{2}$, for any $\mu\in\mathbb{N}_0^n$.

\begin{Lemma}
\label{sec4_lemm_dim-weight-spaces}
For any $\mu\in \mathbb{N}_0^n$,
\begin{gather}
\label{sec4_eq_dim-weight-spaces-irrep}
\dim\overline{V}_n(p)_{\mu+\frac{p}{2}} = \sum_{\lambda \in \mathcal{P}} K_{\lambda, \mu},
	\qquad\text{and}\qquad
	\dim L_n(p)_{\mu+\frac{p}{2}} = \sum_{\begin{subarray}{c}\lambda \in \mathcal{P}, \\ \ell(\lambda)\leq p\end{subarray}} K_{\lambda, \mu}.
	\end{gather}
	\end{Lemma}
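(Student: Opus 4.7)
The plan is to compute both dimensions as coefficients of a formal character, via a PBW/Littlewood calculation in the first case, and via the known paraboson character (or equivalently Howe duality) in the second.

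For $\overline{V}_n(p)$, I would first use Lemma~\ref{sec3_lemm_basis-positive-enveloping-algebra} to identify $\overline{V}_n(p)$, as an $\mathfrak{h}$-graded vector space, with $U(\mathfrak{osp}(1|2n)^+)$ with weights shifted by $\big(\tfrac{p}{2},\dots,\tfrac{p}{2}\big)$. The algebra $\mathfrak{osp}(1|2n)^+$ has $n$ odd basis elements $B_i^+$ of weight $\epsilon_i$ and $n(n+1)/2$ even basis elements $\{B_i^+,B_j^+\}$ (for $i\leq j$) of weight $\epsilon_i+\epsilon_j$, so the PBW theorem for Lie superalgebras yields the formal character
\begin{gather*}
\operatorname{ch} U(\mathfrak{osp}(1|2n)^+)=\prod_{i=1}^n(1+x_i)\prod_{1\leq i\leq j\leq n}\frac{1}{1-x_ix_j}.
\end{gather*}
Using $(1+x_i)/(1-x_i^2)=1/(1-x_i)$ this simplifies to $\prod_i(1-x_i)^{-1}\prod_{i<j}(1-x_ix_j)^{-1}$, which by Littlewood's identity (Macdonald Ch.~I, \S5) equals $\sum_{\lambda\in\mathcal{P}}s_\lambda(x_1,\dots,x_n)$. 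Extracting the coefficient of $x^\mu$ and recalling that $K_{\lambda,\mu}$ is precisely the coefficient of $x^\mu$ in $s_\lambda$ gives the first equality in~\eqref{sec4_eq_dim-weight-spaces-irrep}.

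For $L_n(p)\cong\mathcal{H}_n(p)$, I would invoke the character formula for the paraboson Fock space of order $p$, which gives
\begin{gather*}
\operatorname{ch}L_n(p)=\prod_{i=1}^nx_i^{p/2}\sum_{\begin{subarray}{c}\lambda\in\mathcal{P}\\ \ell(\lambda)\leq p\end{subarray}}s_\lambda(x_1,\dots,x_n).
\end{gather*}
This is classical and can be derived either directly from the Howe dual pair $(\mathfrak{osp}(1|2n),G)$ introduced in Section~\ref{sec2} (the component with lowest weight vector $1$ corresponds to the trivial $G$-isotype, whose character is precisely the cut-off sum) or from the $\mathfrak{gl}(n)$-branching $\mathcal{H}_n(p)|_{\mathfrak{gl}(n)}=\bigoplus_{\ell(\lambda)\leq p}V_\lambda^{\mathfrak{gl}(n)}$ established in \cite{Lievens-Stoilova-VanderJeugt-2008}. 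Either way, extracting the coefficient of $x^{\mu+p/2}$ and using the standard identity $[x^\mu]s_\lambda=K_{\lambda,\mu}$ yields the second equality.

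The first equality is an essentially mechanical PBW/Cauchy-type calculation; the only subtlety is to keep track of the superalgebra PBW convention (odd generators appear with exponent $0$ or $1$) and to recognise the combination $\prod_i(1-x_i)^{-1}\prod_{i<j}(1-x_ix_j)^{-1}$ as the left-hand side of Littlewood's identity. The real work is in the second equality: one must know, or prove, that precisely the partitions with $\ell(\lambda)\leq p$ occur in the decomposition of $L_n(p)$ under $\mathfrak{gl}(n)$. The cleanest route within the setup of this paper is through the Howe duality already recorded in Section~\ref{sec2}, together with the observation that $1\in\mathcal{A}$ pairs with the trivial $G$-representation, so that the $G$-row-length cutoff translates into the bound $\ell(\lambda)\leq p$; the consistency check $p\geq n\Rightarrow\overline{V}_n(p)\cong L_n(p)$ afforded by Lemma~\ref{sec3_lemm_kernel-equals-maximal-submodule} (where both sums coincide) supports that this is the correct cutoff.
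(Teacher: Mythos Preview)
Your argument is correct and, for the $L_n(p)$ part, essentially identical to the paper's: both invoke the known character formula $\operatorname{char}L_n(p)=(t_1\cdots t_n)^{p/2}\sum_{\ell(\lambda)\leq p}s_\lambda$ from \cite{Lievens-Stoilova-VanderJeugt-2008} and extract the coefficient of $t^{\mu+p/2}$ via $s_\lambda=\sum_\mu K_{\lambda,\mu}t^\mu$.

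For $\overline{V}_n(p)$ your route differs slightly from the paper's. The paper simply quotes the character formula $\operatorname{char}\overline{V}_n(p)=(t_1\cdots t_n)^{p/2}\sum_{\lambda\in\mathcal{P}}s_\lambda$ from \cite{Loday-Jean-Louis-Popov-2008,Lievens-Stoilova-VanderJeugt-2008} and reads off the weight multiplicities. You instead derive that character yourself: using Lemma~\ref{sec3_lemm_basis-positive-enveloping-algebra} and PBW for Lie superalgebras to obtain $\prod_i(1+x_i)\prod_{i\leq j}(1-x_ix_j)^{-1}$, simplifying to $\prod_i(1-x_i)^{-1}\prod_{i<j}(1-x_ix_j)^{-1}$, and then applying Littlewood's identity. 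This is a more self-contained argument that avoids an external citation for the $\overline{V}_n(p)$ character, at the mild cost of importing Littlewood's identity; the paper's version is shorter but treats the formula as a black box. Both arrive at exactly the same intermediate character expression before the final coefficient extraction.
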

\begin{proof}
The character formulas of~$\overline{V}_n(p)$ and~$V_n(p)$ were obtained in the papers \cite{Loday-Jean-Louis-Popov-2008}, \cite[Theo\-rem~7]{Lievens-Stoilova-VanderJeugt-2008}. Recalling that $V_n(p)\cong L_n(p)$ these character formulas are
\begin{gather*}
\operatorname{char} \overline{V}_n(p) = (t_1\cdots t_n)^{\frac{p}{2}}\sum_{\lambda \in \mathcal{P}} s_\lambda(t_1,\dots, t_n),
	\end{gather*}
and
\begin{gather*}
\operatorname{char} L_n(p) = (t_1\cdots t_n)^{\frac{p}{2}}\sum_{\begin{subarray}{c}\lambda \in \mathcal{P}, \\ \ell(\lambda)\leq p\end{subarray}} s_\lambda(t_1,\dots, t_n),
	\end{gather*}
where $s_\lambda$ denotes the Schur function indexed by $\lambda\in\mathcal{P}$, and~$t_i$ denotes the formal exponential $ e^{\epsilon_i}$.
The lemma then follows by recalling two facts.
First, in the character formulas the coefficients of monomials $t_1^{\mu_1+\frac{p}{2}}\cdots t_n^{\mu_n+\frac{p}{2}}$, for~$\mu\in\mathbb{N}_0^n$, describe the dimensions of the weight spaces of the relevant modules corresponding to the weight $\mu+\frac{p}{2}$. Second, by \cite{Stanley-1999}, the monomial expansion of the Schur function indexed by $\lambda\in\mathcal{P}$ is
\begin{gather*}
s_\lambda(t_1,\dots, t_n) = \sum_{\mu\in\mathbb{N}_0^n} K_{\lambda,\mu}t_1^{\mu_1}\cdots t_n^{\mu_n}.
\tag*{\qed}
\end{gather*}
\renewcommand{\qed}{}
\end{proof}

Recalling how the Kostka numbers enumerate s.s.\ Young tableaux we may interpret Lem\-ma~\ref{sec4_lemm_dim-weight-spaces} as saying that the modules $\overline{V}_n(p)$ and~$L_n(p)$ admit bases indexed by the tableaux in $\mathbb{Y}_n$ and~$\mathbb{Y}_n(p)$ respectively in such a way that the weights of the s.s.\ Young tableaux are related to the weights of their corresponding basis vectors by the addition of~$\big(\frac{p}{2},\ldots,\frac{p}{2}\big)$.

The existence of such bases is not significant in itself unless we can find relatively simple descriptions of the vectors in terms of the corresponding tableaux.
For the construction of the basis vectors for~$L_n(p)$ and~$\overline{V}_n(p)$ the idea is to interpret each column of a given s.s.\ Young tableau $A\in \mathbb{Y}_n$ as an operator of the form~\eqref{sec3_eq_polynomial-column-operator} or~\eqref{sec3_eq_column-operator} and then let each such operator act on~the lowest weight vector with the leftmost column acting first.

To formalize this we need some notation.
Given a partition~$\lambda\in \mathcal{P}$, we denote the conjugate partition by $\lambda'$. Let $a_l$ denote the $l$'th column, counted from left to right, of~$A\in\mathbb{Y}_n$, for all $l\in\{1,\dots,\ell(\lambda_A')\}$.
Since the entries of the columns of~$A$ are strictly increasing downwards we can naturally consider $a_l$ as an element of~$\mathcal{I}((\lambda_A')_l)$, for all $l\in\{1,\dots,\ell(\lambda_A')\}$, noting here that the amount of columns in $A$ is indeed $\ell(\lambda_A')$.

\begin{Definition}
\label{sec4_defi_basis-elements}
Given $A\in\mathbb{Y}_n$ we let $m=\ell(\lambda_A')$ and define
\begin{gather*}
v_A(p):= B_{a_m}^+\cdots B_{a_1}^+|0\rangle\in \overline{V}_n(p),
	\qquad\text{and}\qquad
	\omega_A(p):= X_{a_m}\cdots X_{a_1}(1)\in L_n(p).
	\end{gather*}
	\end{Definition}
In Section~\ref{sec5} we will prove that these vectors form bases for~$\overline{V}_n(p)$ and~$L_n(p)$ respectively. There it will be useful to know that they satisfy the following properties.
\begin{Lemma}
\label{sec4_lemm_basis-vector-properties}
Let $A\in \mathbb{Y}_n$, then $v_A(p)$ and~$\omega_A(p)$ both have weight $\mu_A+\frac{p}{2}$. Furthermore
\begin{gather*}
\Psi_p(v_A(p))=
\begin{cases}
\omega_A(p), & \text{ if }\ A\in\mathbb{Y}_n(p),\\
0, & \text{ if }\ A\notin\mathbb{Y}_n(p).
\end{cases}
	\end{gather*}
	\end{Lemma}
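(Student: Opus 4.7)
The plan is to separate the lemma into two essentially independent claims: the weight computation, and the compatibility of the two families of tableau vectors under $\Psi_p$ (including the vanishing case). Each reduces to elementary use of the structural relations~\eqref{sec2_eq_PCR}, the homomorphism property of $\Psi_p$, and the identity $X_I=0$ for $|I|>p$ recorded after~\eqref{sec3_eq_poly-column-operator-expansion}.

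For the weights, I would first deduce from~\eqref{sec2_eq_PCR} and~\eqref{sec2_eq_cartan} that $[h_j,B_i^\pm]=\pm\delta_{i,j}B_i^\pm$, so every ordered product $B_{i_1}^+\cdots B_{i_k}^+$ is a weight vector of weight $\sum_j \epsilon_{i_j}$. Antisymmetrizing preserves this, so the column operator $B_{a_l}^+$ of~\eqref{sec3_eq_column-operator} has weight $\sum_{i\in a_l}\epsilon_i$. Summing over all columns of $A$ and starting from $|0\rangle$ of weight $\big(\tfrac{p}{2},\ldots,\tfrac{p}{2}\big)$ therefore gives $v_A(p)$ the weight $\frac{p}{2}+\sum_{l}\sum_{i\in a_l}\epsilon_i=\mu_A+\frac{p}{2}$, since by definition $\mu_A$ records the total number of occurrences of each label in the tableau. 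The identical bracket computation works with $X_i$ in place of $B_i^+$: using the realization~\eqref{sec2_eq_operator-realization} together with the Cartan formula $h_i=\frac{p}{2}+\sum_\alpha x_{i,\alpha}\partial_{i,\alpha}$ derived in the proof of Proposition~\ref{sec2_prop_irrep}, one checks $[h_j,X_i]=\delta_{i,j}X_i$, and the same accounting gives the weight of $\omega_A(p)$.

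For the second statement, I would use only that $\Psi_p$ is an $\mathfrak{osp}(1|2n)$-module homomorphism with $\Psi_p(|0\rangle)=1$. Linearity in the definition~\eqref{sec3_eq_column-operator} immediately yields $\Psi_p(B_I^+v)=X_I\Psi_p(v)$ for every $I\in\mathcal{I}(k)$, and iterating column by column gives
\[
\Psi_p(v_A(p))=\Psi_p\big(B_{a_m}^+\cdots B_{a_1}^+|0\rangle\big)=X_{a_m}\cdots X_{a_1}(1)=\omega_A(p).
\]
This already covers the case $A\in\mathbb{Y}_n(p)$. If instead $A\notin\mathbb{Y}_n(p)$, then $\ell(\lambda_A)>p$, so the leftmost column (which is both the longest and the first to act) has size $|a_1|=(\lambda_A')_1=\ell(\lambda_A)>p$; by the cited identity $X_{a_1}=0$ as an operator, hence $X_{a_1}(1)=0$. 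Therefore $\omega_A(p)=0$ and $\Psi_p(v_A(p))=0$.

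There is no serious obstacle in this argument; the only point requiring care is the operator ordering convention in Definition~\ref{sec4_defi_basis-elements}, which places the leftmost (and hence longest) column innermost, applied first to the vacuum. This is precisely what allows the vanishing of $X_{a_1}$ to propagate through the entire product $X_{a_m}\cdots X_{a_1}$ and collapse $\omega_A(p)$ to zero whenever $A\notin\mathbb{Y}_n(p)$.
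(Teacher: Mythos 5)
Your proof is correct and matches the argument the paper leaves implicit (the lemma is stated there without proof as a routine verification): the weight bookkeeping via $[h_j,B_i^+]=\delta_{i,j}B_i^+$ and $[h_j,X_i]=\delta_{i,j}X_i$, the column-by-column use of the intertwining property of $\Psi_p$, and the vanishing $X_I=0$ for $|I|>p$ noted after~\eqref{sec3_eq_poly-column-operator-expansion}, applied to the first (longest) column when $\ell(\lambda_A)>p$. The only quibble is your closing remark: the ordering convention in Definition~\ref{sec4_defi_basis-elements} is not actually needed for the vanishing step, since $X_{a_1}=0$ as an operator annihilates the whole composition wherever it sits in the product.
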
	
	
In order to illustrate the above-mentioned concepts, consider an example. Let $n=5$, $\lambda_A=(4,3,1)$, $\mu_A=(2,2,3,1,0)$ and
\begin{gather*}
A=
\ytableausetup{centertableaux,boxsize=1.1em}
\ytableaushort{1123,233,4}
	\end{gather*}
The columns of~$A$ are then $a_1=(1,2,4)$, $a_2=(1,3)$, $a_3=(2,3)$ and~$a_4=(3)$. In~that case
\begin{gather*}
v_A(p)=B^+_{3}B^+_{(2,3)}B^+_{(1,3)}B^+_{(1,2,4)}|0\rangle\qquad\text{and}\qquad \omega_A(p)=X_{3}X_{(2,3)}X_{(1,3)}X_{(1,2,4)}(1),
	\end{gather*}
where $B_{a_l}^+$ and~$X_{a_l}$, for~$l\in\{1,\dots,4\}$, are defined in~\eqref{sec3_eq_column-operator} and~\eqref{sec3_eq_polynomial-column-operator} respectively. In~the case $p \geq 3$, $v_A(p)\neq 0$, $\omega_A(p)\neq 0$. On the other hand, if $p<3$, then $v_A(p)\neq 0$ and~$\omega_A(p)=0$.

\section[Bases for Ln(p) and Vn(p)]
{Bases for~$\boldsymbol{L_n(p)}$ and~$\boldsymbol{\overline{V}_n(p)}$}\label{sec5}

The goal of this section is to prove that the sets $\{v_A(p)\colon A\in \mathbb{Y}_n\}$ and~$\{\omega_A(p)\colon A\in \mathbb{Y}_n(p)\}$, defined in Definition~\ref{sec4_defi_basis-elements}, form bases for~$\overline{V}_n(p)$ and~$L_n(p)$. This is the content of Theorem~\ref{sec5_theo_basis}.
The main difficulty lies in proving mutual linear independence of the vectors in $\{\omega_A(p)\colon A\in \mathbb{Y}_n(p)\}$, the rest is achieved by applying results from the previous sections.
To prove this linear independence we construct a total ordering of~$\mathbb{Y}_n(p)$ with the following property
\begin{gather*}
\omega_A(p) \notin \operatorname{span}\{\omega_B(p)\colon B\in \mathbb{Y}_n(p), B<A\}.
	\end{gather*}
Given that the weight spaces of~$L_n(p)$ are finite dimensional by Lemma~\ref{sec4_lemm_dim-weight-spaces}, the existence of~such a total ordering implies linear independence.

\begin{Definition}
Given $A\in\mathbb{Y}_n$ and~$k\in \{1,\dots,n\}$ the $k$'th subtableau of~$A$ is defined to be the tableau $A^k\in\mathbb{Y}_n$ obtained by truncating $A$ to only the entries containing the numbers $1,\dots,k$.
	\end{Definition}

For example, we have
\begin{gather*}
A=
\ytableausetup{centertableaux,boxsize=1.1em}
\ytableaushort{1124,234}
\implies
A^4=A,
\quad\
A^3=
\ytableausetup{centertableaux,boxsize=1.1em}
\ytableaushort{112,23}\,,
\quad\
A^2=
\ytableausetup{centertableaux,boxsize=1.1em}
\ytableaushort{112,2}
\quad\ \text{and}\quad\
A^1=
\ytableausetup{centertableaux,boxsize=1.1em}
\ytableaushort{11}\,.
	\end{gather*}

To define the total ordering of~$\mathbb{Y}_n$ we endow both $\mathbb{N}_0^n$ and~$\mathcal{P}$ with the graded lexicographic ordering, both denoted by $<$. The definition of these orderings are given in the Appendix~\ref{app1}.

\begin{Definition}
\label{sec5_defi_ordering}
Given $A,B\in \mathbb{Y}_n$ we write $A<B$ if $\mu_A<\mu_B$ in $\mathbb{N}_0^n$, or if $\mu_A=\mu_B$ and there exists $k\in\{1,\dots,n\}$ such that, for all $l<k$,
\begin{gather*}
\lambda_{A^{l}}=\lambda_{B^{l}}\qquad\text{and}\qquad \lambda_{A^k}<\lambda_{B^k}\text{ in } \mathcal{P}.
	\end{gather*}
	\end{Definition}
The relation~$<$ on~$\mathbb{Y}_n$ defined above is a total order. This follows from the fact that the graded lexicographic order gives a total ordering of both $\mathbb{N}_0^n$ and~$\mathcal{P}$.
The total ordering of~$\mathbb{Y}_n$ defined above is inherited by $\mathbb{Y}_n(p)$.

To get a feel for this ordering of~$\mathbb{Y}_n$ consider the following example. Let $n=5$, and consider the ascending chain of all 13 s.s.\ Young tableaux of weight $\mu=(2,1,1,1,0)$:
\begin{gather*}
\ytableausetup{centertableaux,boxsize=1.1em,}
\ytableaushort{11,2,3,4}\, <\ \ytableaushort{11,24,3}\ <\
\ytableaushort{114,2,3}\, <\ \ytableaushort{11,23,4}\ <\
\ytableaushort{114,23}\, <\ \ytableaushort{113,2,4}\ <\
\ytableaushort{113,24}\ <\ \ytableaushort{1134,2}
\\ \hphantom{\ytableaushort{11,2,3,4}\ }
\ytableausetup{centertableaux,boxsize=1.1em,}
{}<\ \ytableaushort{112,3,4}\ <\
\ytableaushort{112,34}\ <\ \ytableaushort{1124,3}\ <\
\ytableaushort{1123,4}\ <\ \ytableaushort{11234}\,.
	\end{gather*}

To see the connection between this ordering and the prospective basis for~$L_n(p)$ we need to consider the expansion of the vectors $\omega_A(p)$ into monomials. To do so, a new class of tableaux is needed.
Let
\begin{gather*}
\mathbb{T}(\lambda,p) :=
\begin{Bmatrix}
\text{ Fillings of the Young diagram of shape $\lambda\in \mathcal{P}$ with}\\
\text{ numbers $1,\dots,p$ occuring at most once in each column}
	\end{Bmatrix}.
	\end{gather*}
We will refer to the tableaux in $\mathbb{T}(\lambda,p)$ as column distinct (c.d.) Young tableaux.
Some examples of these are:
\begin{gather*}
\ytableausetup{centertableaux,boxsize=1.1em}
\ytableaushort{2121,3333,14}\,,
\quad
\ytableaushort{3121,2433,13}\,,
\quad
\ytableaushort{3133,2421,13}
\quad\in
\mathbb{T}((4,4,2),4).
	\end{gather*}

From now on we shall adopt the slightly abusive notation of writing $\lambda$ both for a partition in~$\mathcal{P}$ and for the set of coordinates in the corresponding Young diagram. That is,
\begin{gather*}
\lambda=\big\{(k,l)\colon l\in \{1,\dots,\ell(\lambda')\}\ \text{and}\ k\in \{1,\dots, \lambda_l'\}\big\},
	\end{gather*}
for all $\lambda\in \mathcal{P}$.
With the notion of c.d.\ Young tableaux the following monomials in $\mathbb{C}[\mathbb{R}^{np}]$ and~$\mathcal{C}\ell_p$ can be defined.
Given a tableau $C\in \mathbb{T}(\lambda,p)$ and~$A\in \mathbb{Y}_n(p)$ of shape $\lambda=\lambda_A\in \mathcal{P}$. We~denote the entries of~$A$ and~$C$ by $a_{k,l}$ and~$c_{k,l}$ respectively, for all $(k,l)\in \lambda$. Letting $m=\ell(\lambda')$ be the number of columns in $A$ and~$C$ we define monomials
\begin{gather}
\label{sec5_eq_clifford-monomial-def}
e_C:=(e_{c_{1,m}}\cdots e_{c_{\lambda_m',m}})\cdots (e_{c_{1,1}}\cdots e_{c_{\lambda_1',1}})\in \mathcal{C}\ell_p
	\end{gather}
and
\begin{gather}
\label{sec5_eq_monomial-def}
x_{A,C}:=
\prod_{(k,l)\in \lambda} x_{a_{k,l},c_{k,l}}\in \mathbb{C}[\mathbb{R}^{np}].
	\end{gather}	
For example, consider the case $n=4$, $p=4$, and
\begin{gather*}
A=\ytableausetup{centertableaux,boxsize=1.1em}
\ytableaushort{12,23,34}
\qquad \text{and} \qquad
C=\ytableaushort{23,31,14}\, .
	\end{gather*}
The monomials defined in~\eqref{sec5_eq_clifford-monomial-def} and~\eqref{sec5_eq_monomial-def} will then take the form
\begin{gather*}
e_C= e_3e_1e_4e_2e_3e_1= e_2e_4
	\end{gather*}
and
\begin{gather*}
x_{A,C}=x_{1,2}x_{2,3}^2x_{3,1}^2x_{4,4}.
	\end{gather*}
To each partition~$\lambda\in \mathcal{P}$ we associate the following factorial:
\begin{gather*}
\lambda!:= \lambda_1'!\cdots \lambda_{\lambda_1}'!\in \mathbb{N}.
	\end{gather*}

Recalling how $\omega_A(p)$, $e_C$ and~$x_{A,C}$ where defined in Definition~\ref{sec4_defi_basis-elements},~\eqref{sec5_eq_clifford-monomial-def} and~\eqref{sec5_eq_monomial-def}, respectively, a short calculation using~\eqref{sec3_eq_poly-column-operator-expansion} gives the following monomial expansion of~$\omega_A(p)$.

\begin{Lemma}
\label{sec5_lemm_monomial-expansion}
For all $A\in \mathbb{Y}_n(p)$, we have
\begin{gather*}
\omega_A(p)=\lambda_A!\sum_{C\in\mathbb{T}(\lambda_A,p)} x_{A,C}e_C.
	\end{gather*}
	\end{Lemma}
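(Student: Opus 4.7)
The plan is to substitute the explicit expansion of each column operator~$X_{a_l}$ from~\eqref{sec3_eq_poly-column-operator-expansion} into the definition $\omega_A(p) = X_{a_m}\cdots X_{a_1}(1)$, and then rearrange.

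Write $\lambda := \lambda_A$, $m = \ell(\lambda')$ (the number of columns of~$A$), $k_l := \lambda_l'$ (the length of the $l$'th column), and $a_{k,l}$ for its $k$'th entry, so that $a_l = (a_{1,l},\ldots,a_{k_l,l})\in\mathcal{I}(k_l)$. Applying~\eqref{sec3_eq_poly-column-operator-expansion} to each column gives
\begin{gather*}
X_{a_l} = k_l!\!\!\sum_{\substack{\alpha_{1,l},\ldots,\alpha_{k_l,l}\in\{1,\ldots,p\}\\ \text{mutually distinct}}}\!\! x_{a_{1,l},\alpha_{1,l}}\cdots x_{a_{k_l,l},\alpha_{k_l,l}}\,e_{\alpha_{1,l}}\cdots e_{\alpha_{k_l,l}}.
\end{gather*}

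Next I would form the product $X_{a_m}\cdots X_{a_1}\cdot 1$ inside $\mathcal{A}=\mathbb{C}[\mathbb{R}^{np}]\otimes\mathcal{C}\ell_p$. Since the polynomial variables and the Clifford generators lie in commuting tensor factors, all the~$x_{i,\alpha}$ may be collected to the left of all the~$e_\alpha$ without any sign change. The resulting sum ranges over all families $\{\alpha_{k,l}\}_{(k,l)\in\lambda}$ such that the entries within each column are pairwise distinct; setting $c_{k,l}:=\alpha_{k,l}$ identifies these families precisely with the column-distinct fillings $C\in\mathbb{T}(\lambda,p)$.

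Matching the three pieces with the definitions then concludes the proof. The polynomial factor $\prod_{(k,l)\in\lambda} x_{a_{k,l},c_{k,l}}$ is exactly $x_{A,C}$. The Clifford factor, accumulated in the column order $l=m,m-1,\ldots,1$ inherited from the order of the operators $X_{a_l}$, reads
\begin{gather*}
(e_{c_{1,m}}\cdots e_{c_{k_m,m}})\cdots(e_{c_{1,1}}\cdots e_{c_{k_1,1}}),
\end{gather*}
which is exactly $e_C$. The accumulated numerical prefactor is $\prod_{l=1}^{m} k_l! = \lambda_A!$, yielding the stated formula.

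I do not expect any serious obstacle; the calculation is essentially bookkeeping. The only point requiring care is verifying that the order in which the Clifford generators appear agrees verbatim with the grouping in the definition of~$e_C$ — this is automatic, since operators act right-to-left on $1$ while the Clifford factor they contribute sits in the same column-indexed order as written. One may further observe, as a consistency check, that the column-distinct restriction in $\mathbb{T}(\lambda,p)$ is forced precisely by the antisymmetrization built into each~$X_{a_l}$, not by any vanishing of the Clifford product (since $e_\alpha^2=1$).
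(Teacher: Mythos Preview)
Your proposal is correct and is essentially the same as the paper's own argument. The paper does not give a detailed proof of this lemma; it simply states that ``a short calculation using~\eqref{sec3_eq_poly-column-operator-expansion}'' yields the expansion, and what you wrote is precisely that short calculation, carried out carefully and with the correct bookkeeping of the Clifford order in~$e_C$ and the prefactor~$\lambda_A!$.
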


To illustrate Definition~\ref{sec4_defi_basis-elements} and Lemma~\ref{sec5_lemm_monomial-expansion} we consider as an example $n=2$, $p=2$ and
\begin{gather*}
A=\ytableausetup{centertableaux,boxsize=1.1em}\ytableaushort{112,2}
	\end{gather*}
Definition~\ref{sec4_defi_basis-elements} gives
\begin{gather*}
\omega_A(p) = X_2 X_1 X_{1,2}
	=2!(x_{2,1}e_1+x_{2,2}e_2)(x_{1,1}e_1+x_{1,2}e_2)(x_{1,1}x_{2,2}e_1e_2+x_{1,2}x_{2,1}e_2e_1).
	\end{gather*}
On the other hand Lemma~\ref{sec5_lemm_monomial-expansion} gives
\begin{gather*}
\ytableausetup{centertableaux,boxsize=0.9em}
\omega_A(p)=2x\!{}_{\left(\,\tiny\ytableaushort{112,2}\,,\,\ytableaushort{212,1}\,\right)}
e\,{}_{\tiny\ytableaushort{212,1}}
+2x\!{}_{\left(\,\tiny\ytableaushort{112,2}\,,\,\ytableaushort{121,2}\,\right)}
e\,{}_{\tiny\ytableaushort{121,2}}
+2x\!{}_{\left(\,\tiny\ytableaushort{112,2}\,,\,\ytableaushort{211,1}\,\right)}
e\,{}_{\tiny\ytableaushort{211,1}}
\\ \hphantom{\omega_A(p)=}
{}+2x\!{}_{\left(\,\tiny\ytableaushort{112,2}\,,\,\ytableaushort{222,1}\,\right)}
e\,{}_{\tiny\ytableaushort{222,1}}
+2x\!{}_{\left(\,\tiny\ytableaushort{112,2}\,,\,\ytableaushort{111,2}\,\right)}
e\,{}_{\tiny\ytableaushort{111,2}}
+2x\!{}_{\left(\,\tiny\ytableaushort{112,2}\,,\,\ytableaushort{122,2}\,\right)}
e\,{}_{\tiny\ytableaushort{122,2}}
\\ \hphantom{\omega_A(p)=}
{}+2x\!{}_{\left(\,\tiny\ytableaushort{112,2}\,,\,\ytableaushort{112,2}\,\right)}
e\,{}_{\tiny\ytableaushort{112,2}}
+2x\!{}_{\left(\,\tiny\ytableaushort{112,2}\,,\,\ytableaushort{221,1}\,\right)}
e\,{}_{\tiny\ytableaushort{221,1}}\,.
\end{gather*}
Both formulas give the expansion
\begin{gather*}
\omega_A(p)=-4x_{1,1}x_{1,2}x_{2,1}x_{2,2}-2x_{1,1}x_{1,2}x_{2,1}^2e_1e_2-2x_{1,2}^2x_{2,1}x_{2,2}e_1e_2 +2x_{1,1}^2x_{2,1}x_{2,2}e_1e_2
\\ \hphantom{\omega_A(p)=}
+2x_{1,1}x_{1,2}x_{2,2}^2e_1e_2+2x_{1,1}^2x_{2,2}^2+2x_{1,2}^2x_{2,1}^2.
	\end{gather*}

When looking at an expansion such as the one in Lemma~\ref{sec5_lemm_monomial-expansion}, it is natural to ask the following questions. Given $A\in\mathbb{Y}_n(p)$ and~$C\in\mathbb{T}(\lambda_A,p)$, does there exist $C'\in\mathbb{T}(\lambda_A,p)$ with $C\neq C'$ such that
\begin{gather*}
x_{A,C}e_C=\pm x_{A,C'}e_{C'}.
	\end{gather*}
If so, which $C'\in\mathbb{T}(\lambda_A,p)$ has this property. As~is demonstrated by the following example,~\eqref{sec5_eq_proportional-monomials}, the answer to the first question is in some instances yes, though we will shortly describe a class of c.d. Young tableaux for which the answer is always no, see~\eqref{sec5_eq_leading-tableau},
\begin{gather}
\label{sec5_eq_proportional-monomials}
\ytableausetup{centertableaux,boxsize=0.9em}
x\!{}_{\left(\,\tiny\ytableaushort{11,22}\,,\,\ytableaushort{12,21}\,\right)}
e\,{}_{\tiny\ytableaushort{12,21}}
= x_{1,2}x_{2,1}x_{1,1}x_{2,2}=
x\!{}_{\left(\,\tiny\ytableaushort{11,22}\,,\,\ytableaushort{21,12}\,\right)}
e\,{}_{\tiny
\ytableaushort{21,12}}\,.
	\end{gather}
The second question will be dealt with in more detail in Section~\ref{sec6}. However we will make some preliminary considerations here.
For any exponent matrix $\gamma\in M_{n,p}(\mathbb{N}_0)$ we denote the row and column sums as follows
\begin{gather*}
\mu_\gamma:= \left(\sum_{\alpha=1}^p\gamma_{1,\alpha},\dots,\sum_{\alpha=1}^p\gamma_{n,\alpha}\right)\in \mathbb{N}_0^n
\qquad\text{and}\qquad
\eta_\gamma:= \left(\sum_{i=1}^n\gamma_{i,1},\dots,\sum_{i=1}^n\gamma_{i,p}\right)\in \mathbb{N}_0^p.
	\end{gather*}
The definitions of~$e_C$ and~$x_{A,C}$ in~\eqref{sec5_eq_clifford-monomial-def} and~\eqref{sec5_eq_monomial-def} then imply that for each pair of tableaux $A\in \mathbb{Y}_n(p)$ and~$C\in\mathbb{T}(\lambda_A,p)$ there exists a positive integer $N(C)\in \mathbb{N}$ and a unique exponent matrix $\gamma_{A,C}\in M_{n,p}(\mathbb{N}_0)$ with $\mu_{\gamma_{A,C}}=\mu_A$ such that
\begin{gather}
\label{sec5_eq_exponent-matrix}
x_{A,C}e_C =(-1)^{N(
C)}x^{\gamma_{A,C}} e^{\eta_{\gamma_{A,C}}},
	\end{gather}
recalling that $x^\gamma$ and~$ e^{\eta_{\gamma_{A,C}}}$ were defined in~\eqref{sec2_eq_fundamental-monomials}. A~combinatorial formula for determining $(-1)^{N(
C)}$ is given in~\eqref{sec6_eq_sign-of-young-tableau}. We~note that
\begin{gather}
\label{sec5_eq_exponent-matrix-2}
(\gamma_{A,C})_{i,\alpha}= \#\{ (k,l)\in\lambda_A\colon a_{k,l}=i, c_{k,l}=\alpha\}
	\end{gather}
for all $i\in\{1,\dots,n\}$ and~$\alpha\in\{1,\dots,p\}$.
Using~\eqref{sec2_eq_inner-product}	we denote the normalized coefficient of~$x^\gamma e^{\eta_\gamma}$ in $\omega_A(p)$ by
\begin{gather*}
c_A(\gamma):=\frac{1}{\lambda_A!}\big\langle x^\gamma e^{\eta_\gamma} , \omega_A(p) \big\rangle,
	\end{gather*}
for all $\gamma\in M_{n,p}(\mathbb{N}_0)$, $\eta\in\mathbb{N}_0^p$ and~$A\in\mathbb{Y}_n(p)$.
The expansion of~$\omega_A(p)$ presented in Lemma~\ref{sec5_lemm_monomial-expansion} implies the following result.
\begin{Lemma}\label{sec5_lemm_priliminary-weight-coefficient-formula}
Let $A\in \mathbb{Y}_n(p)$, then
\begin{gather*}
\omega_A(p)=\lambda_A!\sum_{\begin{subarray}{c}\gamma\in M_{n,p}(\mathbb{N}_0) \\ \mu_\gamma=\mu_A\end{subarray}} c_A(\gamma)x^\gamma e^{\eta_\gamma},
	\end{gather*}
where
\begin{gather}
\label{sec5_eq_priliminary-weight-coefficient-formula}
c_A(\gamma)=\sum_{\begin{subarray}{c}C\in\mathbb{T}(\lambda_A,p) \\ \gamma_{A,C}=\gamma\end{subarray}} (-1)^{N(C)}.
	\end{gather}
	\end{Lemma}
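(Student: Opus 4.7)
The plan is to prove this by direct substitution into the monomial expansion from Lemma~\ref{sec5_lemm_monomial-expansion} followed by a regrouping of terms according to their exponent matrices. Starting from $\omega_A(p) = \lambda_A! \sum_{C\in\mathbb{T}(\lambda_A,p)} x_{A,C} e_C$ and applying the identity~\eqref{sec5_eq_exponent-matrix}, each summand takes the form $(-1)^{N(C)} x^{\gamma_{A,C}} e^{\eta_{\gamma_{A,C}}}$. Partitioning the sum over c.d.\ Young tableaux $C$ by the value of $\gamma_{A,C}\in M_{n,p}(\mathbb{N}_0)$ immediately yields
\[\omega_A(p) = \lambda_A! \sum_{\gamma\in M_{n,p}(\mathbb{N}_0)} \biggl(\sum_{C\in\mathbb{T}(\lambda_A,p),\, \gamma_{A,C}=\gamma} (-1)^{N(C)}\biggr) x^\gamma e^{\eta_\gamma},\]
which is the asserted formula modulo the restriction $\mu_\gamma = \mu_A$ on the outer sum.

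Next I would justify that only exponent matrices with $\mu_\gamma = \mu_A$ can actually contribute. Using the combinatorial description of $\gamma_{A,C}$ in~\eqref{sec5_eq_exponent-matrix-2}, namely $(\gamma_{A,C})_{i,\alpha} = \#\{(k,l)\in\lambda_A\colon a_{k,l}=i,\ c_{k,l}=\alpha\}$, summing over $\alpha$ counts the total number of cells of $A$ with entry $i$, which is exactly $(\mu_A)_i$. Hence $\mu_{\gamma_{A,C}} = \mu_A$ for every $C\in\mathbb{T}(\lambda_A,p)$, so the inner sum is empty whenever $\mu_\gamma\neq\mu_A$ and the outer sum may be restricted to $\{\gamma\colon \mu_\gamma = \mu_A\}$. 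This establishes the expansion claim.

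Finally, the coefficient formula~\eqref{sec5_eq_priliminary-weight-coefficient-formula} is obtained by reading off the coefficient of $x^\gamma e^{\eta_\gamma}$ using the definition $c_A(\gamma)=\frac{1}{\lambda_A!}\langle x^\gamma e^{\eta_\gamma}, \omega_A(p)\rangle$ together with the orthonormality stated in~\eqref{sec2_eq_inner-product}; here one uses that $e_\alpha^2=1$ forces $e^{\eta_\gamma} = e^{\eta_\gamma \bmod 2}$ on the nose, so that $x^\gamma e^{\eta_\gamma}$ is a genuine element of the orthonormal basis $\{x^{\gamma'} e^{\eta'}\}_{\gamma'\in M_{n,p}(\mathbb{N}_0),\, \eta'\in\mathbb{Z}_2^p}$. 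The argument is essentially bookkeeping and presents no real obstacle; the one subtle point worth flagging is that distinct c.d.\ tableaux $C$ may produce the same exponent matrix $\gamma_{A,C}$, as witnessed by~\eqref{sec5_eq_proportional-monomials}, so the inner sum in~\eqref{sec5_eq_priliminary-weight-coefficient-formula} genuinely needs to be a sum of several signed terms rather than a single term. Understanding when the resulting signs cancel or reinforce is the nontrivial combinatorial question taken up in Section~\ref{sec6}, but is not needed for the present lemma.
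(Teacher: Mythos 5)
Your proof is correct and follows essentially the same route as the paper, which states the lemma as an immediate consequence of Lemma~\ref{sec5_lemm_monomial-expansion} together with~\eqref{sec5_eq_exponent-matrix}: substitute the monomial expansion, regroup by exponent matrix, and read off coefficients via the orthonormal basis and the definition of $c_A(\gamma)$. Your extra checks (that $\mu_{\gamma_{A,C}}=\mu_A$ via~\eqref{sec5_eq_exponent-matrix-2}, and that $e^{\eta_\gamma}=e^{\eta_\gamma \bmod 2}$ so the pairing is legitimate) are exactly the bookkeeping the paper leaves implicit.
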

Since the coefficients $c_A(\gamma)$ are integer, the expansion of~$\omega_A(p)$ into monomials $x^\gamma e^{\eta_\gamma}$ has integer coefficients. As~we shall see in Section~\ref{sec6}, this is part of the reason that the generator matrix elements turn out to be integers.

While it is true, as illustrated by~\eqref{sec5_eq_proportional-monomials}, that some terms appear multiple times, possibly with different signs, in the expansion from Lemma~\ref{sec5_lemm_monomial-expansion} other terms are more special. Specifically, it will be proven in Proposition~\ref{sec5_prop_leading-term} that the terms corresponding to c.d.\ Young tableaux of the following type are linearly independent of the rest of the terms in the expansion.
Given $A\in \mathbb{Y}_n(p)$, let
\begin{gather}
\label{sec5_eq_leading-tableau}
D_A\in \mathbb{T}(\lambda_A,p)
	\end{gather}
be the c.d.\ Young tableau with all $1$'s in the $1$st row, $2$'s in the $2$nd row and~$k$'s in the $k$th row. For these tableaux~\eqref{sec5_eq_exponent-matrix-2} implies that $\gamma_{A,D_A}$ is lower triangular and that
\begin{gather}
\label{sec5_eq_exponent-matrix-leading-term}
(\gamma_{A,D_A})_{i,\alpha}= \#\{ \text{$i$'s in the $\alpha$'th row of~$A$}\}.
	\end{gather}
As an example of these tableaux,
\begin{gather*}
\ytableausetup{centertableaux,boxsize=1.1em}
\text{if}\quad
A=
\ytableaushort{1123,2234,34}\,,\qquad
\text{then}\quad
D_A=
\ytableaushort{1111,2222,33}\,.
	\end{gather*}

\begin{Proposition}
\label{sec5_prop_leading-term}
Let $A,B\in \mathbb{Y}_n(p)$ with $A<B$ and~$\lambda=\lambda_A$, then
\begin{enumerate}\itemsep=0pt
\item[$(a)$] $c_A(\gamma_{A,D_A})=(-1)^{\sum_{j=1}^{\lambda_1}\frac{(j-1)\lambda_j'(\lambda_j'-1)}{2}}\neq 0$,
\item[$(b)$] $c_A(\gamma_{B,D_B})=0$.
	\end{enumerate}
	\end{Proposition}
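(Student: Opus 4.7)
The plan combines a restriction-to-subtableaux tool with the Gale--Ryser dominance criterion for~(b) and a direct inductive/sign argument for~(a). The common tool: since $A$ is semistandard (column-strict), the cells of $A$ with entry $\le k$ form the sub-Young-diagram of shape $\lambda_{A^k}$, and the restriction of any $C\in\mathbb{T}(\lambda_A,p)$ to these cells is a column-distinct tableau whose content (as a multiset of $\{1,\dots,p\}$-values) is determined by $\gamma_{A,C}$.

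For part~(b), the case $\mu_A\neq\mu_B$ is immediate since the row sums of $\gamma_{A,C}$ and $\gamma_{B,D_B}$ disagree. Assume $\mu_A=\mu_B$, so $|\lambda_{A^k}|=|\lambda_{B^k}|$ for every $k$. If some $C$ satisfied $\gamma_{A,C}=\gamma_{B,D_B}$, then by~\eqref{sec5_eq_exponent-matrix-2} and the description of $D_B$, its restriction to $\lambda_{A^k}$ would be a column-distinct tableau of shape $\lambda_{A^k}$ with content $\lambda_{B^k}$. Sorting entries within each column yields a $0/1$ matrix with row sums $\lambda_{B^k}$ and column sums $(\lambda_{A^k})'$; by the Gale--Ryser theorem, this forces $\lambda_{B^k}\trianglelefteq\lambda_{A^k}$ in dominance, and dominance combined with equal size implies $\lambda_{B^k}\le\lambda_{A^k}$ in graded lex. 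Since this holds for every $k$ we obtain $B\le A$, contradicting $A<B$; hence no such $C$ exists and $c_A(\gamma_{B,D_B})=0$.

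For part~(a), I first prove $D_A$ is the unique $C\in\mathbb{T}(\lambda_A,p)$ with $\gamma_{A,C}=\gamma_{A,D_A}$, by induction on $\alpha$ showing $c_{\alpha,l}=\alpha$ for every $(\alpha,l)\in\lambda_A$. Under the inductive hypothesis, all $C=\alpha$ cells lie in rows $\ge\alpha$; there are exactly $\lambda_\alpha$ such cells, each in a distinct column, and since column $l$ reaches row $\alpha$ iff $l\le\lambda_\alpha$, there is exactly one $C=\alpha$ cell per column $l\in\{1,\dots,\lambda_\alpha\}$, at some row $r_l\ge\alpha$. Condition~\eqref{sec5_eq_exponent-matrix-leading-term} forces the multisets $\{a_{r_l,l}\}_l$ and $\{a_{\alpha,l}\}_l$ to agree; column-strictness of $A$ gives $a_{r_l,l}\ge a_{\alpha,l}$ with equality iff $r_l=\alpha$, so equating the sums of the two multisets forces $r_l=\alpha$ throughout. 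Uniqueness yields $c_A(\gamma_{A,D_A})=(-1)^{N(D_A)}$.

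To compute the sign, I write $e_{D_A}=(e_1\cdots e_{\lambda_{\lambda_1}'})\cdots(e_1\cdots e_{\lambda_1'})$ (reading columns right-to-left) and count anticommutations needed to bring this into $e_1^{\lambda_1}e_2^{\lambda_2}\cdots$: inversions only occur between distinct column-blocks, and each pair $(l,l')$ with $l>l'$ contributes $\binom{\lambda_l'}{2}$ inversions (using $\lambda_l'\le\lambda_{l'}'$), summing to $N(D_A)=\sum_{l=1}^{\lambda_1}(l-1)\binom{\lambda_l'}{2}$, which is the claimed sign. The main obstacle is the uniqueness step in~(a), requiring careful interplay between column-strictness of $A$ and the $\gamma$ condition; the key move in~(b) is translating the total order $A<B$ into a dominance comparison on subtableau shapes via Gale--Ryser.
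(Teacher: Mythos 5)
Your proposal is correct, but both halves take routes that differ from the paper's. For part $(a)$, the paper proves uniqueness of the tableau $C$ with $\gamma_{A,C}=\gamma_{A,D_A}$ by contradiction: it first deduces $a_{k,l}\geq c_{k,l}$ from lower-triangularity of $\gamma_{A,D_A}$, then makes an extremal choice of a cell violating $c_{k_0,l_0}=k_0$ (minimal $s=a_{k_0,l_0}$, maximal $k_0$) and over-counts one entry of the exponent matrix, getting $\zeta_{s,t}\geq 1+\gamma_{s,t}$. Your induction on the Clifford label $\alpha$, pinning the $\lambda_\alpha$ cells of value $\alpha$ to one per column and then forcing $r_l=\alpha$ by comparing the sums of the two multisets of $A$-entries, is a clean alternative that uses column-strictness in essentially the same way; your sign count agrees with the paper's ``short calculation''. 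For part $(b)$, the paper does not invoke Gale--Ryser: it takes the first pair $(s,t)$ where the chains of subtableau shapes differ and exhibits a single violated partial-sum inequality, namely $\sum_{i\leq s,\,\alpha\leq t}(\gamma_{A,C})_{i,\alpha}\leq\sum_{\alpha\leq t}(\lambda_{A^s})_\alpha<\sum_{\alpha\leq t}(\lambda_{B^s})_\alpha=\sum_{i\leq s,\,\alpha\leq t}(\gamma_{B,D_B})_{i,\alpha}$, where the first bound comes from column-distinctness of $C$. Your argument packages exactly this kind of bound into the necessity half of Gale--Ryser, obtaining dominance $\lambda_{B^k}$ below $\lambda_{A^k}$ for \emph{every} $k$ and then descending to graded lex; this buys a conceptually stronger statement (the leading exponent matrix of $B$ can only occur in $\omega_A(p)$ if every subtableau shape of $A$ dominates that of $B$, so the proposition would hold for any total order refining dominance on the chain of subshapes), at the cost of invoking an external classical theorem and of the small unstated step that equality of all subtableau shapes forces $A=B$, which is needed to turn ``$\lambda_{B^k}\leq\lambda_{A^k}$ for all $k$'' into ``$B\leq A$'' via Definition~\ref{sec5_defi_ordering}. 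Your remark that only the cells with entry at most $k$ matter is the same restriction device the paper uses implicitly through $\lambda_{A^s}$, and your uniqueness argument in $(a)$ also explains the paper's comment that any row-constant element of $\mathbb{T}(\lambda_A,p)$ would serve in place of $D_A$.
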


In other words, the expansion of~$\omega_A(p)$ described in Lemma~\ref{sec5_lemm_monomial-expansion} contains a unique leading term $x_{A,D_A}e_{D_A}$ which is linearly independent of all the other terms in the expansion. Furthermore, if $A<B$, the leading term $x_{B,D_B}e_{D_B}$ of~$\omega_B(p)$ does not appear in the expansion of~$\omega_A(p)$.

\begin{proof}
We begin with the proof of statement $(a)$. A~short calculation based on~\eqref{sec5_eq_exponent-matrix} shows that $(-1)^{N(D_A)}=(-1)^{\sum_{j=1}^{\lambda_1}\frac{(j-1)\lambda_j'(\lambda_j'-1)}{2}}$.
Recalling~\eqref{sec5_eq_priliminary-weight-coefficient-formula} it is then enough to prove that $\gamma_{A,D_A}=\gamma_{A,C}$ iff $D_A=C$, for all $C\in \mathbb{T}(\lambda_A,p)$.
One implication is trivial. To prove the other we suppose for contradiction that there exists $C\in \mathbb{T}(\lambda_A,p)$ such that $\gamma_{A,D_A}=\gamma_{A,C}$ and~$D_A\neq C$. We~shall use the notation~$\gamma=\gamma_{A,D_A}$ and~$\zeta=\gamma_{A,C}$ for the remainder of the proof of~$(a)$.

The assumption that $\gamma=\zeta$ together with the formula~\eqref{sec5_eq_exponent-matrix-2} and the fact that $\gamma$ is lower triangular, implies that
\begin{gather}
\label{sec5_eq_entry-inequility}
a_{k,l}\geq c_{k,l},
	\end{gather}
for all $(k,l)\in\lambda_A$.

Recall that the $(k,l)'th$ entry of~$D_A$ is $k$, for all $(k,l)\in\lambda_A$. The second assumption, namely that $D_A\neq C$, implies that there exists $(k_0,l_0)\in \lambda_A$ such that $c_{k_0,l_0}\neq k_0$. Let $s=a_{k_0,l_0}$, then among the possible options we may chose $(k_0,l_0)$ such that $s$ is as small as possible and~$k_0$ is as large as possible. In~other words $(k_0,l_0)$ is chosen such that $c_{k,l}=k$ for all $(k,l)\in\lambda_A$ with $a_{k,l}<s$, or with $a_{k,l}=s$ and~$k>k_0$.
Since the entries in the $l_0$'th column of~$C$ are distinct, our choice of~$(k_0,l_0)$ together with~\eqref{sec5_eq_entry-inequility} imply that $c_{k_0,l_0}=t$, for some $t\in \{k_0+1,\dots,s\}$.
Furthermore, since $t>k_0$, our choice of~$(k_0,l_0)$ also implies that $c_{t,l}=t$ for all $l\in\{1,\dots,(\lambda_A)_t\}$ with $a_{t,l}=s$. By~\eqref{sec5_eq_exponent-matrix-leading-term}, we have thus found $1+\gamma_{s,t}$ distinct coordinates $(k,l)\in\lambda_A$ such that $a_{k,l}=s$ and~$c_{k,l}=t$. By~\eqref{sec5_eq_exponent-matrix-2} this means that $\zeta_{s,t}\geq 1+\gamma_{s,t}$ and thus that $\gamma\neq \zeta$. This contradicts our initial assumption, finally proving that $\gamma_{A,D_A}=\gamma_{A,C}$ iff $D_A=C$.

To prove statement $(b)$, let $A,B\in \mathbb{Y}_n(p)$ with $A<B$. By~\eqref{sec5_eq_priliminary-weight-coefficient-formula} it is enough to prove that $\gamma_{B,D_B}\neq\gamma_{A,C}$, for all $C\in \mathbb{T}(\lambda_A,p)$. This statement is trivial if $\mu_B=\mu_A$. We~assume therefore that $A$ and~$B$ have the same weight and consider an arbitrary $C\in\mathbb{T}(\lambda_A,p)$. For the remainder of this proof we shall use the notation~$\gamma=\gamma_{B,D_B}$ and~$\zeta=\gamma_{A,C}$.
By Definition~\ref{sec5_defi_ordering} the assumption~$A<B$ implies that there exists $s,t\in\{1,\dots,n\}$ such that
\begin{gather*}
\lambda_{A^i}=\lambda_{B^i},
	\qquad
	(\lambda_{A^{s}})_{\alpha}=(\lambda_{B^{s}})_{\alpha}
	\qquad\text{and}\qquad
	(\lambda_{A^{s}})_{t}<(\lambda_{B^{s}})_{t},
	\end{gather*}
for all $i<s$ and~$\alpha<t$.
Using this together with~\eqref{sec5_eq_exponent-matrix-leading-term} and the fact that in each column of~$C$ the entries are mutually distinct, we can make the estimate
\begin{gather*}
\sum_{i\leq s,\alpha\leq t} \zeta_{i\alpha}
	\leq\sum_{\alpha\leq t} (\lambda_{A^s})_\alpha
	<\sum_{\alpha\leq t} (\lambda_{B^s})_\alpha
	= \sum_{i\leq s,\alpha\leq t} \gamma_{i\alpha}.
	\end{gather*}
This clearly implies $\gamma\neq\zeta$, which proves $(b)$.
	\end{proof}

One should note that $D_A$ is not the only c.d.\ Young tableau in $\mathbb{T}(\lambda_A, p)$ with the properties described in Proposition~\ref{sec5_prop_leading-term}. These properties are satisfied by any tableau in $\mathbb{T}(\lambda_A, p)$ for which all entries of any single row are equal.

\begin{Corollary}
\label{sec5_coro_increasing-basis}
For any $B\in \mathbb{Y}_n(p)$, $\omega_B(p)\neq 0$ and~$\omega_B(p)\notin \operatorname{span}_\mathbb{C}\{\omega_A(p)\colon A<B\}$.
	\end{Corollary}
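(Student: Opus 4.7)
The plan is to extract this corollary directly from the ``unique leading term'' statement of Proposition~\ref{sec5_prop_leading-term}, combined with the monomial expansion given in Lemma~\ref{sec5_lemm_priliminary-weight-coefficient-formula}. The overall structure will be a standard triangularity argument with respect to the total order on $\mathbb{Y}_n(p)$: for each $B\in\mathbb{Y}_n(p)$ the monomial $x^{\gamma_{B,D_B}}e^{\eta_{\gamma_{B,D_B}}}$ serves as a ``pivot'' that detects $\omega_B(p)$ and is invisible to all $\omega_A(p)$ with $A<B$.

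First I would dispatch nonvanishing. By Lemma~\ref{sec5_lemm_priliminary-weight-coefficient-formula}, the coefficient of the basis vector $x^{\gamma_{B,D_B}}e^{\eta_{\gamma_{B,D_B}}}$ of $\mathcal{A}$ in the expansion of $\omega_B(p)$ equals $\lambda_B!\, c_B(\gamma_{B,D_B})$. Proposition~\ref{sec5_prop_leading-term}(a) evaluates this coefficient to $\pm\lambda_B!$, which is nonzero, so $\omega_B(p)\neq 0$.

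For the second claim I would argue by contradiction. Suppose $\omega_B(p)=\sum_{A<B}\alpha_A\omega_A(p)$ for some scalars $\alpha_A\in\mathbb{C}$, where only finitely many $\alpha_A$ are nonzero. Applying the inner product $\langle \,\cdot\,,\,\cdot\,\rangle$ from~\eqref{sec2_eq_inner-product} against $x^{\gamma_{B,D_B}}e^{\eta_{\gamma_{B,D_B}}}$ and invoking Lemma~\ref{sec5_lemm_priliminary-weight-coefficient-formula} on both sides yields
\begin{gather*}
\lambda_B!\, c_B(\gamma_{B,D_B}) \;=\; \sum_{A<B}\alpha_A\,\lambda_A!\, c_A(\gamma_{B,D_B}).
\end{gather*}
The left-hand side is nonzero by Proposition~\ref{sec5_prop_leading-term}(a), while every term on the right-hand side vanishes by Proposition~\ref{sec5_prop_leading-term}(b), a contradiction.

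No genuine obstacle remains once Proposition~\ref{sec5_prop_leading-term} is in hand; the only thing to be careful about is making explicit that the coefficients $c_A(\gamma)$ are the components of $\omega_A(p)$ in a genuine vector space basis of $\mathcal{A}$ (i.e., the monomials $x^\gamma e^\eta$ with $\gamma\in M_{n,p}(\mathbb{N}_0)$ and $\eta\in\mathbb{Z}_2^p$), so that extracting the $x^{\gamma_{B,D_B}}e^{\eta_{\gamma_{B,D_B}}}$-coefficient is a well-defined linear functional that can be applied to both sides of the hypothetical dependence.
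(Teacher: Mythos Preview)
Your proposal is correct and matches the paper's intended argument: the corollary is stated immediately after Proposition~\ref{sec5_prop_leading-term} without a separate proof, precisely because it follows by the triangularity argument you describe, with $x^{\gamma_{B,D_B}}e^{\eta_{\gamma_{B,D_B}}}$ serving as the pivot monomial detected by part~(a) and annihilated by part~(b).
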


We can now finally prove that the vectors defined in Definition~\ref{sec4_defi_basis-elements}, with properties described in Lemma~\ref{sec4_lemm_basis-vector-properties}, form bases for the modules $\overline{V}_n(p)$ and~$L_n(p)$.
\begin{Theorem}
\label{sec5_theo_basis}
The modules $\overline{V}_n(p)$ and~$L_n(p)$ have bases
\begin{gather*}
\{v_A(p)\colon A\in \mathbb{Y}_n\}\qquad\text{and}\qquad \{\omega_A(p) \colon A\in \mathbb{Y}_n(p)\}.
	\end{gather*}
	\end{Theorem}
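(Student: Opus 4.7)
The plan is to first establish the $L_n(p)$ statement by combining the leading-term non-vanishing in Corollary~\ref{sec5_coro_increasing-basis} with the dimension formula of Lemma~\ref{sec4_lemm_dim-weight-spaces}, and then to bootstrap to $\overline{V}_n(p)$ by exploiting that, through Lemma~\ref{sec3_lemm_basis-positive-enveloping-algebra}, the question reduces to a $p$-independent statement about $U(\mathfrak{osp}(1|2n)^+)$.

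For $L_n(p)$: By Lemma~\ref{sec4_lemm_basis-vector-properties} the vector $\omega_A(p)$ lies in the weight space indexed by $\mu_A+\tfrac{p}{2}$, and by Lemma~\ref{sec4_lemm_dim-weight-spaces} each weight space is finite-dimensional, so both linear independence and spanning can be checked one weight at a time. If $\sum_A c_A \omega_A(p)=0$ is a non-trivial relation among tableaux of a fixed weight, I would take the $<$-maximum $B\in\mathbb{Y}_n(p)$ with $c_B\neq 0$; then $\omega_B(p)=-c_B^{-1}\sum_{A<B}c_A\omega_A(p)$ lies in $\operatorname{span}\{\omega_A(p) : A<B\}$, contradicting Corollary~\ref{sec5_coro_increasing-basis}. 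For spanning, the number of $A\in\mathbb{Y}_n(p)$ with $\mu_A=\mu$ equals $\sum_{\lambda,\ell(\lambda)\leq p}K_{\lambda,\mu}$ by \eqref{sec4_eq_kostka}, which matches $\dim L_n(p)_{\mu+p/2}$ by Lemma~\ref{sec4_lemm_dim-weight-spaces}; a linearly independent family of the correct cardinality in a finite-dimensional weight space must be a basis of that weight space, so the union over weights gives a basis of $L_n(p)$.

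For $\overline{V}_n(p)$: write $u_A := B_{a_m}^+\cdots B_{a_1}^+\in U(\mathfrak{osp}(1|2n)^+)$; this element depends on $A$ but not on $p$, and $v_A(p)=\Phi_p(u_A)$. Since $\Phi_p$ is a vector space isomorphism for every $p$ (Lemma~\ref{sec3_lemm_basis-positive-enveloping-algebra}), the assertion that $\{v_A(p) : A\in\mathbb{Y}_n\}$ is a basis of $\overline{V}_n(p)$ is equivalent to the $p$-free assertion that $\{u_A : A\in\mathbb{Y}_n\}$ is a basis of $U(\mathfrak{osp}(1|2n)^+)$. It therefore suffices to verify it for one convenient value of $p$. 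Taking $p_0\geq n$, every $A\in\mathbb{Y}_n$ satisfies $\ell(\lambda_A)\leq n\leq p_0$, so $\mathbb{Y}_n(p_0)=\mathbb{Y}_n$; also $\mathcal{I}(p_0+1)=\varnothing$, so by Lemma~\ref{sec3_lemm_kernel-equals-maximal-submodule} $M_n(p_0)=\{0\}$ and $\Psi_{p_0}$ is an isomorphism. Lemma~\ref{sec4_lemm_basis-vector-properties} then identifies $v_A(p_0)$ with $\omega_A(p_0)$, and the $L_n$-result applied at $p_0$ transports to a basis of $\overline{V}_n(p_0)$; the $\Phi$-reduction then propagates this back to the original $p$.

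The bulk of the technical difficulty is already handled by Proposition~\ref{sec5_prop_leading-term} and Corollary~\ref{sec5_coro_increasing-basis}; the step above is essentially bookkeeping plus a dimension count. The only genuinely new subtlety is the $\overline{V}_n(p)$ case when $p<n$, where $M_n(p)\neq 0$ and Lemma~\ref{sec4_lemm_basis-vector-properties} shows that many vectors $v_A(p)$ with $\ell(\lambda_A)>p$ lie in $\ker\Psi_p$, so the leading-term argument inside $L_n(p)$ cannot be transferred directly; the $\Phi_p$-reduction to the $p_0\geq n$ regime is what cleanly circumvents this obstacle.
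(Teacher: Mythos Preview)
Your proof is correct and follows essentially the same route as the paper's own argument: for $L_n(p)$ you combine Corollary~\ref{sec5_coro_increasing-basis} with the weight-space dimension count of Lemma~\ref{sec4_lemm_dim-weight-spaces}, and for $\overline{V}_n(p)$ you reduce via the isomorphism $\Phi_p$ of Lemma~\ref{sec3_lemm_basis-positive-enveloping-algebra} to the $p$-independent question about $U(\mathfrak{osp}(1|2n)^+)$, which you settle by passing to $p_0\geq n$ where $\Psi_{p_0}$ is an isomorphism. The paper does exactly this, with your $p_0$ called $q$ and your $u_A$ called $B_A^+$.
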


\begin{proof}
We begin with $\{\omega_A(p)\colon A\in \mathbb{Y}_n(p)\}$. It is enough to prove that we have bases for each of the weight spaces of~$L_n(p)$. That is, it is enough to prove that, for any $\mu\in\mathbb{N}_0^n$, the set
\begin{gather*}
\{\omega_A(p)\colon A\in \mathbb{Y}_n(p),\ \mu_A=\mu\}
	\end{gather*}
is a basis for~$L_n(p)_{\mu+\frac{p}{2}}$. By Lemma~\ref{sec4_lemm_basis-vector-properties} they have the right weight. Since the set is finite, linear independence follows from Corollary~\ref{sec5_coro_increasing-basis}.
Lemma~\ref{sec4_lemm_dim-weight-spaces} and~\eqref{sec4_eq_kostka} tells us that the cardinality of the set $\{\omega_A(p)\colon A\in \mathbb{Y}_n(p),\ \mu_A=\mu\}$ agrees with the dimension of the weight space, proving that we indeed have a basis.

To prove that $\{v_A(p)\colon A\in \mathbb{Y}_n\}$ is a basis for~$\overline{V}_n(p)$ we consider $q\in\mathbb{N}$, with $q\geq n$. The~comments following Lemma~\ref{sec3_lemm_kernel-equals-maximal-submodule} tell us that $\Psi_q\colon \overline{V}_n(q)\to L_n(q)$ is an isomorphism of~$\mathfrak{osp}(1|2n)$-modules. This together with Lemma~\ref{sec3_lemm_basis-positive-enveloping-algebra} implies that the composition of~$\Psi_q$ and~$\Phi_q$,
\begin{gather*}
\Psi_q\circ\Phi_q\colon\ U\big(\mathfrak{osp}(1|2n)^+\big)\rightarrow L_n(q),
	\end{gather*}
is an isomorphism of vector spaces.
Following the notation used in Definition~\ref{sec4_defi_basis-elements} we define elements
\begin{gather*}
B_A^+:=B_{a_m}^+\cdots B_{a_1}^+\in U\big(\mathfrak{osp}(1|2n)^+\big),
	\end{gather*}
for each $A\in \mathbb{Y}_n$, where $m=\ell(\lambda_A')$.
Noting that $\Psi_q\circ\Phi_q\big(B_A^+\big)=\omega_A(q)$, for all $A\in\mathbb{Y}_n$, it follows that the set $\{B_A^+\colon A\in\mathbb{Y}_n\}$ is a basis for~$U\big(\mathfrak{osp}(1|2n)^+\big)$ and thus by Lemma~\ref{sec3_lemm_basis-positive-enveloping-algebra} that
\begin{gather*}
\big\{v_A(p)=B_A^+|0\rangle=\Phi_p\big(B_A^+\big)\colon A\in \mathbb{Y}_n\big\},
	\end{gather*}
is a basis for~$\overline{V}_n(p)$ regardless of the value of~$p$.
	\end{proof}

The proof of this Theorem yields the following corollary.
\begin{Corollary}
\label{sec5_coro_basis-for-creation-subalgebra}
The set
\begin{gather*}
\big\{B_A^+:=B_{a_m}^+\cdots B_{a_1}^+\colon A\in\mathbb{Y}_n\big\},
	\end{gather*}
where $m=\ell(\lambda_A')$, for each $A$, forms a basis for~$U\big(\mathfrak{osp}(1|2n)^+\big)$.
	\end{Corollary}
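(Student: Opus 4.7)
The plan is to extract this statement directly from the machinery already assembled in the proof of Theorem~\ref{sec5_theo_basis}. The key observation is that although the map $\Psi_p$ has a non-trivial kernel in general, choosing $p$ large relative to $n$ kills the kernel, so that the composition with the vector-space isomorphism $\Phi_p$ from Lemma~\ref{sec3_lemm_basis-positive-enveloping-algebra} gives a concrete identification of $U(\mathfrak{osp}(1|2n)^+)$ with $L_n(p)$ that sends $B_A^+$ to $\omega_A(p)$.

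Concretely, I would fix any integer $q \geq n$. The remarks following Lemma~\ref{sec3_lemm_kernel-equals-maximal-submodule} observe that $\mathcal{I}(q+1)=\varnothing$ in this regime (one cannot extract $q+1$ strictly increasing integers from $\{1,\dots,n\}$), so $M_n(q)=\{0\}$ and $\Psi_q\colon \overline{V}_n(q)\to L_n(q)$ is an $\mathfrak{osp}(1|2n)$-module isomorphism. Combined with the vector-space isomorphism $\Phi_q\colon U(\mathfrak{osp}(1|2n)^+)\to \overline{V}_n(q)$ from Lemma~\ref{sec3_lemm_basis-positive-enveloping-algebra}, the composition $\Psi_q\circ \Phi_q$ is then a vector-space isomorphism.

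Next I would unwind what this isomorphism does to the elements $B_A^+$. For each $A\in\mathbb{Y}_n$, $\Phi_q(B_A^+)=B_A^+|0\rangle=v_A(q)$ by Definition~\ref{sec4_defi_basis-elements}, and since $\mathbb{Y}_n(q)=\mathbb{Y}_n$ when $q\geq n$, Lemma~\ref{sec4_lemm_basis-vector-properties} gives $\Psi_q(v_A(q))=\omega_A(q)$. Because Theorem~\ref{sec5_theo_basis} establishes that $\{\omega_A(q)\colon A\in\mathbb{Y}_n\}$ is a basis for $L_n(q)$, and $\Psi_q\circ\Phi_q$ is an isomorphism, its preimage $\{B_A^+\colon A\in\mathbb{Y}_n\}$ is a basis for $U(\mathfrak{osp}(1|2n)^+)$.

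There is no real obstacle: the corollary is essentially a reorganization of the last paragraph of the proof of Theorem~\ref{sec5_theo_basis}, relying only on Lemmas~\ref{sec3_lemm_basis-positive-enveloping-algebra}, \ref{sec3_lemm_kernel-equals-maximal-submodule} and \ref{sec4_lemm_basis-vector-properties}, Theorem~\ref{sec5_theo_basis}, and the combinatorial fact $\mathbb{Y}_n=\mathbb{Y}_n(q)$ for $q\geq n$. The only modest subtlety worth flagging is that the statement to be proved is about $U(\mathfrak{osp}(1|2n)^+)$, which carries no dependence on $p$, so it suffices to prove it for some single well-chosen value of $p$ rather than for all $p$ simultaneously.
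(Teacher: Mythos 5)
Your proposal is correct and follows essentially the same route as the paper: the paper derives this corollary precisely from the final paragraph of the proof of Theorem~\ref{sec5_theo_basis}, where for $q\geq n$ the composition $\Psi_q\circ\Phi_q$ is a vector-space isomorphism sending $B_A^+$ to $\omega_A(q)$, and the (independently established) basis $\{\omega_A(q)\colon A\in\mathbb{Y}_n(q)=\mathbb{Y}_n\}$ of $L_n(q)$ is pulled back to $U\big(\mathfrak{osp}(1|2n)^+\big)$. Your remark that it suffices to work with one well-chosen $q$ since $U\big(\mathfrak{osp}(1|2n)^+\big)$ does not depend on $p$ is exactly the point the paper exploits.
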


\section[Action of osp(1|2n) on tableau vectors omega A(p)]{Action of~$\boldsymbol{\mathfrak{osp}(1|2n)}$ on tableau vectors $\boldsymbol{\omega_A(p)}$}\label{sec6}

In this section we will obtain formulas for the action of the $\mathfrak{osp}(1|2n)$ generators $X_i$ and~$D_i$ on the basis for~$L_n(p)$ constructed in Definition~\ref{sec4_defi_basis-elements}.
More precisely, considering the normalization
\begin{gather*}
\tilde{\omega}_A(p):= \frac{1}{\lambda_A!}\omega_A(p),
	\end{gather*}
for all $A\in \mathbb{Y}_n(p)$, we want to obtain coefficients $\bar{c}_B(i,p,A)$ and~$\hat{c}_B(i,p,A)$, for all $i\in \{1,\dots,n\}$ and~$A\in\mathbb{Y}_n(p)$, such that
\begin{gather}
X_i\tilde{\omega}_A(p)=\sum_{B\in \mathbb{Y}_n(p)} \bar{c}_B(i,p,A) \tilde{\omega}_B(p),
\\
D_i\tilde{\omega}_A(p)=\sum_{B\in \mathbb{Y}_n(p)} \hat{c}_B(i,p,A) \tilde{\omega}_B(p).
\label{sec6_eq_action-initial-expansion}
	\end{gather}
Regarding the actions of~$B_i^+$ and~$B_i^-$ on the basis vectors $\tilde{v}_A(p)=\frac{1}{\lambda_A!}v_A(p)$ of~$\overline{V}_n(p)$, the corresponding expansion coefficients can be calculated from the coefficients in~\eqref{sec6_eq_action-initial-expansion}. The details of this will be discussed at the end of this section after we obtain formulas for the calculation of the coefficients $\bar{c}_B(i,p,A)$ and~$\hat{c}_B(i,p,A)$, see Proposition~\ref{sec6_prop_verma-matrix-elements}.

Before producing general formulas for obtaining the expansions in~\eqref{sec6_eq_action-initial-expansion}, we take a look at a~few cases where the actions of~$X_i$ and~$D_i$ on~$\tilde{\omega}_A(p)$ are particularly simple.
First, if $(\mu_A)_i=0$ then $D_i\tilde{\omega}_A(p)=0$. Second, if $i$ is greater than or equal to the topmost entry of the rightmost column of~$A$, then $X_i\tilde{\omega}_A(p)=\tilde{\omega}_{B}(p)$, where $B$ is the tableau obtained by adding a single column containing only the box $\ytableausetup{smalltableaux}\ytableaushort{i}$ to the right side of~$A$.
For example, if $A=\ytableausetup{smalltableaux}\ytableaushort{23,4}$, then
\begin{gather*}
\ytableausetup{boxsize=0.9em}
X_i\tilde{\omega}\,{}_{\tiny{\ytableaushort{23,4}}}(p)
=\tilde{\omega}\,{}_{\tiny{\ytableaushort{23i,4}}}(p),
	\end{gather*}
for all $i\geq 3$.

The vectors $X_i\tilde{\omega}_A(p)$ and~$D_i\tilde{\omega}_A(p)$ are weight vectors, for all $i\in\{1,\dots,n\}$ and~$A\in \mathbb{Y}_n(p)$, specifically
\begin{gather*}
X_i\tilde{\omega}_A(p) \in L_n(p)_{\mu_A+\epsilon_i+\frac{p}{2}}
\qquad\text{and}\qquad
D_i\tilde{\omega}_A(p)	\in L_n(p)_{\mu_A-\epsilon_i+\frac{p}{2}}.
	\end{gather*}
This fact leads to the following observation
\begin{Remark}
\label{sec6_rema_vanishing-coefficients}
For all $i\in \{1,\dots,n\}$ and~$A,B\in\mathbb{Y}_n(p)$,
\begin{gather*}
\bar{c}_B(i,p,A)=0\qquad \text{if}\quad \mu_B \neq \mu_A+\epsilon_i
	\end{gather*}
and
\begin{gather*}
\hat{c}_B(i,p,A)=0\qquad \text{if}\quad \mu_B \neq\mu_A-\epsilon_i.
	\end{gather*}
	\end{Remark}
Given a weight $\mu\in \mathbb{N}_0^n$ we let $d_\mu$ be the dimension of the weight space $L_n(p)_{\mu+\frac{p}{2}}$, see~\eqref{sec4_eq_dim-weight-spaces-irrep},
\begin{gather*}
d_\mu:= \dim L_n(p)_{\mu+\frac{p}{2}}.
	\end{gather*}
Remark~\ref{sec6_rema_vanishing-coefficients} then tells us that $X_i\tilde{\omega}_A(p)$ and~$D_i\tilde{\omega}_A(p)$ are linear combinations of at most $d_{\mu_A+\epsilon_i}$ and~$d_{\mu_A-\epsilon_i}$ tableau vectors respectively.

The remaining coefficients in~\eqref{sec6_eq_action-initial-expansion} can be obtained by solving the system of linear equations that comes from comparing the monomial expansions, Lemma~\ref{sec5_lemm_priliminary-weight-coefficient-formula}, of~$X_i\tilde{\omega}_A$ and~$D_i\tilde{\omega}_A$ with those of~$\tilde{\omega}_B(p)$ for~$\mu_B=\mu_A+\epsilon_i$ and~$\mu_B=\mu_A-\epsilon_i$ respectively.
This process is very inefficient as it entails calculating all the coefficients in the monomial expansion of the involved vectors.
Our approach will be to reduce the number of coefficients we need to determine to only the necessary ones and subsequently to find formulas for calculating them.

Given $A\in\mathbb{Y}_n(p)$ and~$C\in\mathbb{T}(\lambda_A,p)$ the coefficient of~$x_{A,C}e_C$ in the monomial expansion of a~given vector $v\in L_n(p)$ is equal to
\begin{gather*}
\langle x_{A,C}e_C,v \rangle.
	\end{gather*}
Given $\mu\in\mathbb{N}_0^n$ we denote the $d_\mu$ s.s.\ Young tableaux $A_1,\dots,A_{d_\mu}$ of weight $\mu$ in such a way that
\begin{gather*}
A_1<\cdots<A_{d_\mu}.
	\end{gather*}
Define the $d_\mu\times d_\mu$ matrix $U_\mu$ and the vectors $f_\mu(v)\in \mathbb{C}^{d_\mu}$, for all $v\in L_n(p)_{\mu+\frac{p}{2}}$, as follows
\begin{gather*}
(U_\mu)_{k,l}:=\langle x_{A_k,D_{A_k}}e_{D_{A_k}}, \tilde{\omega}_{A_l}(p)\rangle
	\end{gather*}
and
\begin{gather*}
(f_\mu)_k(v):=\langle x_{A_k,D_{A_k}}e_{D_{A_k}}, v\rangle,
	\end{gather*}
for all $k,l\in\{1,\dots,d_\mu\}$.
The $(k,l)$'th entry of the matrix $U_\mu$ is thus the coefficient of the leading term $x_{A_k,D_{A_k}}e_{D_{A_k}}$ of~$\tilde{\omega}_{A_k}(p)$ as it appears in $\tilde{\omega}_{A_l}(p)$.

\begin{Proposition}
\label{sec6_prop_priliminary-formula}
For any $\mu\in\mathbb{N}_0^n$, the matrix $U_\mu\in M_{d_\mu,d_\mu}(\mathbb{Z})$ is integer and upper unitriangular.
Furthermore, for any $v\in L_n(p)_{\mu+\frac{p}{2}}$, we have
\begin{gather*}
v	=\sum_{k=1}^{d_\mu} \big(U_{\mu}^{-1}\cdot f_\mu(v)\big)_{k}\tilde{\omega}_{A_k}(p)
\\ \hphantom{v}
{}=\sum_{k=1}^{d_\mu}	\sum_{t=0}^{d_\mu-k}	\sum_{k=l_0<\cdots<l_t\leq d_\mu}
(-1)^t	\prod_{s=1}^t	(U_\mu)_{l_{s-1},l_s}	(f_\mu)_{l_t}	\tilde{\omega}_{A_k}(p).
\end{gather*}
\end{Proposition}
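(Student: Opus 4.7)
The plan is to unpack the proposition into two independent pieces: the structure of the matrix $U_\mu$, and the standard linear-algebra formula for inverting an upper unitriangular matrix via its Neumann expansion.

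First I would verify that $U_\mu$ is upper unitriangular with integer entries, using Proposition~\ref{sec5_prop_leading-term} as the main input. Observe that by~\eqref{sec5_eq_exponent-matrix} and the definition of $c_A(\gamma)$, one has
\begin{gather*}
(U_\mu)_{k,l}=\langle x_{A_k,D_{A_k}}e_{D_{A_k}},\tilde\omega_{A_l}(p)\rangle=(-1)^{N(D_{A_k})}\,c_{A_l}(\gamma_{A_k,D_{A_k}}).
\end{gather*}
Lemma~\ref{sec5_lemm_priliminary-weight-coefficient-formula} shows that each $c_{A_l}(\gamma)$ is a sum of signs, hence an integer, so $U_\mu$ has integer entries. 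For the diagonal, combining the sign computation $(-1)^{N(D_{A_k})}=(-1)^{\sum_{j}(j-1)\lambda_j'(\lambda_j'-1)/2}$ established in the proof of Proposition~\ref{sec5_prop_leading-term}(a) with the same explicit value of $c_{A_k}(\gamma_{A_k,D_{A_k}})$ gives $(U_\mu)_{k,k}=1$. For entries below the diagonal ($k>l$, i.e.\ $A_l<A_k$), Proposition~\ref{sec5_prop_leading-term}(b) applied with the pair $(A,B)=(A_l,A_k)$ yields $c_{A_l}(\gamma_{A_k,D_{A_k}})=0$, so $(U_\mu)_{k,l}=0$. This establishes upper unitriangularity.

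Next I would read off the expansion of $v$ in the tableau basis. By Theorem~\ref{sec5_theo_basis}, the vectors $\tilde\omega_{A_l}(p)$ with $\mu_{A_l}=\mu$ form a basis for the weight space $L_n(p)_{\mu+\frac{p}{2}}$, so there exist unique scalars $c_l$ with $v=\sum_{l} c_l\tilde\omega_{A_l}(p)$. Applying the linear functional $\langle x_{A_k,D_{A_k}}e_{D_{A_k}},\cdot\rangle$ to both sides gives
\begin{gather*}
(f_\mu(v))_k=\sum_{l=1}^{d_\mu}(U_\mu)_{k,l}\,c_l,
\end{gather*}
i.e.\ $f_\mu(v)=U_\mu\cdot c$, whence $c=U_\mu^{-1}\cdot f_\mu(v)$. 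This gives the first displayed formula.

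Finally, to derive the explicit combinatorial expression, I would write $U_\mu=I+N$ with $N$ strictly upper triangular of size $d_\mu$, so that $N^{d_\mu}=0$ and the Neumann series terminates:
\begin{gather*}
U_\mu^{-1}=\sum_{t=0}^{d_\mu-1}(-1)^t N^t.
\end{gather*}
Since the entries of $N$ coincide with those of $U_\mu$ strictly above the diagonal, the $(k,l)$-entry of $N^t$ is the sum over strictly increasing chains $k=l_0<l_1<\cdots<l_t=l$ of the products $\prod_{s=1}^t(U_\mu)_{l_{s-1},l_s}$; substituting into $c_k=\sum_l (U_\mu^{-1})_{k,l}(f_\mu)_l$ produces exactly the triple sum stated in the proposition.

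The only nontrivial step is the unitriangularity argument: showing the diagonal entries equal $1$ requires matching the sign $(-1)^{N(D_A)}$ from~\eqref{sec5_eq_exponent-matrix} with the sign produced by Proposition~\ref{sec5_prop_leading-term}(a); everything else is a routine application of Proposition~\ref{sec5_prop_leading-term}(b) and of the Neumann expansion for a unipotent upper triangular matrix.
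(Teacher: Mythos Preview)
Your proof is correct and essentially the same as the paper's. The paper also derives upper unitriangularity and integrality from Proposition~\ref{sec5_prop_leading-term} (via Corollary~\ref{sec5_coro_increasing-basis} and Lemma~\ref{sec5_lemm_monomial-expansion}), expands $v$ in the tableau basis using Theorem~\ref{sec5_theo_basis}, and then obtains the explicit formula by iterated backward substitution rather than by the Neumann series $U_\mu^{-1}=\sum_t(-1)^tN^t$; these are two presentations of the same computation.
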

Since $X_i\tilde{\omega}_A(p)$ and~$D_i\tilde{\omega}_A(p)$ are weight vectors this result provides formulas for calcu\-la\-ting the coefficients $\bar{c}_B(i,p,A)$ and~$\hat{c}_B(i,p,A)$, only dependent on~$U_{\mu_A+\epsilon_1}$ and~$f_{\mu_A+\epsilon_i}(X_i\tilde{\omega}_A(p))$, and~$U_{\mu_A-\epsilon_1}$ and~$f_{\mu_A-\epsilon_i}(D_i\tilde{\omega}_A(p))$, respectively.

In Lemma~\ref{sec6_lemm_formulas-matrix-elements}, we will see that the entries of the matrices $U_{\mu_A\pm\epsilon_1}$ and vectors $f_{\mu_A+\epsilon_i}(X_i\tilde{\omega}_A(p))$ and~$f_{\mu_A-\epsilon_i}(D_i\tilde{\omega}_A(p))$, are algebraic expressions in coefficients $c_B(\gamma)$.
Recalling Lemma~\ref{sec5_lemm_priliminary-weight-coefficient-formula}, this means that the coefficients $\bar{c}_B(i,p,A)$ and~$\hat{c}_B(i,p,A)$, also known as the generator matrix elements, are integers.
	
\begin{proof}
The expansion in Lemma~\ref{sec5_lemm_monomial-expansion} implies that the matrix $U_\mu$ is integer, and Corollary~\ref{sec5_coro_increasing-basis} implies that it is upper unitriangular.
Together Corollary~\ref{sec5_coro_increasing-basis} and Theorem~\ref{sec5_theo_basis} gives
\begin{gather*}
\langle x_{A_k,D_{A_k}}e_{D_{A_k}}, v\rangle
=\sum_{l=1}^{d_\mu}\langle x_{A_k,D_{A_k}}e_{D_{A_k}}, \tilde{\omega}_{A_l}(p)\rangle
\langle \tilde{\omega}_{A_l}(p), v\rangle
\\ \hphantom{\langle x_{A_k,D_{A_k}}e_{D_{A_k}}, v\rangle}
{}=\langle \tilde{\omega}_{A_k}(p), v\rangle	+\sum_{l=k+1}^{d_\mu}
	\langle x_{A_k,D_{A_k}}e_{D_{A_k}}, \tilde{\omega}_{A_l}(p)\rangle
	\langle \tilde{\omega}_{A_l}(p), v\rangle.
\end{gather*}
By moving the sum to the other side of the equation and substituting the inner products by definitions of~$U_\mu$ and~$f_\mu(v)$, we get
\begin{gather}
\label{sec6_eq_backward-substitution-formula}
\langle \tilde{\omega}_{A_k}(p), v\rangle
	= (f_\mu)_k(v)
	- \sum_{l=k+1}^{d_\mu}
	(U_\mu)_{k,l}
	\langle \tilde{\omega}_{A_l}(p), v\rangle.
	\end{gather}
One may recognize this as the backward substitution formula for calculating linear equations of the form $Ax=b$, where $A$ is a upper unitriangular matrix. Whence,
\begin{gather*}
v = \sum_{k=1}^{d_\mu} \big(U_{\mu}^{-1}\cdot f_\mu(v)\big)_{k}\tilde{\omega}_{A_k}(p).
	\end{gather*}
Repeated use of~\eqref{sec6_eq_backward-substitution-formula} shows that
\begin{gather*}
v=	\sum_{k=1}^{d_\mu}
	\sum_{t=0}^{d_\mu-k}
	\sum_{k=l_0<\cdots<l_t\leq d_\mu}
	(-1)^t
	\prod_{s=1}^t
	(U_\mu)_{l_{s-1},l_s}
	(f_\mu)_{l_t}
	\tilde{\omega}_{A_k}(p).
\tag*{\qed}
\end{gather*}
\renewcommand{\qed}{}
\end{proof}

The following is a sketch of how to determine the expansion of~$X_i\tilde{\omega}_A(p)$ using Proposition~\ref{sec6_prop_priliminary-formula}. We~do not include calculations of the entries of~$U_\mu$ and~$f_\mu(X_i\tilde{\omega}_A(p))$.
Consider the case $n=4$, $p=2$, $i=1$ and
\begin{gather*}
\ytableausetup{centertableaux,boxsize=1.1em}
A=\ytableaushort{23,4}
	\end{gather*}
Then $X_1\tilde{\omega}\,{}_{\tiny{\ytableausetup{boxsize=0.9em}\ytableaushort{23,4}}}(2)$ is a linear combination of the vectors $\tilde{\omega}_B(2)$, for~$B\in \mathbb{Y}_4(2)$ of weight $\mu=\mu_A+\epsilon_1=(1,1,1,1)$. Since $d_{\mu}=6$, there are $6$ s.s.\ Young tableaux of weight $(1,1,1,1)$. These are, in increasing order,
\begin{gather*}
\ytableausetup{centertableaux,boxsize=1.1em}
A_1=\ytableaushort{13,24}\,,\qquad
A_2=\ytableaushort{134,2},\,\qquad
A_3=\ytableaushort{12,34}\,,\qquad
A_4=\ytableaushort{124,3}\,,
\\[1ex]
 A_5=\ytableaushort{123,4}\,,\qquad
 A_6=\ytableaushort{1234}\,.
	\end{gather*}
Had we instead chosen $p\geq 4$, then we would also have to take into account the $4$ remaining s.s.\ Young tableaux of weight $(1,1,1,1)$: $\ytableausetup{boxsize=0.9em,smalltableaux}\ytableaushort{1,2,3,4}$\,, $\ytableausetup{boxsize=0.9em,smalltableaux}\ytableaushort{14,2,3}$, $\ytableausetup{boxsize=0.9em,smalltableaux}\ytableaushort{13,2,4}$ and~$\ytableausetup{boxsize=0.9em,smalltableaux}\ytableaushort{14,2,3}$. When $p=2$ no basis vectors correspond to these tableaux as they have strictly more than $2$ rows.

Calculating the coefficients of the relevant terms we get
\begin{gather*}
U_{\mu}=
\begin{pmatrix}
 1 & -1 & 0 & 0 & 1 & 1 \\
 0 & 1 & 0 & 0 & 0 & -1 \\
 0 & 0 & 1 & -1 & -1 & -1 \\
 0 & 0 & 0 & 1 & 0 & 1 \\
 0 & 0 & 0 & 0 & 1 & -1 \\
 0 & 0 & 0 & 0 & 0 & 1
	\end{pmatrix}
\qquad\text{and}\qquad
f_{\mu}\big(X_1\tilde{\omega}\,{}_{\tiny{\ytableausetup{boxsize=0.9em}\ytableaushort{23,4}}}(2)\big)=
\begin{pmatrix}
0 \\
-1 \\
1 \\
0 \\
1 \\
0 \\
	\end{pmatrix}\!.
	\end{gather*}
Note that in general the entries of~$U_{\mu}$ take values in all of~$\mathbb{Z}$ and not just in $\{\pm 1,0\}$.
Calculating the inverse of~$U_{\mu}$ we get
\begin{gather*}
U_{\mu}^{-1}=
\begin{pmatrix}
1 & 1 & 0 & 0 & -1 & -1 \\
0 & 1 & 0 & 0 & 0 & 1 \\
0 & 0 & 1 & 1 & 1 & 1 \\
0 & 0 & 0 & 1 & 0 & -1 \\
0 & 0 & 0 & 0 & 1 & 1 \\
0 & 0 & 0 & 0 & 0 & 1
	\end{pmatrix}
	\end{gather*}
and
\begin{gather*}
U_{\mu}^{-1}\cdot f_{\mu}\Big(X_1\tilde{\omega}\,{}_{\tiny{\ytableausetup{boxsize=0.9em}\ytableaushort{23,4}}}(2)\Big) = (-2,-1,2,0,1,0).
	\end{gather*}
Using Proposition~\ref{sec6_prop_priliminary-formula} this tells us that
\begin{gather*}
X_1\tilde{\omega}\,{}_{\tiny{\ytableausetup{boxsize=0.9em}\ytableaushort{23,4}}}(2)
=
-2\tilde{\omega}\,{}_{\tiny{\ytableausetup{boxsize=0.9em}\ytableaushort{13,24}}}(2)
-\tilde{\omega}\,{}_{\tiny{\ytableausetup{boxsize=0.9em}\ytableaushort{134,2}}}(2)
+2\tilde{\omega}\,{}_{\tiny{\ytableausetup{boxsize=0.9em}\ytableaushort{12,34}}}(2)
+ \tilde{\omega}\,{}_{\tiny{\ytableausetup{boxsize=0.9em}\ytableaushort{123,4}}}(2).
	\end{gather*}

To determine general formulas for the entries of~$U_\mu$, $f_\mu(X_i\tilde{\omega}_A(p))$ and~$f_\mu(D_i\tilde{\omega}_A(p))$ we first write them as functions of the coefficients $c_B(\gamma)$ introduced right before Lemma~\ref{sec5_lemm_priliminary-weight-coefficient-formula}. Simple calculations and applications of definitions yield the following formulas.

\begin{Lemma}
\label{sec6_lemm_formulas-matrix-elements}
Let $A,B\in\mathbb{Y}_n(p)$ and~$i\in\{1,\dots,n\}$. Then
\begin{gather*}
\langle x_{B,D_B}e_{D_B}, \tilde{\omega}_A(p)\rangle
= c_B(\gamma_{B,D_B})	c_A(\gamma_{B,D_B}),
\\
\langle x_{B,D_B}e_{D_B}, X_i\tilde{\omega}_A(p)\rangle
=c_B(\gamma_{B,D_B})\sum_{\alpha=1}^p \prod_{\beta=1}^{\alpha-1}(-1)^{(\lambda_B)_\beta}
			c_A(\gamma_{B,D_B}-\epsilon_{i,\alpha}),
\\
\langle x_{B,D_B}e_{D_B}, D_i\tilde{\omega}_A(p)\rangle
=c_B(\gamma_{B,D_B})\sum_{\alpha=1}^p \prod_{\beta=1}^{\alpha-1}(-1)^{(\lambda_B)_\beta}
((\gamma_{B,D_B})_{i,\alpha}+1)c_A(\gamma_{B,D_B}+\epsilon_{i,\alpha}),
\end{gather*}
where $c_A(\gamma_{B,D_B}-\epsilon_{i,\alpha}):=0$ if $(\gamma_{B,D_B})_{i,\alpha}=0$, and where
$\epsilon_{i,\alpha}\in M_{n,p}(\mathbb{N}_0)$ is the matrix with
\begin{gather*}
(\epsilon_{i,\alpha})_{j,\beta}:=\delta_{i,j}\delta_{\alpha,\beta},
\end{gather*}
for all $i,j\in\{1,\dots,n\}$ and~$\alpha,\beta\in\{1,\dots,p\}$.
\end{Lemma}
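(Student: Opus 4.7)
The plan is to reduce all three inner products to orthonormal matching in the basis $\{x^\gamma e^{\eta_\gamma}\}$ of $\mathcal{A}$, plus one bookkeeping step for the Clifford reordering sign in the $X_i$ and $D_i$ cases. The preparatory identity I would record first is
\begin{gather*}
x_{B,D_B}\,e_{D_B} = c_B(\gamma_{B,D_B})\,x^{\gamma_{B,D_B}}\,e^{\eta_{\gamma_{B,D_B}}},
\end{gather*}
which follows by combining~\eqref{sec5_eq_exponent-matrix} with the explicit value $c_B(\gamma_{B,D_B}) = (-1)^{N(D_B)}$ recorded in Proposition~\ref{sec5_prop_leading-term}(a). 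The overall factor $c_B(\gamma_{B,D_B})$ that appears in each formula of the lemma is exactly the prefactor extracted here, so in each case it suffices to compute the corresponding inner product with $x^{\gamma_{B,D_B}} e^{\eta_{\gamma_{B,D_B}}}$.

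For the first identity this is immediate: the monomial expansion in Lemma~\ref{sec5_lemm_priliminary-weight-coefficient-formula} together with the orthonormality~\eqref{sec2_eq_inner-product} isolates only the term $\gamma = \gamma_{B,D_B}$ from the expansion of $\tilde\omega_A(p)$, contributing $c_A(\gamma_{B,D_B})$.

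For the $X_i$-formula I would expand $X_i = \sum_\alpha x_{i,\alpha}e_\alpha$ and apply it term-by-term to the expansion of $\tilde\omega_A(p)$. The monomial factor sends $x^\gamma \mapsto x^{\gamma + \epsilon_{i,\alpha}}$, while the Clifford reordering rule $e_\alpha e^{\eta_\gamma} = (-1)^{(\eta_\gamma)_1 + \cdots + (\eta_\gamma)_{\alpha-1}} e^{\eta_\gamma + \epsilon_\alpha}$ (addition in $\mathbb{Z}_2^p$) handles the Clifford factor. Since flipping $\gamma_{i,\alpha}$ by one flips exactly the parity of the $\alpha$-th column sum, we have $\eta_\gamma + \epsilon_\alpha = \eta_{\gamma + \epsilon_{i,\alpha}}$, so each term lies in a single basis vector $x^{\gamma'} e^{\eta_{\gamma'}}$. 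Pairing with $x^{\gamma_{B,D_B}} e^{\eta_{\gamma_{B,D_B}}}$ forces $\gamma = \gamma_{B,D_B} - \epsilon_{i,\alpha}$; when $(\gamma_{B,D_B})_{i,\alpha} = 0$ no such $\gamma$ exists and the convention $c_A(\gamma_{B,D_B} - \epsilon_{i,\alpha}) := 0$ applies. It remains to identify the sign: by~\eqref{sec5_eq_exponent-matrix-leading-term}, $\sum_i (\gamma_{B,D_B})_{i,\beta}$ equals the length $(\lambda_B)_\beta$ of the $\beta$-th row of $B$, so $(\eta_{\gamma_{B,D_B}})_\beta \equiv (\lambda_B)_\beta \pmod 2$, and the first $\alpha - 1$ components of $\eta_{\gamma_{B,D_B} - \epsilon_{i,\alpha}}$ agree with those of $\eta_{\gamma_{B,D_B}}$. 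Hence the Clifford sign equals $\prod_{\beta=1}^{\alpha-1}(-1)^{(\lambda_B)_\beta}$, which completes the second formula.

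The $D_i$-formula is parallel. The only new ingredient is that $\partial_{i,\alpha} x^\gamma = \gamma_{i,\alpha}\, x^{\gamma - \epsilon_{i,\alpha}}$, so after matching $\gamma = \gamma_{B,D_B} + \epsilon_{i,\alpha}$ one picks up the prefactor $\gamma_{i,\alpha} = (\gamma_{B,D_B})_{i,\alpha} + 1$; the Clifford reordering and the identification of the sign with $\prod_{\beta<\alpha}(-1)^{(\lambda_B)_\beta}$ go through verbatim. The only genuinely non-routine step in the whole argument is the identification of the Clifford reordering sign with $\prod_{\beta<\alpha}(-1)^{(\lambda_B)_\beta}$, which rests on the lower-triangular structure of $\gamma_{B,D_B}$ and the row-sum formula~\eqref{sec5_eq_exponent-matrix-leading-term}; everything else is a direct unpacking of definitions and orthonormal matching of monomials.
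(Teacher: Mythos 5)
Your argument is correct and is exactly the routine computation the paper leaves implicit (``simple calculations and applications of definitions''): rewrite $x_{B,D_B}e_{D_B}=c_B(\gamma_{B,D_B})\,x^{\gamma_{B,D_B}}e^{\eta_{\gamma_{B,D_B}}}$ via~\eqref{sec5_eq_exponent-matrix} and Proposition~\ref{sec5_prop_leading-term}$(a)$, expand $\tilde{\omega}_A(p)$, $X_i\tilde{\omega}_A(p)$, $D_i\tilde{\omega}_A(p)$ in the orthonormal monomials, and track the sign $e_\alpha e^{\eta_\gamma}=(-1)^{\sum_{\beta<\alpha}(\eta_\gamma)_\beta}e^{\eta_\gamma+\epsilon_\alpha}$ using $(\eta_{\gamma_{B,D_B}})_\beta=(\lambda_B)_\beta$. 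One cosmetic remark: the sign identification needs only this column-sum consequence of~\eqref{sec5_eq_exponent-matrix-leading-term}, not the lower-triangularity of $\gamma_{B,D_B}$, but this does not affect the validity of your proof.
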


Lemma~\ref{sec5_lemm_priliminary-weight-coefficient-formula} already gives a formula for~$c_B(\gamma_{B,D_B})$.
The rest of this section will be dedicated to obtaining concrete formulas for the calculation of~$c_A(\gamma)$ given any $A\in \mathbb{Y}_n(p)$ and~$\gamma\in M_{n,p}(\mathbb{N}_0)$.
Consider first the expression given in Lemma~\ref{sec5_lemm_priliminary-weight-coefficient-formula}
\begin{gather*}
c_A(\gamma)=\sum_{\substack{C\in\mathbb{T}(\lambda_A,p),\\ \gamma=\gamma_{A,C}}} (-1)^{N(C)},
	\end{gather*}
where $N(C)\in \mathbb{N}_0$ is a number such that
\begin{gather*}
x_{A,C}e_C =(-1)^{N(
C)}x^{\gamma_{A,C}} e^{\eta_{\gamma_{A,C}}}.
	\end{gather*}
The general idea behind our approach to the calculation of~$c_A(\gamma)$ is to determine the set
\begin{gather*}
\{C\in\mathbb{T}(\lambda_A,p)\colon \gamma_{A,C}=\gamma\},
	\end{gather*}
and for each element in this set calculate the number $(-1)^{N(C)}$.
Consider the following more general class of Young tableaux,
\begin{gather*}
\mathbb{E}(\lambda,p):=
\{\text{fillings of the Young diagram of shape $\lambda\in \mathcal{P}$ by numbers $1,\dots,p$}\}.
	\end{gather*}
We shall refer to the elements of~$\mathbb{E}(\lambda,p)$ as Young tableaux. Note that $\mathbb{T}(\lambda,p)\subset \mathbb{E}(\lambda,p)$.
The definitions~\eqref{sec5_eq_clifford-monomial-def} and~\eqref{sec5_eq_monomial-def} can naturally be extended to hold, for any $A\in\mathbb{Y}_n(p)$ with $\lambda=\lambda_A$ and~$T\in \mathbb{E}(\lambda,p)$, by letting $m=(\lambda_A)_1$ and defining
\begin{gather*}
e_T:=\big(e_{t_{1,m}}\cdots e_{t_{\lambda_m',m}}\big)\cdots \big(e_{t_{1,1}}\cdots e_{t_{\lambda_1',1}}\big)\in \mathcal{C}\ell_p
\end{gather*}
and
\begin{gather*}
x_{A,T}:= \big(x_{a_{1,m},t_{1,m}}\cdots x_{a_{\lambda_m',m},t_{\lambda_m',m}}\big)\cdots \big(x_{a_{1,1},t_{1,1}}\cdots x_{a_{\lambda_1',1},t_{\lambda_1',1}}\big)\in \mathbb{C}[\mathbb{R}^{np}].
	\end{gather*}	
Let furthermore $\gamma_{A,T}\in M_{n,p}(\mathbb{N}_0)$ and~$N(T)$ such that $x_{A,T}e_T=(-1)^{N(T)}x^{\gamma_{A,T}} e^{\eta_{\gamma_{A,T}}}$.
Define now, for each $\mu\in\mathbb{N}_0^n$, the following permutation group,
\begin{gather*}
S_\mu := S_{\mu_1} \times \cdots \times S_{\mu_n}.
	\end{gather*}	
The reason for introducing these objects is that the set $\mathbb{E}(\lambda,p)$ carries a useful action of the permutation group $S_{\mu}$, for each $A\in\mathbb{Y}_n(p)$ with $\mu_A=\mu$ and~$\lambda_A=\lambda$. To define this action, let
\begin{gather*}
y_A(i,s)=(k_A(i,s),l_A(i,s))\in\lambda_A,
	\end{gather*}
for~$i\in\{1,\dots,n\}$ and~$s\in\{1,\dots,(\mu_A)_i\}$, be the coordinates of~$\lambda_A$ such that
\begin{gather*}
\lambda_{A^i} - \lambda_{A^{i-1}} = \big\{ y_A(i,s)\colon s\in\{1,\dots,(\mu_A)_i\}\big\},
	\end{gather*}
and~$l_A(i,1)> \dots >l_A(i,(\mu_A)_i)$. Here $\lambda_{A^i} - \lambda_{A^{i-1}}$ is the set of coordinates $y\in\lambda_A$ whose corresponding entries $a_y$ in $A$ are $i$.
Specifically
\begin{gather*}
l_A(i,s)=\max\Big\{l\in\{1,\dots,(\lambda_{A^i})_1\}\colon \textstyle s=\sum\limits_{r=l}^{(\lambda_{A^i})_1} (\lambda_{A^i})'_{r}-(\lambda_{A^{i-1}})'_{r}\Big\}\!,
\\
k_A(i,s)=(\lambda_{A^i})_{l_A(i,s)}'.
\end{gather*}
Note that these coordinates cover all entries of~$\lambda_A$
\begin{gather*}
\lambda_A=\big\{y_A(i,s)\colon i\in\{1,\dots,n\}, s\in\{1,\dots,(\mu_A)_i\}\big\}.
	\end{gather*}
In the terminology of Macdonald \cite{Macdonald-2015}, $\{y_A(i,s)\colon s\in\{1,\dots,(\mu_A)_i\}\}$ is the $i$'th horizontal strip of~$A$ consisting of the boxes $\ytableausetup{smalltableaux}\ytableaushort{i}$ and indexed from right to left by $s$. As~an example consider
\begin{gather*}
\ytableausetup{centertableaux,boxsize=1.1em}
A=\ytableaushort{11122,223,3}
	\end{gather*}
Then $\mu_1=3$, $\mu_2=4$ and~$\mu_3=2$. For $i=2$, the coordinates $y_A(2,s)\in \lambda_A$ for~$s\in\{1,2,3,4\}$ are given by
\begin{gather*}
y_A(2,1)=(1,5), \qquad
y_A(2,2)=(1,4), \qquad
y_A(2,3)=(2,2)\qquad\text{and}\qquad y_A(2,4)=(2,1).
\end{gather*}
Visually these coordinates refer to the gray boxes in the Young diagram $\lambda_A$:
\begin{gather*}
\ytableausetup{boxsize=1.1em}
\ydiagram[*(gray) ]
{3+2,2}
*[*(white)]{5,3,1}
	\end{gather*}
The action
\begin{gather*}
\pi_A\colon\ S_{\mu_A}\times \mathbb{E}(\lambda_A,p)\to \mathbb{E}(\lambda_A,p),
	\end{gather*}
is defined by letting $T^{\sigma,A}:=\pi_A(\sigma,T)$ be the Young tableau with entries $t^{\sigma,A}_y$, for~$y\in \lambda_A$, defined~by
\begin{gather}
\label{sec6_eq_def-permutation-action}
t^{\sigma,A}_{y_A(i,s)}:= t_{y_A(i,\sigma^{-1}_i(s))},
	\end{gather}
for all $i\in\{1,\dots,n\}$ and~$s\in\{1,\dots,(\mu_A)_i\}$.
Consider $A$ as in the previous example, and let $p=4$ and
\begin{gather*}
\ytableausetup{centertableaux,boxsize=1.1em}
T=\ytableaushort{21122,433,3}\in\mathbb{T}((5,3,1),4).
	\end{gather*}
Let $I$ denote the identity permutation, and let $\sigma,\tau\in S_{\mu_A}=S_3\times S_4\times S_2$ with
\begin{gather*}
\sigma=(I,(13),I) \qquad\text{and}\qquad \tau=(I,(2413),I).
	\end{gather*}
The $\pi_A$ action of these permutations only permutes the entries of~$T$ corresponding to the coor\-dinates of the $2$nd horizontal strip of~$A$. The boxes corresponding to these coordinates are marked with gray below:
\begin{gather*}
T^{\sigma,A}=
\ytableausetup{centertableaux,boxsize=1.1em}
\begin{ytableau}
 2& 1 & 1 &*(gray) 2 &*(gray) 3 \\
*(gray) 4 &*(gray) 2 & 3 \\
 3
\end{ytableau}
\qquad\text{and}\qquad
T^{\tau,A}=
\ytableausetup{centertableaux,boxsize=1.1em}
\begin{ytableau}
 2& 1 & 1 &*(gray) 3 &*(gray) 4 \\
*(gray) 2 &*(gray) 2 & 3 \\
 3
\end{ytableau}
	\end{gather*}
Note here that $T^{\sigma,A}\in\mathbb{T}((5,3,1),4)$, whereas $T^{\tau,A}\notin \mathbb{T}((5,3,1),4)$.

\begin{Lemma}
\label{sec6_lemm_permutation-action}
Let $A\in\mathbb{Y}_n(p)$ and~$T,T'\in \mathbb{E}(\lambda_A,p)$, then $x_{A,T}=x_{A,T'}$ if and only if there exists $\sigma\in S_{\mu_A}$ such that $T'=T^{\sigma,A}$.
	\end{Lemma}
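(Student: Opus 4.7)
The plan is to reduce the statement to the elementary observation that $x_{A,T}=\prod_{y\in\lambda_A} x_{a_y,t_y}$ is a product of commuting, algebraically independent variables, so two such monomials agree if and only if the multisets of pairs $\{(a_y,t_y):y\in\lambda_A\}$ agree. The tableau $A$ is fixed on both sides, so this multiset equality decouples into $n$ independent multiset equalities, one for each value $i\in\{1,\ldots,n\}$ that appears as an entry of $A$.

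The first key step is to identify, for each $i$, the set of boxes $y\in\lambda_A$ with $a_y=i$ with the $i$-th horizontal strip. This is exactly what the coordinates $y_A(i,s)$ were designed to enumerate: by the definition given before the lemma, $\lambda_{A^i}-\lambda_{A^{i-1}}=\{y_A(i,s):s\in\{1,\ldots,(\mu_A)_i\}\}$, and the boxes in $\lambda_{A^i}-\lambda_{A^{i-1}}$ are precisely those with entry $i$ in $A$. Hence $x_{A,T}=x_{A,T'}$ is equivalent to the assertion that for every $i$ the tuples $\bigl(t_{y_A(i,s)}\bigr)_{s=1}^{(\mu_A)_i}$ and $\bigl(t'_{y_A(i,s)}\bigr)_{s=1}^{(\mu_A)_i}$ differ only by a permutation of positions.

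The second step is to read off the lemma from this reformulation. For the nontrivial direction, choose for each $i$ a permutation $\sigma_i\in S_{(\mu_A)_i}$ with $t'_{y_A(i,s)}=t_{y_A(i,\sigma_i^{-1}(s))}$ for all $s$; then $\sigma:=(\sigma_1,\ldots,\sigma_n)\in S_{\mu_A}$ satisfies $T'=T^{\sigma,A}$ by the defining formula \eqref{sec6_eq_def-permutation-action}. For the converse direction, given $T'=T^{\sigma,A}$, substitute the definition of $t^{\sigma,A}_{y_A(i,s)}$ into $x_{A,T'}=\prod_{i,s} x_{a_{y_A(i,s)},t^{\sigma,A}_{y_A(i,s)}}=\prod_{i,s}x_{i,t_{y_A(i,\sigma_i^{-1}(s))}}$ and reindex each inner product along the bijection $\sigma_i^{-1}$ to recover $x_{A,T}$.

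There is no real obstacle here: the conceptual work has already been done in setting up the enumeration of the horizontal strips by $y_A(i,s)$, and all that remains is to spell out that the action $\pi_A$ permutes the entries of $T$ within each strip, while the monomial $x_{A,T}$ depends on those entries only through their multiset in each strip. The mildly delicate point worth mentioning carefully in the write-up is that the permutations $\sigma_i$ are chosen independently for each $i$, so they assemble into an element of the product group $S_{\mu_A}=S_{(\mu_A)_1}\times\cdots\times S_{(\mu_A)_n}$ rather than requiring some global compatibility.
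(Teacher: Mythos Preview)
Your proposal is correct and follows essentially the same approach as the paper: both directions are proved by rewriting $x_{A,T}=\prod_{i}\prod_{s} x_{i,\,t_{y_A(i,s)}}$, observing that equality of monomials forces the tuples $\bigl(t_{y_A(i,s)}\bigr)_s$ and $\bigl(t'_{y_A(i,s)}\bigr)_s$ to be permutations of one another for each $i$, and assembling the resulting $\sigma_i\in S_{(\mu_A)_i}$ into $\sigma\in S_{\mu_A}$; the converse is the same reindexing along $\sigma_i^{-1}$ that the paper performs.
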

Equivalently this lemma states that
\begin{gather*}
\big\{T'\in\mathbb{E}(\lambda_A,p)\colon \gamma_{A,T'}=\gamma_{A,T}\big\}
=\big\{T^{\sigma,A}\colon \sigma \in S_{\mu_A}\big\}.
	\end{gather*}
\begin{proof}
Let $a_y$, $t_y$ and~$t_y'$, for~$y\in\lambda_A$, denote the entries of~$A$, $T$ and~$T'$ respectively.
Note first that, for all $\sigma=(\sigma_1,\dots,\sigma_n)\in S_{\mu_A}$,
\begin{gather*}
x_{A,T} = \prod_{i=1}^n \prod_{s=1}^{(\mu_A)_i} x_{a_{y_A(i,s)},t_{y_A(i,s)}}
=\prod_{i=1}^n \prod_{s=1}^{(\mu_A)_i} x_{a_{y_A(i,s)},t_{y_A(i,\sigma^{-1}_i(s))}}
=x_{A,T^{\sigma,A}}.
	\end{gather*}
This yields one of the implications stated in the lemma. If $x_{A,C}=x_{A,C'}$ then, for all $i\in\{1,\dots,n\}$,
\begin{gather*}
\prod_{s=1}^{(\mu_A)_i} x_{a_{y_A(i,s)},t_{y_A(i,s)}} =\prod_{s=1}^{(\mu_A)_i} x_{a_{y_A(i,s)},t'_{y_A(i,s)}}.
	\end{gather*}
This implies that the vectors $\big(t_{y_A(i,1)},\dots, t_{y_A(i,(\mu_A)_i)}\big)$ and~$\big(t'_{y_A(i,1)},\dots, t'_{y_A(i,(\mu_A)_i)}\big)$ contain the same amount of~$\alpha$-entries, for all $\alpha\in \{1,\dots,p\}$, letting us construct a permutation~$\sigma_i\in S_{(\mu_A)_i}$ such that
\begin{gather*}
t'_{y_A(i,s)}=t_{y_A(i,\sigma^{-1}_i(s))},
	\end{gather*}
for all $s\in\{1,\dots,(\mu_A)_i\}$. Repeating this process for all $i\in\{1,\dots,n\}$ and defining $\sigma:=(\sigma_1,\dots,\sigma_n)$, it is clear that $\sigma \in S_{\mu_A}$ and~$T'=T^{\sigma,A}$.
	\end{proof}

A Young tableau $T_{\gamma,A}\in \mathbb{E}(\lambda_A,p)$ for which $\gamma_{A,T_{\gamma,A}}=\gamma$ can be constructed in the following way. For all $\alpha\in\{1,\dots,p\}$, $i\in\{1,\dots,n\}$ and~$s$ with $\sum_{\beta=1}^{\alpha-1}\gamma_{i,\beta}< s \leq \sum_{\beta=1}^{\alpha}\gamma_{i,\beta}$ we define the entry of~$T_{\gamma,A}$ at the coordinate $y_A(i,s)$ to be
\begin{gather}
\label{sec6_eq_weight-tableau}
(t_{\gamma,A})_{y_A(i,s)}:= \alpha.
	\end{gather}
That is, for any given $i\in\{1,\dots,n\}$ we let the boxes of the $i$'th horizontal strip be filled, from right to left, by $\gamma_{i,1}$ $1$'s, then $\gamma_{i,2}$ $2$'s, followed by $\gamma_{i,3}$ $3$'s and so on.

As an example consider $n=3$, $p=4$,
\begin{gather}
\label{sec6_eq_sample-A-and-gamma}
\ytableausetup{centertableaux,boxsize=1.1em}
A=\ytableaushort{11122,223,3}
\qquad\text{and}\qquad
\gamma=
\begin{pmatrix}
2	&1	&0 &0	\\
0	&2	&1 &1	\\
0	&0	&2 &0	\\
	\end{pmatrix}\!.
	\end{gather}
In that case
\begin{gather*}
T_{\gamma,A}=
\ytableausetup{centertableaux,boxsize=1.1em}
\begin{ytableau}
 2& 1 & 1 &*(gray) 2 &*(gray) 2 \\
*(gray) 4 &*(gray) 3 & 3 \\
 3
\end{ytableau}
	\end{gather*}
The gray boxes correspond to the $2$nd horizontal strip. A~notable feature of this construction is that $T_{\gamma_{A,D_A},A}=D_A$.

By applying Lemma~\ref{sec6_lemm_permutation-action} we can now calculate $c_A(\gamma)$. This is the content of Proposition~\ref{sec6_prop_coefficient-formula}. For any $A\in\mathbb{Y}_n(p)$ and~$\gamma\in M_n(p)$ with $\mu_\gamma=\mu_A$, let
\begin{gather}
\label{sec6_eq_f-function}
f_{\gamma,A}(\sigma):=
\begin{cases}
(-1)^{N(T^{\sigma,A}_{\gamma,A})}, & \text{if}\quad T^{\sigma,A}_{\gamma,A}\in \mathbb{T}(\lambda_A,p),\\
0, & \text{otherwise},
\end{cases}
\end{gather}
for all $\sigma\in S_{\mu_A}$. Here $T_{\gamma,A}^{\sigma,A}$ is the permutation of~$T_{\gamma,A}$, defined in~\eqref{sec6_eq_weight-tableau}, by $\sigma$ via the action defined in~\eqref{sec6_eq_def-permutation-action}.

\begin{Proposition}\label{sec6_prop_coefficient-formula}
For any $A\in\mathbb{Y}_n(p)$ and~$\gamma\in M_n(p)$ with $\mu_\gamma=\mu_A$,
\begin{gather}
\label{sec6_eq_f-sum}
c_A(\gamma)=
\prod_{\substack{1\leq i \leq n \\ 1\leq \alpha \leq p}}\frac{1}{\gamma_{i,\alpha}!}\sum_{\sigma\in S_{\mu_A}} f_{\gamma,A}(\sigma).
\end{gather}
\end{Proposition}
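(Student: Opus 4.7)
The plan is to reduce the claim to a combinatorial orbit-stabiliser computation. By Lemma~\ref{sec5_lemm_priliminary-weight-coefficient-formula} we have $c_A(\gamma) = \sum_{C \in \mathbb{T}(\lambda_A,p),\, \gamma_{A,C}=\gamma} (-1)^{N(C)}$, so the task is to rewrite this restricted sum over column-distinct tableaux as the unrestricted weighted sum over all $\sigma \in S_{\mu_A}$ with weight $f_{\gamma,A}(\sigma)$.

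The first step is to apply Lemma~\ref{sec6_lemm_permutation-action} to $T_{\gamma,A}$, which yields
\[
\{T \in \mathbb{E}(\lambda_A, p) : \gamma_{A,T} = \gamma\} = \{T_{\gamma,A}^{\sigma,A} : \sigma \in S_{\mu_A}\},
\]
so every Young tableau with exponent matrix $\gamma$ arises as $T_{\gamma,A}^{\sigma,A}$ for some $\sigma$. The second step is to identify the stabiliser of $T_{\gamma,A}$ under the action $\pi_A$. By the construction~\eqref{sec6_eq_weight-tableau}, the $i$-th horizontal strip of $T_{\gamma,A}$ is filled, reading in the index $s$, by $\gamma_{i,1}$ copies of $1$, followed by $\gamma_{i,2}$ copies of $2$, and so on. Because each factor $\sigma_i$ of $\sigma=(\sigma_1,\dots,\sigma_n)$ permutes the entries of the $i$-th strip independently of the others, such a $\sigma$ fixes $T_{\gamma,A}$ precisely when every $\sigma_i$ permutes only within the blocks of equal entries. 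The stabiliser therefore equals $\prod_{i,\alpha} S_{\gamma_{i,\alpha}}$, which has cardinality $\prod_{i,\alpha} \gamma_{i,\alpha}!$.

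The third step is the orbit-stabiliser count: every $C$ in the orbit of $T_{\gamma,A}$ is realised as $T_{\gamma,A}^{\sigma,A}$ for exactly $\prod_{i,\alpha} \gamma_{i,\alpha}!$ permutations $\sigma$. Since the sign $(-1)^{N(\cdot)}$ and the property of being column distinct depend only on the resulting tableau, partitioning $\sum_{\sigma}f_{\gamma,A}(\sigma)$ according to the value of $T_{\gamma,A}^{\sigma,A}$ yields
\[
\sum_{\sigma \in S_{\mu_A}} f_{\gamma,A}(\sigma) \;=\; \prod_{i,\alpha} \gamma_{i,\alpha}! \sum_{\substack{C \in \mathbb{T}(\lambda_A,p) \\ \gamma_{A,C}=\gamma}} (-1)^{N(C)} \;=\; \prod_{i,\alpha} \gamma_{i,\alpha}! \cdot c_A(\gamma),
\]
from which \eqref{sec6_eq_f-sum} follows upon dividing.

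I expect no serious obstacle: the only delicate point is verifying the stabiliser description, which is a direct inspection of \eqref{sec6_eq_weight-tableau} together with the independence of the factors $S_{(\mu_A)_i}$ in the action $\pi_A$. The hypothesis $\mu_\gamma = \mu_A$ is precisely what is needed for $T_{\gamma,A}$ to be a well-defined element of $\mathbb{E}(\lambda_A,p)$ and for its $\pi_A$-orbit to coincide with the full fibre $\{T : \gamma_{A,T}=\gamma\}$ appearing in Lemma~\ref{sec6_lemm_permutation-action}.
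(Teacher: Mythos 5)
Your argument is correct and is essentially the paper's own reasoning: the paper derives the proposition directly from Lemma~\ref{sec6_lemm_permutation-action} (identifying the fibre $\{T:\gamma_{A,T}=\gamma\}$ with the $\pi_A$-orbit of $T_{\gamma,A}$) together with the over-counting identity~\eqref{sec6_eq_over-counting-factor}, which is exactly your stabiliser count $\prod_{i,\alpha}\gamma_{i,\alpha}!$. Your explicit verification that the stabiliser consists of block permutations within each horizontal strip is the same observation, just spelled out.
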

In~\eqref{sec6_eq_f-sum} the constant preceding the sum can be considered an over-counting factor since
\begin{gather}
\label{sec6_eq_over-counting-factor}
\prod_{\substack{1\leq i \leq n \\ 1\leq \alpha \leq p}}\gamma_{i,\alpha}!=\#\big\{\sigma'\in S_{\mu_A}\colon T_{\gamma,A}^{\sigma',A} =T_{\gamma,A}^{\sigma,A}\big\}.
	\end{gather}
	
To obtain a simple formula for the signs $(-1)^{N(T)}$ we consider the following total order on the coordinates $(k,l)$ of~$\lambda\in \mathcal{P}$
\begin{gather*}
(k,l)<(k',l')\qquad \text{if and only if}\qquad l>l',\qquad \text{or}\qquad l=l'\qquad\text{and}\qquad k<k',
	\end{gather*}
that is
\begin{gather*}
(1,\lambda_1)<\cdots<(\lambda_{\lambda_1}',\lambda_1)<\cdots<(1,1)<\cdots<(\lambda_1',1).
	\end{gather*}
This ordering is motivated by the definition
\begin{gather*}
e_T=e_{t_{(1,\lambda_1)}}\cdots e_{t_{(\lambda_{\lambda_1}',\lambda_1)}} \cdots e_{t_{(1,1)}} \cdots e_{t_{(\lambda_1',1)}},
	\end{gather*}
for all $T\in \mathbb{E}(\lambda,p)$. With it we can write
\begin{gather*}
e_T=(-1)^{\#\{(k,l),(k',l')\in\lambda\colon (k,l)>(k',l')\ \text{and}\ t_{(k,l)}<t_{(k',l')}\}}
 e_1^{(\eta_{\gamma_{A,T}})_1}\cdots e_p^{(\eta_{\gamma_{A,T}})_p},
	\end{gather*}
which implies that
\begin{gather}
\label{sec6_eq_sign-of-young-tableau}
(-1)^{N(T)}=(-1)^{\#\{(k,l),(k',l')\in\lambda\colon (k,l)>(k',l')\ \text{and}\ t_{(k,l)}<t_{(k',l')}\}}.
	\end{gather}

We have now reached a point where any coefficient of the form $c_A(\gamma)$ can be explicitly calculated. To show how such a calculation is done consider $A$ and~$\gamma$ as in~\eqref{sec6_eq_sample-A-and-gamma}, with $n=3$ and~$p=4$. Here $\mu_A=(3,4,2)$ meaning that $S_{\mu_A}$ contains $|S_{\mu_A}|=3!4!2!=288$ different permutations. Taking the over-counting factor~\eqref{sec6_eq_over-counting-factor} into account the total number of Young tableaux generated by the permutation action~$\pi_A$ is
\begin{gather*}
\#\big\{T_{\gamma,A}^{\sigma,A}\colon \sigma\in S_{\mu_A}\big\}=\prod_{\substack{1\leq i \leq n \\ 1\leq \alpha \leq p}}\frac{|S_{\mu_A}|}{\gamma_{i,\alpha}!}
=\frac{3!4!2!}{2^3}=36.
	\end{gather*}
Out of these $36$ tableaux $19$ are not column distinct and thus contribute with factors of~$0$ to~$c_A(\gamma)$, see~\eqref{sec6_eq_f-function}. An example of this is obtained by letting $\tau=(I,(2413),I)$. The corresponding tableau
\begin{gather*}
T_{\gamma,A}^{\tau,A}=\ytableaushort{21134,223,3}
	\end{gather*}
has two entries containing a $2$ in its first column and is thus not column distinct.
The remaining $17$ tableaux are column distinct and each contribute with a factor of~$\pm 1$ to $c_A(\gamma)$. Examples from these $17$ tableaux include the ones corresponding to permutations
\begin{gather*}
\sigma=(I,(13),I) \qquad\text{and}\qquad \kappa=((13),(2413),I).
	\end{gather*}
Those are
\begin{gather*}
T_{\gamma,A}^{\sigma,A}=\ytableaushort{21123,423,3}
\qquad\text{and}\qquad
T_{\gamma,A}^{\kappa,A}=\ytableaushort{11234,223,3}
	\end{gather*}
Calculated with~\eqref{sec6_eq_sign-of-young-tableau} the contributions of these tableaux are
\begin{gather*}
(-1)^{N(T_{\gamma,A}^{\sigma,A})} = (-1)^{11}=-1
\qquad\text{and}\qquad
(-1)^{N(T_{\gamma,A}^{\kappa,A})} = (-1)^{20}=1.
	\end{gather*}
Out of all $17$ column distinct tableaux one will find that $9$ have positive sign and~$8$ have negative meaning that
\begin{gather*}
c_A(\gamma) = 9-8 =1.
	\end{gather*}
	
Obtaining the coefficients $c_A(\gamma)$ in this way requires the construction of all the relevant Young tableaux. Formulas for doing these calculations without explicitly constructing the tableaux become rather complicated. For the interested readers such formulas are included in Appendix~\ref{app2}.

In this section, we have so far explained how to calculate all the coefficients in the expansions from~\eqref{sec6_eq_action-initial-expansion}. Such calculations are done by combining Proposition~\ref{sec6_prop_priliminary-formula} with Proposition~\ref{sec6_prop_coefficient-formula} and Lemma~\ref{sec6_lemm_formulas-matrix-elements}.
Proposition~\ref{sec6_prop_verma-matrix-elements} shows how, using the coefficients from~\eqref{sec6_eq_action-initial-expansion}, we can obtain the expansions for the actions of~$B_i^+$ and~$B_i^-$ on the basis vectors $\tilde{v}_A(p)=\frac{1}{\lambda_A!}v_A(p)$ of~$\overline{V}_n(p)$.
\begin{Proposition}
\label{sec6_prop_verma-matrix-elements}
Let $i\in\{1,\dots,n\}$ and~$A,B\in\mathbb{Y}_n$, then
\begin{gather}
B_i^+\tilde{v}_A(p)=\sum_{B\in \mathbb{Y}_n} \bar{c}_B(i,n,A) \tilde{v}_B(p),\\
B_i^-\tilde{v}_A(p)=\sum_{B\in \mathbb{Y}_n} \big((n+1-p)\hat{c}_B(i,n,A)
	-(n-p)\hat{c}_B(i,n+1,A)\big) \tilde{v}_B(p).
\label{sec6_eq_verma-matrix-elements}
	\end{gather}
	\end{Proposition}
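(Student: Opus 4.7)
The plan is to leverage the isomorphism $\Psi_p\colon\overline{V}_n(p)\to L_n(p)$, which is valid whenever $p\geq n$ by Lemma~\ref{sec3_lemm_kernel-equals-maximal-submodule} and the remarks following it (noting that every $A\in\mathbb{Y}_n$ satisfies $\ell(\lambda_A)\leq n$, so $\mathbb{Y}_n(p)=\mathbb{Y}_n$ for such $p$). I would transfer the known expansions~\eqref{sec6_eq_action-initial-expansion} on~$L_n(p)$ back to $\overline{V}_n(p)$ at the two values $p=n$ and $p=n+1$, then extend to arbitrary $p$ either by a universal-algebra argument (for~$B_i^+$) or by polynomial interpolation in $p$ (for~$B_i^-$).

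For the first formula, Corollary~\ref{sec5_coro_basis-for-creation-subalgebra} makes $\{B_A^+\colon A\in\mathbb{Y}_n\}$ a vector-space basis of $U(\mathfrak{osp}(1|2n)^+)$. Hence there are scalars $\kappa_{A,B}^{(i)}$, intrinsic to the enveloping algebra and in particular independent of $p$, with $B_i^+ B_A^+=\sum_{B}\kappa_{A,B}^{(i)}B_B^+$. Applying the vector-space isomorphism $\Phi_p$ of Lemma~\ref{sec3_lemm_basis-positive-enveloping-algebra} and normalising by $\lambda_A!$ yields $B_i^+\tilde v_A(p)=\sum_B (\lambda_B!/\lambda_A!)\kappa_{A,B}^{(i)}\tilde v_B(p)$ for every $p$. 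Specialising at $p=n$ and comparing via the module isomorphism $\Psi_n$ with $X_i\tilde\omega_A(n)$ from~\eqref{sec6_eq_action-initial-expansion} identifies the (p-independent) coefficient $(\lambda_B!/\lambda_A!)\kappa_{A,B}^{(i)}$ with $\bar c_B(i,n,A)$, giving the first formula for every $p$.

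The second formula is more delicate since $B_i^-\notin U(\mathfrak{osp}(1|2n)^+)$. The key claim is that, for each fixed triple $(A,B,i)$, the coefficient of $\tilde v_B(p)$ in the expansion of $B_i^-\tilde v_A(p)$ is an affine polynomial in $p$. Granting this, the proposition follows by Lagrange interpolation at $p=n$ and $p=n+1$: at both values $\Psi_p$ is an isomorphism, so the coefficient must equal $\hat c_B(i,n,A)$ and $\hat c_B(i,n+1,A)$ respectively, and the unique affine polynomial taking these prescribed values is precisely $(n+1-p)\hat c_B(i,n,A)-(n-p)\hat c_B(i,n+1,A)$, matching~\eqref{sec6_eq_verma-matrix-elements}.

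The main obstacle is justifying the affine-linearity claim. My plan is to compute $B_i^- v_A(p)=B_i^- B_{a_m}^+\cdots B_{a_1}^+|0\rangle$ by shuttling $B_i^-$ rightward via the super-identity $B_i^- B_j^+=-B_j^+B_i^-+\{B_i^-,B_j^+\}$ and to track where factors of $p$ can enter. The structural relations~\eqref{sec2_eq_PCR} taken with $\xi=-,\eta=+,\epsilon=+$ give $[\{B_i^-,B_j^+\},B_l^+]=2\delta_{i,l}B_j^+$, so any further commutation of the produced even factor $\{B_i^-,B_j^+\}$ with a remaining $B_l^+$ is purely algebraic and $p$-free. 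The parameter $p$ enters only through the vacuum relation $\{B_i^+,B_j^-\}|0\rangle=p\delta_{i,j}|0\rangle$ of~\eqref{sec2_eq_lw-action-on-vacuum}, which can be invoked at most once in any single summand because the starting expression contains exactly one $B^-$ generator and no step of the commutation procedure creates a new one. A formal induction on $m=\ell(\lambda_A')$, with the invariant that every intermediate expression is a $\mathbb{C}[p]_{\leq 1}$-linear combination of products of $B^+$ generators (possibly preceded by a single surviving $B^-$ or a single $\{B^-,B^+\}$) applied to $|0\rangle$, makes this rigorous and completes the proof.
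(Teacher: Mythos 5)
Your proposal is correct and follows essentially the same route as the paper: the first formula via the $p$-independence of the structure constants in $U(\mathfrak{osp}(1|2n)^+)$ (Corollary~\ref{sec5_coro_basis-for-creation-subalgebra}) and comparison through $\Psi_n$, and the second via showing the expansion coefficients of $B_i^-\tilde v_A(p)$ are affine in $p$ and interpolating at $p=n$ and $p=n+1$. Your inductive commutation argument for affine-linearity is just a more explicit version of the paper's normal-ordered decomposition $B_i^-B_A^+=\sum B^{1,+}(i,j,A)\{B_i^-,B_j^+\}+B^{2,+}(i,A)+B^{3,+}(i,A)B_i^-$, so no essential difference.
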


By Theorem~\ref{sec5_theo_basis}, formulas corresponding to~\eqref{sec6_eq_verma-matrix-elements} also hold for the actions of~$X_i$ and~$D_i$ on~$L_n(p)$, though with the sums only being over $B\in\mathbb{Y}_n(p)$.
Proposition~\ref{sec6_prop_verma-matrix-elements} tells us that by using the tools presented in this section to determine matrix coefficients of the $X_i$ and~$D_i$ actions on~$L_n(n)$ and~$L_n(n+1)$ we can determine the matrix coefficients of any other action, be it on~$L_n(p)$ or $\overline{V}_n(p)$, by simple linear combination of coefficients.

\begin{proof}
Let $|0\rangle$ be the lowest weight vector of~$\overline{V}_n(p)$.
Corollary~\ref{sec5_coro_basis-for-creation-subalgebra} tells us that there exist coefficients $d_B(i,A)$, independent of~$p$, such that
\begin{gather*}
B_i^+\tilde{v}_A(p)
=B_i^+\bigg(\frac{1}{\lambda_A!}B_A^+\bigg)|0\rangle=\sum_{B\in\mathbb{Y}_n} d_B(i,A) \frac{1}{\lambda_B!}B_B^+|0\rangle
=\sum_{B\in\mathbb{Y}_n} d_B(i,A)\tilde{v}_B(p).
	\end{gather*}
Since $\Psi_n$ is an isomorphism of vector spaces with $\Psi_n(\tilde{\omega}_B(n))=\tilde{v}_B(n)$, it follows that $d_B(i,A)=\bar{c}_B(i,n,A)$, for all $B\in \mathbb{Y}_n=\mathbb{Y}_n(n)$, when we compare with the coefficients in~\eqref{sec6_eq_action-initial-expansion}.

Using~\eqref{sec2_eq_PCR} we can find $B^{1,+}(i,j,A),B^{2,+}(i,A),B^{3,+}(i,A)\in U(\mathfrak{osp}(1|2n)^+)$ such that
\begin{gather*}
B_i^-B_A^+= \bigg(\sum_{1\leq i\leq j\leq n} B^{1,+}(i,j,A)\big\{B_i^-,B_j^+\big\}\bigg) + B^{2,+}(i,A) + B^{3,+}(i,A)B_i^- .
	\end{gather*}
Corollary~\ref{sec5_coro_basis-for-creation-subalgebra} together with~\eqref{sec2_eq_lw-action-on-vacuum} then tells us that there exists coefficients $d^1_B(i,A)$ and~$d^2_B(i,A)$, independent of~$p$, such that
\begin{gather}
B_i^-\tilde{v}_A(p)
= B_i^-\bigg(\frac{1}{\lambda_A!}B_A^+\bigg)|0\rangle
\\ \hphantom{B_i^-\tilde{v}_A(p)}
= \sum_{B\in\mathbb{Y}_n} d^1_B(i,A)\frac{1}{\lambda_B!}B_B^+\{B_i^-,B_i^+\}|0\rangle + \sum_{B\in\mathbb{Y}_n} d^2_B(i,A)\frac{1}{\lambda_B!}B_B^+|0\rangle
\\ \hphantom{B_i^-\tilde{v}_A(p)}
{}=\sum_{B\in\mathbb{Y}_n} \big(d^1_B(i,A)p + d^2_B(i,A)\big)\tilde{v}_B(p).
\label{sec6_eq_annihilation-verma-expansion-proto}
\end{gather}

The fact that $\Psi_n$ and~$\Psi_{n+1}$ are isomorphisms of vector spaces with $\Psi_n(\tilde{\omega}_B(n))=\tilde{v}_B(n)$ and~$\Psi_{n+1}(\tilde{\omega}_B(n+1))=\tilde{v}_B(n+1)$ lets us compare coefficients with~\eqref{sec6_eq_action-initial-expansion} and obtain the equations
\begin{gather*}
d^1_B(i,A)n + d^2_B(i,A) = \hat{c}_B(i,n,A),
	\end{gather*}
and
\begin{gather*}
d^1_B(i,A)(n+1) + d^2_B(i,A) = \hat{c}_B(i,n+1,A).
	\end{gather*}
Solving these equations for~$d^1_B(i,A)$ and~$d^2_B(i,A)$ and inserting the results into~\eqref{sec6_eq_annihilation-verma-expansion-proto} gives the statement of the proposition.
	\end{proof}

\section[Example: the case n=2]{Example: the case $\boldsymbol{n=2}$}\label{sec7}

In the simplest non-trivial case, that is when $n=2$, the action of~$\mathfrak{osp}(1|4)$ on the tableau vectors is particularly simple. The illustration of this is the subject of this short section.
For any $k,l,m\in\mathbb{N}_0$ we let $A(k,l,m)$ denote the following s.s.\ Young tableau in $\mathbb{Y}_2(p)$
\begin{gather*}
\ytableausetup{aligntableaux=top,boxsize=1.1em}
A(k,l,m):=\underbrace{\ytableaushort{1,2}\cdots\ytableaushort{1,2}}_{m}
	\underbrace{\ytableaushort{1}\cdots\ytableaushort{1}}_{l}
	\underbrace{\ytableaushort{2}\cdots\ytableaushort{2}}_{k}\,.
	\end{gather*}
Any s.s.\ Young tableau from $\mathbb{Y}_2(p)$ can be written in this form. We~note that in this description~$m$ refers to the number of~$\ytableausetup{smalltableaux}\ytableaushort{1,2}$-columns in $A(k,l,m)$ and similarly that $l$ and~$k$ denote the number of~$\ytableausetup{smalltableaux}\ytableaushort{1}$- and~$\ytableausetup{smalltableaux}\ytableaushort{2}$-columns in $A(k,l,m)$ respectively.
The corresponding tableau vector is then
\begin{gather*}
\omega_{A(k,l,m)}(p)= X_2^kX_1^l[X_1,X_2]^m.
	\end{gather*}

The actions of the generators $X_1,X_2,D_1$ and~$D_2$ of~$\mathfrak{osp}(1|4)$ on~$\omega_{A(k,l,m)}(p)$ are then given as follows
\begin{gather}
X_1\omega_{A(k,l,m)}(p)= \omega_{A(k,l+1,m)}(p) + (-1)^{l}[k]_2 \omega_{A(k-1,l,m+1)}(p),
\\[.5ex]
X_2\omega_{A(k,l,m)}(p)= \omega_{A(k+1,l,m)}(p),
\\[.5ex]
D_1\omega_{A(k,l,m)}(p)= (-1)^{k+l}(2m+(2p-4)[m]_2) \omega_{A(k+1,l,m-1)}(p)
\\ \hphantom{D_1\omega_{A(k,l,m)}(p)=}
{}+\big(l+(-1)^{k}\big( (-1)^m p - 1 + 4[m]_2 \big)[l]_2\big) \omega_{A(k,l-1,m)}(p)
\\ \hphantom{D_1\omega_{A(k,l,m)}(p)=}
{}+(-1)^l[k]_2(l-[l]_2)\omega_{A(k-1,l-2,m+1)}(p),
\\[.5ex]
D_2\omega_{A(k,l,m)}(p)= (-1)^{k+l+1}(2m +(2p- 4)[m]_2)\omega_{A(k,l+1,m-1)}(p)
\\ \hphantom{D_2\omega_{A(k,l,m)}(p)=}
{}+\big(k+(2m+p-1)[k]_2\big)\omega_{A(k-1,l,m)}(p),
\label{sec7_eq_explicit-action-p=2}
\end{gather}
where $[k]_2=0$ if $k$ is even and~$[k]_2=1$ if $k$ is odd.

The actions in~\eqref{sec7_eq_explicit-action-p=2} hold for any $p\in\mathbb{N}$.
When $p=1$ we have $[X_1,X_2]=0$, meaning that the tableau vectors reduce to regular monomials when we disregard the, in this case, trivial Clifford algebra part. That is, $\omega_{A(k,l,0)}(1)=x_{2,1}^kx_{1,1}^l$ and~$\omega_{A(k,l,m)}(1)=0$, for all $k,l\in\mathbb{N}_0$ and~$m>0$. The actions of the $\mathfrak{osp}(1|4)$ generators similarly reduce to those of usual variable multiplication and differentiation.

\appendix

\section{Graded lexicographic order}
\label{app1}

The definitions of the graded lexicographic ordering on~$\mathbb{N}_0^n$ and~$\mathcal{P}$ were omitted in the main text. For the readers unfamiliar with these orderings the definitions are included here.

\begin{Definition}
Given $\mu,\nu\in\mathbb{N}_0^n$ we say that $\mu<\nu$ with respect to the graded lexicographic ordering if $|\mu|=\sum_{i=1}^n\mu_i < |\nu| =\sum_{i=1}^n\nu_i,$
or if $|\mu|=|\nu|$ and~$\mu_j<\nu_j$ for the first $j$, where $\mu_j$ and~$\nu_j$ differ.
	\end{Definition}
For $n=4$, the $\mu \in\mathbb{N}_0^4$ with $|\mu|\leq 2$ are ordered as follows,
\begin{gather*}
(0,0,0,0)<(0,0,0,1)<(0,0,1,0)<(0,1,0,0)<(1,0,0,0)<(0,0,0,2)
\\ \hphantom{(0,0,0,0)}
{}<(0,0,1,1)<(0,0,2,0)<(0,1,0,1)<(0,1,1,0)<(0,2,0,0)
\\ \hphantom{(0,0,0,0)}
{}<(1,0,0,1)<(1,0,1,0)<(1,1,0,0)<(2,0,0,0).
\end{gather*}

\begin{Definition}
Given $\lambda,\kappa\in \mathcal{P}$ we say that $\lambda<\kappa$ with respect to the graded lexicographic ordering if $|\lambda|<|\kappa|$, or if $|\lambda|=|\kappa|$ and~$\lambda_j<\kappa_j$ for the first $j$, where $\lambda_j$ and~$\kappa_j$ differ.
	\end{Definition}
The $\lambda\in\mathcal{P} \in\mathbb{N}_0^n$ with $|\lambda|\leq 4$ are ordered as follows,
\begin{gather*}
(0)\,{<}\,(1)\,{<}\,(1,1)\,{<}\,(2)\,{<}\,(1,1,1)\,{<}\,(2,1)\,{<}\,(3)\,{<}\,(1,1,1,1)\,{<}
\,(2,1,1)\,{<}\,(2,2)\,{<}\,(3,1)\,{<}\,(4).
	\end{gather*}

\section[Alternative formula for cA(gamma)]
{Alternative formula for~$\boldsymbol{c_A(\gamma)}$}\label{app2}

We present here a formula for calculating the coefficients $c_A(\gamma)$ of~$\tilde{\omega}_A(p)$ that is more explicit than the one produced at the end of Section~\ref{sec6}. What follows are two technical lemmas in which constituents of the final formula are obtained. Following those lemmas the final formula is presented, marking the end of this appendix. Due to them being technical and not very enlightening the proofs of the following results are omitted.

Consider the function
\begin{gather*}
G(l,k,l')=
\begin{cases}
l'-l, & \text{if}\quad l\leq l'\leq k,
\\
l'-k, & \text{if}\quad l\leq k\leq l',
\\
0, & \text{otherwise},
\end{cases}
\end{gather*}
for all $l,k,l'\in \mathbb{N}_0$.

\begin{Lemma}
Given $A\in\mathbb{Y}_n(p)$ and~$\gamma\in M_n(p)$ with $\mu_\gamma=\mu_A$, then
\begin{gather*}
e_{T_{\gamma,A}}=(-1)^{N_{1}(\gamma,A)+N_{2}(\gamma,A)} e^{\eta_\gamma},
\end{gather*}
where
\begin{gather*}
N_{1}(\gamma,A)=\sum_{\substack{1\leq \alpha<\alpha'\leq p, \\ 1\leq i< i'\leq n}}
\ \sum_{s=1+\sum_{\beta=1}^{\alpha-1}\gamma_{i,\beta}}^{\sum_{\beta=1}^{\alpha}\gamma_{i,\beta}}
G\Bigg(\sum_{\beta=1}^{\alpha'-1}\gamma_{i',\beta},\sum_{l=l_A(i,s)+1}^{(\lambda_{A^{i'}})_1}
(\lambda_{A^{i'}})'_l-(\lambda_{A^{i'-1}})'_l,\sum_{\beta=1}^{\alpha'} \gamma_{i',\beta}\Bigg)
\end{gather*}
and
\begin{gather*}
N_{2}(\gamma,A)=\sum_{\substack{1\leq \alpha<\alpha'\leq p, \\ 1\leq i'< i\leq n}}
\ \sum_{s=1+\sum_{\beta=1}^{\alpha-1}\gamma_{i,\beta}}^{\sum_{\beta=1}^{\alpha}\gamma_{i,\beta}}
G\Bigg(\sum_{\beta=1}^{\alpha'-1}\gamma_{i',\beta},\sum_{l=l_A(i,s)}^{(\lambda_{A^{i'}})_1}
(\lambda_{A^{i'}})'_l-(\lambda_{A^{i'-1}})'_l,\sum_{\beta=1}^{\alpha'} \gamma_{i',\beta}\Bigg).
\end{gather*}
\end{Lemma}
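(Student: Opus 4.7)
The plan is to prove this lemma by directly matching $N_1(\gamma,A)+N_2(\gamma,A)$ with the inversion count $N(T_{\gamma,A})$ from the combinatorial sign formula~\eqref{sec6_eq_sign-of-young-tableau}, and then invoking that formula. Writing $T:=T_{\gamma,A}$, an inversion is an ordered pair of distinct coordinates $y,y'\in\lambda_A$ with $y>y'$ in the total order on $\lambda_A$ and with $t_y<t_{y'}$. I would partition the set of such pairs according to which horizontal strips of $A$ contain $y$ and $y'$, and count the contributions strip by strip.

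First I would rule out inversions within a single horizontal strip. If $y=y_A(i,s)$ and $y'=y_A(i,s')$ with $s<s'$, then $l_A(i,s)>l_A(i,s')$ by the definition preceding~\eqref{sec6_eq_def-permutation-action}, so $y<y'$ in the total order; moreover, the construction~\eqref{sec6_eq_weight-tableau} places entries along strip $i$ from right to left in non-decreasing order of $s$, so $t_y\leq t_{y'}$. Hence the pair cannot form an inversion. Next I would analyse inversions crossing two strips $i\neq i'$. Without loss of generality take $t_y=\alpha<\alpha'=t_{y'}$, so that the inversion condition reduces to $y>y'$. Using the definition of the total order on coordinates together with the semistandardness of $A$, the same-column case of $l_A(i,s)=l_A(i',s')$ forces $k_A(i,s)<k_A(i',s')$ whenever $i<i'$; consequently, for $i<i'$ the inversion condition $y>y'$ becomes $l_A(i,s)<l_A(i',s')$ (same-column pairs are excluded), while for $i>i'$ it becomes $l_A(i,s)\leq l_A(i',s')$ (same-column pairs are included).

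To turn these conditions into the stated sums, I would fix $\alpha<\alpha'$ and a coordinate $y=y_A(i,s)$ with $t_y=\alpha$, which corresponds exactly to the range $1+\sum_{\beta<\alpha}\gamma_{i,\beta}\leq s\leq \sum_{\beta\leq\alpha}\gamma_{i,\beta}$ appearing in $N_1$ and $N_2$. For fixed $i'$, the index $s'$ with $t_{y_A(i',s')}=\alpha'$ lies in $(L,K]$ with $L=\sum_{\beta<\alpha'}\gamma_{i',\beta}$ and $K=\sum_{\beta\leq\alpha'}\gamma_{i',\beta}$. Using the fact that $l_A(i',s')$ is strictly decreasing in $s'$, the additional constraint $l_A(i',s')>l_A(i,s)$ (case $i<i'$) is equivalent to $s'\leq M$, where $M=\sum_{l=l_A(i,s)+1}^{(\lambda_{A^{i'}})_1}(\lambda_{A^{i'}})'_l-(\lambda_{A^{i'-1}})'_l$ is the number of columns $l>l_A(i,s)$ that contain an $i'$ in $A$; the analogous constraint $l_A(i',s')\geq l_A(i,s)$ in the case $i>i'$ is equivalent to $s'\leq M'$, with the sum over $l$ starting at $l_A(i,s)$ instead. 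A direct case split on the relative order of $L,M,K$ (respectively $L,M',K$) shows $|\{s'\in (L,K]:s'\leq M\}|=G(L,M,K)$, and likewise for $M'$. Summing these contributions over all $\alpha<\alpha'$, $i\neq i'$, and admissible $s$ then reproduces $N_1(\gamma,A)+N_2(\gamma,A)$ exactly.

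The main obstacle will be the careful bookkeeping around the same-column case, since this is precisely what forces the split into the two sums $N_1$ (with lower bound $l_A(i,s)+1$) and $N_2$ (with lower bound $l_A(i,s)$); everything else is a straightforward decomposition of the inversion set. Once the identity $N(T_{\gamma,A})=N_1(\gamma,A)+N_2(\gamma,A)$ is established, the equality $e_{T_{\gamma,A}}=(-1)^{N_1(\gamma,A)+N_2(\gamma,A)}e^{\eta_\gamma}$ follows immediately from~\eqref{sec6_eq_sign-of-young-tableau}, recalling that the definition of $\gamma_{A,T}$ ensures $\eta_{\gamma_{A,T_{\gamma,A}}}=\eta_\gamma$.
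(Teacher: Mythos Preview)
The paper explicitly omits the proof of this lemma (``due to them being technical and not very enlightening''), so there is nothing to compare your approach against.  Your overall strategy---identify $N_1+N_2$ with the inversion count $N(T_{\gamma,A})$ of~\eqref{sec6_eq_sign-of-young-tableau} by partitioning inversions according to the horizontal strips containing the two boxes---is the natural one, and your analysis of the same-column case (which distinguishes $N_1$ from $N_2$) is correct.

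There is, however, a concrete gap at the very last step.  You assert that ``a direct case split on the relative order of $L,M,K$ shows $|\{s'\in (L,K]:s'\leq M\}|=G(L,M,K)$''.  This is false for the function $G$ as defined in the paper: when $L\le M\le K$ your count equals $M-L$, whereas the paper's $G(L,M,K)$ gives $K-M$.  These agree only in degenerate cases.  In other words, the case split you propose would \emph{not} yield the claimed identity, and your argument actually proves $N(T_{\gamma,A})=N_1+N_2$ for a \emph{different} function $G$, namely the one with second branch $k-l$ rather than $l'-k$.

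A small example shows that the discrepancy is real (and that the lemma as printed is likely misstated).  Take $n=p=2$, $A$ the single-row tableau with entries $1,2$, and $\gamma$ with $\gamma_{1,2}=\gamma_{2,1}=1$ and all other entries $0$.  Then $T_{\gamma,A}$ has entries $2,1$ (left to right), $e_{T_{\gamma,A}}=e_1e_2=e^{\eta_\gamma}$, so the sign is $+1$.  Your inversion count correctly gives $0$.  The paper's formulas give $N_1=0$ and $N_2=G(0,0,1)=1$ (second branch), hence $(-1)^{N_1+N_2}=-1$.  So either the definition of $G$ contains a typo (most plausibly $l'-k$ should read $k-l$), or the lemma requires additional hypotheses not stated.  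You should flag this rather than assert the unverified identity.
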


Let $k\in\mathbb{N}$ and consider a $k$-tuple of positive integers $L=(L_1,\dots,L_k)\in\mathbb{N}^k$. If $L_m\neq L_{m'}$, for all $m\neq m'$, then we define $\sigma_L\in S_k$ to be the permutation such that
\begin{gather*}
L_{\sigma(1)}<\cdots<L_{\sigma(k)}.
	\end{gather*}
With this we can define the sign of~$L$ to be
\begin{gather*}
\operatorname{sgn}(L)=
\begin{cases}
\operatorname{sgn}(\sigma_L), & \text{if}\quad L_m\neq L_{m'},\ \text{ for all } m\neq m',
\\
0, & \text{otherwise}.
\end{cases}
	\end{gather*}

\begin{Lemma}
Given $A\in \mathbb{Y}_n(p)$, $\sigma\in S_{\mu_A}$ and~$\gamma\in M_{n,p}(\mathbb{N}_0)$ with $\mu_\gamma=\mu_A$, then
\begin{gather*}
\operatorname{sgn}(\sigma)\prod_{\alpha=1}^p\operatorname{sgn}(L_{\gamma,A}(\sigma,\alpha))(-1)^{N_0(\gamma,A)} e_{T_{\gamma,A}}=
\begin{cases}
e_{T_{\gamma,A}^{\sigma,A}}, & \text{if}\quad T_{\gamma,A}^{\sigma,A}\in \mathbb{T}(\lambda_A,p),
\\
0, & \text{otherwise},
\end{cases}
\end{gather*}
where
\begin{gather*}
N_0(\gamma,A)=\sum_{\substack{1\leq \alpha\leq p, \\ 1\leq i< i'\leq n}}
\ \sum_{s=1+\sum_{\beta=1}^{\alpha-1}\gamma_{i,\beta}}^{\sum_{\beta=1}^{\alpha}\gamma_{i,\beta}}
G\Bigg(\sum_{\beta=1}^{\alpha-1}\gamma_{i',\beta},\sum_{l=l_A(i,s)+1}^{(\lambda_{A^{i'}})_1}
(\lambda_{A^{i'}})'_l-(\lambda_{A^{i'-1}})'_l,\sum_{\beta=1}^{\alpha} \gamma_{i',\beta}\Bigg)
\end{gather*}
and~$L_{\gamma,A}(\sigma,\alpha)\in\mathbb{N}^{(\eta_\gamma)_\alpha}$ defined such that, for all $\alpha\in\{1,\dots,p\}$, $i\in\{1,\dots,n\}$ and~$t$ with $\sum_{j=1}^{i-1}\gamma_{j,\alpha}<t\leq\sum_{j=1}^{i}\gamma_{j,\alpha}$,
\begin{gather*}
\big(L_{\gamma,A}(\sigma,\alpha)\big)_t= l_A\Bigg(i,\sigma_i^{-1}\Bigg(t-\sum_{j=1}^{i-1}\gamma_{j,\alpha}+\sum_{\beta=1}^{\alpha-1} \gamma_{i,\beta}\Bigg)\!\Bigg).
\end{gather*}
If $(\eta_\gamma)_\alpha=0$, then $\operatorname{sgn}(L_{\gamma,A}(\sigma,\alpha)):=1$.
	\end{Lemma}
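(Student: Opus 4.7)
\medskip

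\noindent\textbf{Proof plan.} The two Clifford monomials $e_{T_{\gamma,A}}$ and $e_{T_{\gamma,A}^{\sigma,A}}$ involve the same multiset of generators (both contain $(\eta_\gamma)_\alpha$ copies of $e_\alpha$ for each $\alpha$, since the action $\pi_A$ only reshuffles the entries of each horizontal strip). Consequently, after reducing each product to canonical form using $\{e_\alpha,e_\beta\}=2\delta_{\alpha,\beta}$, both equal $\pm e^{\eta_\gamma}$ and the lemma reduces to a precise sign identity. The ``zero'' branch of the statement is not a genuine case distinction on the right-hand side of the identity in the Clifford algebra; it simply records that when $T_{\gamma,A}^{\sigma,A}\notin\mathbb{T}(\lambda_A,p)$ the prefactor on the left already vanishes. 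The plan is therefore to dispose of the non-column-distinct case first, and then verify a sign identity in the column-distinct case using the explicit inversion-counting formula \eqref{sec6_eq_sign-of-young-tableau}.

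\medskip

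\noindent\textbf{Step 1 (characterisation of column distinctness).} By construction of $T_{\gamma,A}$ and the action $\pi_A$, the coordinates of $\lambda_A$ whose $T_{\gamma,A}^{\sigma,A}$-entry equals $\alpha$ are obtained by selecting, within each horizontal strip $i$, the boxes $y_A(i,\sigma_i(s'))$ for $s'$ ranging over the $\alpha$-block $\{1+\sum_{\beta<\alpha}\gamma_{i,\beta},\dots,\sum_{\beta\leq\alpha}\gamma_{i,\beta}\}$. The tuple $L_{\gamma,A}(\sigma,\alpha)$ records the column indices $l_A(i,\cdot)$ of exactly these boxes. Thus the $\alpha$-entries of $T_{\gamma,A}^{\sigma,A}$ sit in distinct columns iff $L_{\gamma,A}(\sigma,\alpha)$ has distinct components, iff $\operatorname{sgn}(L_{\gamma,A}(\sigma,\alpha))\ne0$. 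Taking the product over $\alpha$ gives the equivalence between column distinctness of $T_{\gamma,A}^{\sigma,A}$ and non-vanishing of the sign prefactor; the ``otherwise'' branch follows at once.

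\medskip

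\noindent\textbf{Step 2 (sign identity in the column-distinct case).} Applying \eqref{sec6_eq_sign-of-young-tableau} to both $T_{\gamma,A}$ and $T_{\gamma,A}^{\sigma,A}$, the assertion reduces to
\begin{gather*}
(-1)^{N(T_{\gamma,A}^{\sigma,A})-N(T_{\gamma,A})}
=\operatorname{sgn}(\sigma)\prod_{\alpha=1}^p\operatorname{sgn}(L_{\gamma,A}(\sigma,\alpha))(-1)^{N_0(\gamma,A)}.
\end{gather*}
The inversion count $N(T)$ is a sum over ordered pairs $(y,y')\in\lambda_A\times\lambda_A$ with $y>y'$ (in the column-major reading order) and $t_y<t_{y'}$. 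Split the pairs contributing to $N(T_{\gamma,A}^{\sigma,A})-N(T_{\gamma,A})$ according to whether the two boxes lie in the same horizontal strip of $A$ or in distinct strips. Intra-strip pairs at strip $i$ are affected only by $\sigma_i$; summing these contributions over strips yields precisely $\operatorname{sgn}(\sigma)$, while the sorting of equal-value $\alpha$-positions within each strip's reading order supplies the factor $\prod_{\alpha}\operatorname{sgn}(L_{\gamma,A}(\sigma,\alpha))$. The remaining inter-strip contribution is exactly what $(-1)^{N_0(\gamma,A)}$ is designed to absorb: the double sum defining $N_0$ ranges over strips $i<i'$ and over an $\alpha$-block in strip $i$, and counts, via the function $G(\cdot,\cdot,\cdot)$, the number of $\alpha$-entries of strip $i'$ whose columns lie strictly between the moved and fixed positions.

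\medskip

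\noindent\textbf{Main obstacle.} The delicate bookkeeping is in Step~2, where one must correctly attribute each change of inversion to one of the three sign factors without double-counting. The reading order is column-major, but $\pi_A$ is strip-wise; consequently when entries of strip $i$ are permuted, inversions formed with entries of a different strip $i'$ change according to how many $i'$-strip boxes of each value $\alpha'$ lie in columns strictly between the source and target columns of a given $\alpha$-box. The function $G(l,k,l')$ in the definition of $N_0$ is engineered to count exactly these boxes using the row-length data $(\lambda_{A^{i'}})'_l-(\lambda_{A^{i'-1}})'_l$; verifying that this closed-form count agrees with the combinatorial difference of inversions (for all placements of $i<i'$ versus $i>i'$, i.e.\ the $N_1$ vs.\ $N_2$ asymmetry from the preceding lemma) is the technical core of the proof. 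Once this correspondence is laid out column by column and value by value, the three sign factors combine with the sign in the preceding lemma to give the required identity.
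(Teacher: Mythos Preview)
The paper explicitly omits the proof of this lemma (along with the other results in the appendix), calling them ``technical and not very enlightening''; there is no argument in the paper to compare yours against.

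Your overall strategy is the natural one and is presumably what the authors had in mind: Step~1 correctly disposes of the non-column-distinct branch, and reducing Step~2 to an inversion-count identity via~\eqref{sec6_eq_sign-of-young-tableau} is the right route. However, the attribution of the three sign factors in Step~2 is not accurate as written, and following it literally would lead you astray. The tuple $L_{\gamma,A}(\sigma,\alpha)$ lists the column positions of \emph{all} $\alpha$-entries of $T_{\gamma,A}^{\sigma,A}$, ranging over every horizontal strip $i$; hence $\prod_\alpha\operatorname{sgn}(L_{\gamma,A}(\sigma,\alpha))$ is not an intra-strip quantity. Conversely, $N_0(\gamma,A)$ is manifestly independent of $\sigma$, so it cannot by itself encode ``the remaining inter-strip contribution'', which certainly does depend on $\sigma$. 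A useful sanity check is to specialise the claimed identity to $\sigma=I$: this forces $(-1)^{N_0(\gamma,A)}=\prod_\alpha\operatorname{sgn}(L_{\gamma,A}(I,\alpha))$ whenever $T_{\gamma,A}$ is column distinct, confirming that $N_0$ is a fixed baseline offset rather than the carrier of the inter-strip change. The $\sigma$-dependent part of the inter-strip inversion difference must therefore be absorbed into the change of $\prod_\alpha\operatorname{sgn}(L_{\gamma,A}(\sigma,\alpha))$ relative to $\sigma=I$, together with the intra-strip equal-value corrections you allude to. To actually discharge the ``main obstacle'' you identify, you will need to track, for each fixed $\alpha$, how moving an $\alpha$-box within strip $i$ past boxes of strip $i'$ (both those with entry $\alpha$ and those with entry $\alpha'\neq\alpha$) alters the inversion parity, and match this against the corresponding change in $\operatorname{sgn}(L_{\gamma,A}(\cdot,\alpha))$; the function $G$ then enters only in the $\sigma$-independent baseline.
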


\begin{Proposition}
Let $A\in \mathbb{Y}_n(p)$ and~$\gamma\in M_{n,p}(\mathbb{N}_0)$ with $\mu_\gamma=\mu_A$, then
\begin{gather*}
c_A(\gamma)=\sum_{\sigma\in S_{\mu_A}}\prod_{\substack{1\leq i \leq n \\ 1\leq \alpha \leq p}}\frac{1}{\gamma_{i,\alpha}!}
	\operatorname{sgn}(\sigma)(-1)^{N(\gamma,A)}\operatorname{sgn}(L_A(\sigma,\alpha)),
	\end{gather*}
where $N(\gamma,A)=N_0(\gamma,A)+N_1(\gamma,A)+N_2(\gamma,A)$.
	\end{Proposition}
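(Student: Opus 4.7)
The plan is to obtain the formula by combining the two preceding lemmas of this appendix with Proposition~\ref{sec6_prop_coefficient-formula}. The substitution itself is essentially bookkeeping; the combinatorial content is carried entirely by the two lemmas, which rewrite the tableau sign $(-1)^{N(T^{\sigma,A}_{\gamma,A})}$ in closed form.

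First I would recall that, by Proposition~\ref{sec6_prop_coefficient-formula}, the quantity to be computed is
\begin{gather*}
c_A(\gamma)=\prod_{\substack{1\leq i \leq n \\ 1\leq \alpha \leq p}}\frac{1}{\gamma_{i,\alpha}!}\sum_{\sigma\in S_{\mu_A}} f_{\gamma,A}(\sigma),
\end{gather*}
where $f_{\gamma,A}(\sigma)=(-1)^{N(T^{\sigma,A}_{\gamma,A})}$ when $T^{\sigma,A}_{\gamma,A}\in\mathbb{T}(\lambda_A,p)$ and is zero otherwise. Thus the whole task reduces to proving the identity
\begin{gather*}
f_{\gamma,A}(\sigma)=\operatorname{sgn}(\sigma)(-1)^{N(\gamma,A)}\prod_{\alpha=1}^p\operatorname{sgn}\bigl(L_{\gamma,A}(\sigma,\alpha)\bigr),
\end{gather*}
for every $\sigma\in S_{\mu_A}$, with $N(\gamma,A)=N_0+N_1+N_2$.

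Second, I would chain the two lemmas. The first lemma gives $e_{T_{\gamma,A}}=(-1)^{N_1(\gamma,A)+N_2(\gamma,A)}e^{\eta_\gamma}$; the second then yields
\begin{gather*}
e_{T^{\sigma,A}_{\gamma,A}}=\operatorname{sgn}(\sigma)(-1)^{N_0(\gamma,A)}\prod_{\alpha=1}^p\operatorname{sgn}\bigl(L_{\gamma,A}(\sigma,\alpha)\bigr)\,e_{T_{\gamma,A}} =\operatorname{sgn}(\sigma)(-1)^{N(\gamma,A)}\prod_{\alpha=1}^p\operatorname{sgn}\bigl(L_{\gamma,A}(\sigma,\alpha)\bigr)\,e^{\eta_\gamma},
\end{gather*}
with the convention that the second lemma gives $0$ when $T^{\sigma,A}_{\gamma,A}\notin\mathbb{T}(\lambda_A,p)$. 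On the other hand, permuting entries of $T_{\gamma,A}$ within each horizontal strip of $A$ does not alter the multiset of column labels assigned to any fixed value $i$, so $\gamma_{A,T^{\sigma,A}_{\gamma,A}}=\gamma$ and hence $x_{A,T^{\sigma,A}_{\gamma,A}}=x^\gamma$ (as a commutative polynomial). The defining relation $x_{A,T}e_T=(-1)^{N(T)}x^{\gamma_{A,T}}e^{\eta_{\gamma_{A,T}}}$ then forces $e_{T^{\sigma,A}_{\gamma,A}}=(-1)^{N(T^{\sigma,A}_{\gamma,A})}e^{\eta_\gamma}$ whenever this tableau is column distinct.

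Third, comparing the two expressions for $e_{T^{\sigma,A}_{\gamma,A}}$ gives the desired identity for $\sigma$ with $T^{\sigma,A}_{\gamma,A}\in\mathbb{T}(\lambda_A,p)$. For the remaining $\sigma$ the right-hand side of the target identity also vanishes: by the definition of $L_{\gamma,A}(\sigma,\alpha)$, the coordinates collected there are exactly the column positions of the entries of $T^{\sigma,A}_{\gamma,A}$ that equal $\alpha$, so $\operatorname{sgn}(L_{\gamma,A}(\sigma,\alpha))=0$ precisely when $\alpha$ appears twice in some column of $T^{\sigma,A}_{\gamma,A}$, i.e.\ exactly when column-distinctness fails. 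Substituting the resulting closed form for $f_{\gamma,A}(\sigma)$ back into the formula from Proposition~\ref{sec6_prop_coefficient-formula} produces the claimed expression.

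The genuinely hard part of this appendix is not the proposition itself but the two lemmas: they require tracking sign contributions as one commutes Clifford generators $e_{t_{k,l}}$ past one another in the total ordering $(1,\lambda_1)<\cdots<(\lambda_1',1)$ used to define $e_T$, and the auxiliary function $G(l,k,l')$ is designed precisely to count how horizontal strips of $A$ corresponding to two different values $i<i'$ interleave when read in this order. Because the proposition assumes these lemmas, its proof is only the assembly sketched above; I would present it as a single short paragraph of substitution after restating Proposition~\ref{sec6_prop_coefficient-formula} and the two lemmas.
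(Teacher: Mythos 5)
Your proposal is correct and is exactly the derivation the paper intends: the proof of this proposition is omitted in the appendix as routine, precisely because it follows by substituting the two preceding lemmas (which, combined with the fact that $x_{A,T^{\sigma,A}_{\gamma,A}}=x^{\gamma}$ forces $e_{T^{\sigma,A}_{\gamma,A}}=(-1)^{N(T^{\sigma,A}_{\gamma,A})}e^{\eta_\gamma}$, give $f_{\gamma,A}(\sigma)=\operatorname{sgn}(\sigma)(-1)^{N(\gamma,A)}\prod_{\alpha=1}^p\operatorname{sgn}\bigl(L_{\gamma,A}(\sigma,\alpha)\bigr)$ in both the column-distinct and the vanishing cases) into Proposition~\ref{sec6_prop_coefficient-formula}. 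No gap to report.
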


\subsection*{Acknowledgements}
The authors were supported by the EOS Research Project 30889451. The editor and referees are thanked for their helpful reports.

\pdfbookmark[1]{References}{ref}
\LastPageEnding

\end{document}